\documentclass[]{article}
\usepackage{amsmath, amssymb}
\usepackage{enumerate}
\usepackage{enumitem}
\usepackage{graphicx}
\usepackage{float}
\usepackage{hyperref}
\usepackage{verbatim}
\usepackage{amssymb}
\usepackage{color}
\usepackage{amsthm}
\usepackage{amssymb}
\usepackage{verbatim}
\usepackage{geometry}
\usepackage{comment}
\usepackage{mathtools}
\usepackage{mathrsfs}
\usepackage{float}
\usepackage{subcaption}
\usepackage{tikz} 
\usetikzlibrary{angles, quotes}
\usetikzlibrary{shapes.geometric}% Permet d'utiliser tikz pour faire des graphiques

\def\R{{\mathbb R}}
\def\N{{\mathbb N}}

\def\C{{\mathbb C}}

\def\ds{\displaystyle}

\def\dt{{\rm d}t}

\def\dx{{\rm d}x}

\newcommand{\norm}[1]{\left\Vert#1\right\Vert}
\newcommand{\ud}{{\rm{d}}}
\newcommand{\Supp}{{\rm{Supp\,}}}
\renewcommand{\leq}{\leqslant}                 
\renewcommand{\geq}{\geqslant}
\renewcommand{\tilde}{\widetilde}

\renewcommand{\div}{\text{div\,}}
 \newtheorem{theorem}{Theorem}[section]
 \newtheorem{remark}[theorem]{Remark}
  \newtheorem{Assumption}{Assumption}[section]

 \newtheorem{corollary}[theorem]{Corollary}

 \newtheorem{proposition}[theorem]{Proposition}
 \newtheorem{definition}[theorem]{Definition}
 \numberwithin{equation}{section}

\title{
Observability of wave and plate equations with rough coefficients and interfaces: a multiplier approach
\footnote{Financial support. S.E. is partially supported by the ANR projects TRECOS ANR 20-CE40-0009, NumOpTes ANR-22-CE46-0005 and CHAT ANR-24-CE40-5470, and the MathAmSud project SCIPinPDEs 51749UC. Part of this work was done while S.E. was visiting Universidad Técnica Federico Santa Mar\'{\i}a, and S.E. acknowledges this university for providing a nice and fruitful working environment. A.M. is partially supported by Proyecto Interno USM 2025 PI-LIR-25-14. A. O. was partially funded by ANID-Fondecyt 1240200, 1231404, 
CMM FB210005 Basal-ANID, FONDAP/1523A0002, FONDEF IT23I0095, 
ECOS 240038 and DO ANID Technology Center DO210001.}}

\author{
	Sylvain \textsc{Ervedoza}\footnote{Institut de Mathématiques de Bordeaux, UMR 5251, Université de Bordeaux, CNRS, Bordeaux INP, F-33400 Talence, France,  
\texttt{sylvain.ervedoza@math.u-bordeaux.fr}}, 
	\and
	Alex \textsc{Imba} \footnote{Universidad San Francisco de Quito USFQ, Colegio de Ciencias e Ingenier\'ias, Campus Cumbayá, Casilla Postal 17-1200-841, Quito 170901, Ecuador, \texttt{aimba@usfq.edu.ec}} 
	\and
	Alberto \textsc{Mercado}\footnote{Departamento de Matemática, Universidad Técnica Federico Santa Mar\'{\i}a, Chile, \texttt{alberto.mercado@usm.cl}} 
	\and
	Axel \textsc{Osses}\footnote{Departamento de Ingeniería Matemática and Centro de Modelamiento Matemático (UMI 2807 CNRS), FCFM Universidad de Chile, Casilla 170/3 - Correo 3, Santiago, Chile, \texttt{axosses@dim.uchile.cl}} 
}

\begin{document}

\maketitle

\begin{abstract}
The goal of this article is to derive observability properties for wave and plate equations with rough coefficients in the principal part, including the case of interfaces, through a regularization process. Specifically, we demonstrate that if a singular coefficient can be approximated by a sequence of smooth coefficients corresponding to uniformly observable systems, then the observability inequalities can be passed to the limit. This allows us to derive observability properties for the limit system.
While this strategy is natural, we show that it can be used to establish new observability results in several settings, particularly in the presence of interfaces meeting the boundary, or multiple interfaces intersecting at a point,
under a suitable multiplier-type condition that imposes sign constraints on the jumps of the coefficients at the interfaces.
A key advantage of this approach is that it avoids the need for a detailed analysis of the regularity of solutions near the sets where the coefficients are singular.
\end{abstract}

\noindent\textbf{Keywords: }Regularization process, wave equations, plate equations, observability, multiplier method. 

\setcounter{tocdepth}{2}
\tableofcontents

%
% ------------------------------
%
\section{Introduction}
\label{Sec-Introduction}

\subsection{Main results}

In this article, we study observability inequalities for both the wave 
and plate equation in a bounded domain for a class of rough coefficients that includes the case of interfaces. 

For simplicity, we first consider the wave equation set in a bounded domain $\Omega \subset \R^d$ of class $\mathscr{C}^2$, given by:
\begin{equation}
	\label{Wave-eq-intro}
		\left\{
			\begin{array}{ll}
				\partial_{tt} y - \div (A \nabla y) = 0 & \text{ in } (0,T) \times \Omega, 
				\\
				y = 0 & \text{ on } (0,T) \times \partial \Omega, 
				\\ 
				(y, \partial_t y)\Big|_{t= 0} = (y_0, y_1) & \text{ in } \Omega,
			\end{array}
		\right.
\end{equation}
in which the main coefficient  $A \in L^\infty(\Omega; \R^{d\times d})$ is assumed to satisfy the following assumptions:
\begin{equation}
	\label{Ass-Symmetric-A}
		\text{$A$ is a symmetric matrix a.e. in $\Omega$ and } \exists \alpha>0, \text{s.t. } 
		\frac{1}{\alpha} I_{d\times d} \leq A(x) \leq \alpha I_{d\times d}, \quad \text{a.e.  } x\in\Omega. 
\end{equation}
(Here and in what follows, $I_{d\times d}$ denotes the identity matrix of size $d \times d$.)
In \eqref{Wave-eq-intro}, $y$ denotes the state, $(y_0, y_1)$ denotes the initial data, and $\div$ and $\nabla$ are, respectively,  the divergence and  gradient operators in $x$, as is customary.

Before going further, let us recall the following well-posedness result for system \eqref{Wave-eq-intro} (see \cite[Chapter III, Sections 8 and 9]{LionsMagenes}).
%% Thorem for WPP and def of E(t)
\begin{theorem}
	\label{Thm-WP}
	Let $T>0$ and assume that $A$ satisfies \eqref{Ass-Symmetric-A}. 
	
	Then for all $(y_0,y_1) \in H_0^1(\Omega)\times L^2(\Omega)$, there exists a unique  solution $y$
of system \eqref{Wave-eq-intro} in $\mathscr{C}^0([0,T]; H^1_0(\Omega)) \cap \mathscr{C}^1([0,T]; L^2(\Omega))$. Furthermore this solution satisfies that the energy, defined for $t \in [0,T]$ by  
	\begin{equation*}
		E(t):= \int_\Omega \left(   A \nabla y(t, \cdot) \cdot \nabla y(t, \cdot)  +  |\partial_t y(t, \cdot)|^2 \right) \ud x,
	\end{equation*}
	satisfies%
	\begin{equation*}
		\forall t \in [0,T], \quad E(t) = E(0) = \int_\Omega \left( A \nabla y_0(\cdot) \cdot \nabla y_0(\cdot) +  |y_1(\cdot)|^2  \right) \ud x.
	\end{equation*}
\end{theorem}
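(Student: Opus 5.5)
We will prove Theorem~\ref{Thm-WP} by spectral decomposition (equivalently, Galerkin plus semigroup theory), since the operator $-\div(A\nabla\cdot)$ with Dirichlet conditions is self-adjoint and coercive under \eqref{Ass-Symmetric-A}.

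The first step is to introduce the bilinear form $a(u,v)=\int_\Omega A\nabla u\cdot\nabla v\,\ud x$ on $H^1_0(\Omega)$. By \eqref{Ass-Symmetric-A} it is symmetric, continuous, and, by Poincaré's inequality, coercive. Hence it defines a norm on $H^1_0(\Omega)$ equivalent to the usual one. By Lax--Milgram, the associated operator $\mathcal A=-\div(A\nabla\cdot)$, with domain $D(\mathcal A)=\{u\in H^1_0(\Omega):\ \div(A\nabla u)\in L^2(\Omega)\}$, is a positive self-adjoint operator on $L^2(\Omega)$ with compact resolvent, by Rellich's theorem. We therefore obtain an orthonormal basis $(e_k)$ of $L^2(\Omega)$ of eigenfunctions, with eigenvalues $0<\lambda_k\to\infty$. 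Moreover, $(e_k/\sqrt{\lambda_k})$ is orthonormal in $H^1_0(\Omega)$ for the scalar product $a$. No regularity of $A$ beyond $L^\infty$ is needed here.

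The second step is to expand $y_0=\sum a_k e_k$ and $y_1=\sum b_k e_k$, and to define
\[
y(t)=\sum_k \Big(a_k\cos(\sqrt{\lambda_k}t)+\frac{b_k}{\sqrt{\lambda_k}}\sin(\sqrt{\lambda_k}t)\Big)e_k .
\]
Since $\sum\lambda_k|a_k|^2=a(y_0,y_0)<\infty$ and $\sum|b_k|^2<\infty$, the series converges uniformly in $t$ in $H^1_0(\Omega)$, and its time derivative converges uniformly in $L^2(\Omega)$. This gives $y\in\mathscr C^0([0,T];H^1_0)\cap\mathscr C^1([0,T];L^2)$. For each mode, $\lambda_k|c_k(t)|^2+|c_k'(t)|^2$ is constant in time, equal to $\lambda_k|a_k|^2+|b_k|^2$. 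Summing over $k$ and using Parseval in both scalar products gives $E(t)=E(0)$. The solution is understood in the weak sense: $\frac{\ud^2}{\ud t^2}(y,v)_{L^2}+a(y,v)=0$ in $\mathscr D'(0,T)$ for every $v\in H^1_0(\Omega)$, together with the initial conditions. This formulation is verified mode by mode, since each $c_k(t)$ solves $c_k''+\lambda_k c_k=0$.

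The main obstacle is uniqueness within this low-regularity class, because the energy identity cannot be derived directly by testing with $\partial_t y\notin H^1_0$. We would handle it by Lions--Magenes' device, or more simply by projecting. Let $y$ be a weak solution with zero data. Testing the weak formulation with $v=e_k$ shows that $c_k(t)=(y(t),e_k)$ satisfies $c_k''+\lambda_k c_k=0$ in $\mathscr D'(0,T)$ with $c_k(0)=c_k'(0)=0$. This is a distributional ODE with continuous $c_k$ and $c_k'$, so $c_k\equiv 0$. Completeness of $(e_k)$ then gives $y\equiv 0$. This settles uniqueness and hence the theorem.
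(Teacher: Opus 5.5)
Your proof is correct, but it takes a different route from the paper, which gives no argument of its own and cites Lions--Magenes (Chapter III, Sections 8--9). There the statement is a special case of the abstract variational theory for second-order evolution equations $u''+\mathcal{A}(t)u=f$ on a Hilbert triple $V\subset H\subset V'$, and it proceeds in three separate steps:
\begin{enumerate}
\item existence from a Galerkin approximation with a priori energy bounds, which by themselves only give $y\in L^\infty(0,T;V)$ and $\partial_t y\in L^\infty(0,T;H)$;
\item uniqueness from a time-primitive test function $v(t)=-\int_t^s y(\sigma)\,\ud\sigma$, used precisely because $\partial_t y$ is not an admissible test function;
\item the continuity $y\in\mathscr{C}^0([0,T];V)\cap\mathscr{C}^1([0,T];H)$ and the energy \emph{equality} from a separate time-regularization argument.
\end{enumerate}

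You replace all three steps by diagonalizing $\mathcal{A}=-\div(A\nabla\cdot)$. Since $A$ is symmetric and time-independent and $\mathcal{A}^{-1}$ is compact (bounded $\Omega$, Rellich), the problem decouples into scalar oscillators. Time continuity then follows from uniform convergence of the series, the energy is conserved mode by mode, and uniqueness reduces to $c_k''+\lambda_k c_k=0$ with zero data. This sidesteps the admissibility issue you identified and makes clear that no regularity of $A$ beyond $L^\infty$ is used. The cited theory, in exchange, does not rely on diagonalizing a fixed self-adjoint operator, so it also covers time-dependent coefficients and source terms.

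Two routine points deserve a line in a written version. First, Parseval in the $a$-scalar product uses the \emph{completeness} of $(e_k/\sqrt{\lambda_k})$ in $H^1_0(\Omega)$. This follows from $a(u,e_k)=\lambda_k(u,e_k)$ for $u\in H^1_0(\Omega)$. Second, if solutions are understood in $\mathscr{D}'((0,T)\times\Omega)$, testing with the generally non-smooth $e_k$ first requires extending the identity from $v\in\mathscr{C}^\infty_c(\Omega)$ to $v\in H^1_0(\Omega)$ by density. This is legitimate since $y\in\mathscr{C}^0([0,T];H^1_0(\Omega))$.
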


Our primary objective is to establish observability estimates for the wave equation \eqref{Wave-eq-intro}. To be more precise, we consider $\Gamma_0$ an open subset of $\partial \Omega$ and a time horizon $T>0$, and we investigate whether there exists a constant $C_A>0$ such that for all $(y_0, y_1) \in H^1_0(\Omega) \times L^2(\Omega)$, the solution $y$ of \eqref{Wave-eq-intro} satisfies
\begin{equation}
	\label{Observability}
	E(0) \leq C_{A}^2 \int_0^T \int_{\Gamma_0} |\partial_\nu y |^2 \, \ud\sigma\, \ud t.
\end{equation}
(Here and below, $\nu$ denotes the unitary exterior normal to the boundary $\partial \Omega$, and $\partial_\nu y$ denotes the normal derivative of $y$ at the boundary $\partial \Omega$.)

Note that the right hand side in \eqref{Observability} is well-defined for initial data in the energy space $H^1_0(\Omega) \times L^2(\Omega)$ when the coefficient $A$ is $W^{1,\infty}$ in a neighborhood of $\partial \Omega$, see Corollary\ref{Cor-Conv-Normal-Trace} and Definition \ref{Def-normal-derivative-y} (we also refer to \cite[Theorem 4.1]{LasieckaLionsTriggiani} for a similar statement, although the proof should slightly be adapted to deal with coefficients $A$ which belong to $W^{1, \infty}$ only in a neighborhood of $\partial \Omega$): this property is sometimes called admissibility (see \cite[Section 7.1]{TWBook}) or hidden regularity property (see, e.g., \cite[Chapter 1, Section 4.1]{lionsHUM}).

Whether or not the observability estimate \eqref{Observability} holds for solutions of \eqref{Wave-eq-intro}, this actually depends on $\Omega$, $\Gamma_0$, $T>0$ and the coefficient $A$. For example, in the $1$-d case, for $\Omega = (0,1)$, $\Gamma_0 = \{0\}$ and 
$A = 1 $, by writing the solution of the wave equation using the characteristics,  one can easily be convinced that the  observability holds if and only if $T \geq 2$. 

The study of observability properties is motivated by their central role in stabilization and control theory (see, e.g., \cite{lionsHUM}). In particular, the observability inequality \eqref{Observability} is equivalent to the controllability of the wave equation with boundary control. While we focus on observability for clarity, these results can be directly applied to derive control properties. Observability is also a key tool in inverse problems, where one seeks to recover information about the solution of a wave equation from boundary measurements (see, e.g., \cite{BaudouinDeBuhanErvedoza} and the references therein).

There are two main approaches to addressing observability for the wave equation.

The first, popularized by Lions \cite{lionsHUM} and Komornik \cite{KomornikMultipliers}, and originating from the seminal article \cite{LopFatHo}, is the multiplier method, 
which yields a sufficient condition for observability to hold,
 also called the $\Gamma$-condition or exit-condition. 
For $A = I_{d\times d}$, the condition requires the existence of $x_0 \in \R^d$ such that the set $\Gamma_0$ contains 
%the shadow from  of a light emanating from 
the shadow of $\Omega$ cast by a light source at $x_0$, i.e. 
\begin{equation*}
	\Gamma_0 \supset \{ x \in \partial \Omega, \, (x-x_0) \cdot \nu > 0 \},
\end{equation*}
and $T > 2 \sup_{x \in \Omega} \{| x -x_0| \}$. The article \cite{Komornik-1989} shows that such technique can be adapted to coefficients $A \in \mathscr{C}^1$ provided a suitable multiplier type condition is assumed. One can even go further and develop Carleman estimates under suitable multiplier type conditions, see for instance \cite{Fu-Yong-Zhang-2007, Liu-2013}, which in particular imply the observability property \eqref{Observability}. The remark that it can be used even for a density $\rho \in \mathscr{C}^0$ satisfying the  appropriate multiplier condition has been done only recently in the work \cite{Dehman-Erv-2016}, see the discussion later.

The second approach is based on microlocal analysis and the propagation of singularities: following  the work  \cite{Rauch-Taylor-1974} by Rauch and Taylor, Bardos, Lebeau and Rauch derived in \cite{BLR-89,BLR-92} sufficient conditions for the observability \eqref{Observability} of waves for $A \in \mathscr{C}^\infty$, which were  proven to be necessary and sufficient in \cite{BurqGerard}. These necessary and sufficient conditions can be roughly stated as follows: all rays of Geometric Optics (with respect to the metric $A$) starting in $\overline\Omega$ at $t=0$ should meet the observation set $\Gamma_0$ at  a non-diffractive point before time $T$. 
This has later been adapted to deal with a metric $A \in \mathscr{C}^2$ (and $\Omega$ of class $\mathscr{C}^3$) in \cite{Burq-1997}. Only recently this has been improved to metric $A \in \mathscr{C}^1$ in \cite{Burq-Dehman-LeRousseau} in compact manifolds, and in domains $\Omega$ of class $\mathscr{C}^2$ in the works \cite{Burq-Dehman-LeRousseau-I, Burq-Dehman-LeRousseau-II}. Note that, to be precise, the rays of Geometric Optics are the space-time projection of the generalized bicharacteristics of Melrose-Sj\"ostrand, which solves an ODE in the space frequency domain whose coefficient involve $\nabla A$. In particular, for $A \in \mathscr{C}^2$, the generalized bicharacteristics rays are uniquely defined using the classical Cauchy-Lipschitz theorem, while no uniqueness of the generalized bicharacteristics rays is known in the case of a coefficient $A \in \mathscr{C}^1$ (only the existence of the generalized bicharacteristics is known using the Cauchy-Peano theorem).

The presence of interfaces corresponds to coefficients $A$ that are piecewise smooth, the interfaces being the discontinuities of $A$.  To our knowledge, all results so far dealing with interfaces require the interfaces to be closed hypersurfaces that do not meet one another nor the boundary and have some regularity, typically $\mathscr{C}^1$. This is for instance the case in the classical work  \cite[Chapter 6]{lionsHUM}, or in the recent work \cite{BIMO} (see also the references therein) under the condition that the interface is $\mathscr{C}^3$, both of which are based on some multiplier type arguments (note that \cite{BIMO} goes further by proving a Carleman type estimate). A microlocal approach was also developed in \cite{Gagnon-2023}, but under a similar condition that the interface does not meet the boundary and is $\mathscr{C}^3$.

We aim to derive observability results for coefficients $A \in L^\infty$. To achieve this, we employ a regularization process, constructing a sequence of regularized problems for which uniform observability estimates can be derived. We then pass these estimates to the limit.

The challenge lies in identifying conditions that guarantee uniform observability estimates. To address this, we impose weak multiplier-type conditions on the coefficients. While a similar approach was developed in \cite{Dehman-Erv-2016} for the wave equation  $\rho \partial_{tt} y - \Delta y = 0$ with $\rho \in \mathscr{C}^0$, extending it to the general case of principal coefficients in $L^\infty$ requires additional work, which is the focus of our analysis.

To properly state our main results, we assume the following conditions on the function $A$:
%%% Begin Hypothesis for a
\begin{Assumption}
	\label{Assumption1}
	There exists a smooth domain $\Omega_0$  containing $\overline\Omega$ such that the function  $A \in L^\infty(\Omega; \R^{d\times d})$ can be extended as a function $A \in L^\infty(\Omega_0; \R^{d\times d})$ (still denoted the same for simplicity) satisfying:
	\begin{itemize}
		\item $A$ is a symmetric matrix a.e. in $\Omega_0$ and  there exists $\alpha>0$ such that 
		\begin{equation}
			\label{Positivity-A}
			\frac{1}{\alpha} I_{d\times d} \leq A(x) \leq \alpha I_{d\times d}, \quad \text{a.e. in } \Omega_0. 
		\end{equation}
		
		\item There exists $ 0\leq \beta <2 $ such that 
		\begin{equation}
			\label{ObsCond-a}	
			\forall \xi \in \R^d, \quad
			((x\cdot \nabla A - \beta A ) \xi) \cdot \xi    \leq 0
			\text{ in the sense of } \mathscr{D}'(\Omega_0), 
		\end{equation} 
		in which, setting $A = (a_{i,j})_{i,j \in \{1, \cdots, d\}}$ the matrix $x \cdot \nabla A$ denotes the symmetric matrix $(x \cdot \nabla a_{i,j})_{i,j \in \{1, \cdots, d\}}$. 
		
		\item Further, we assume  that 
		\begin{equation}
			\label{Cond-on-0-a}
			\left\{
				\begin{array}{l}
					0 \notin \overline{\Omega_0}, 
					\\
					\hbox{or}
					\\
					\hbox{$A$ is Lipschitz in a neighborhood of $0$.}
				\end{array}
			\right.  
		\end{equation}
		
%		\item There exists a neighborhood $\mathcal{V}$ of $\Gamma$ in $\Omega_0$ such that $A \in W^{1, \infty}(\mathcal{V})$.
	\end{itemize}
\end{Assumption}
	%
%%%  END hypothesis for a

One of the main results of this work is the following:

%%  Main result. It was in section 3
\begin{theorem} \label{MainThmOBS}
    	Let $\Omega$ be a non-empty bounded open subset of $\R^d$ of class $\mathscr{C}^2$
        and let $A\in L^\infty(\Omega; \R^{d\times d})$ satisfy Assumption \eqref{Assumption1}.
        
	Set 
	\begin{equation}
		\label{Multiplier-Set-Gamma}
		\Gamma_0 = \{ x \in \partial \Omega, \, x \cdot \nu > 0  \},
	\end{equation}
	and assume that there exists a neighborhood $\mathcal{V}$ of $\Gamma_0$ in $\Omega_0$ such that $A \in W^{1, \infty}(\mathcal{V}; \R^{d\times d})$.
    %of Proposition \ref{Prop-Approx-a}. 

	Then for any $T$ satisfying 
	\begin{equation}
		\label{Cond-T-Wave}
		T > \frac{4 \sqrt{\alpha} \sup_{x \in \Omega} |x|}{2 -  \beta},
	\end{equation}
	there exists $C > 0$ such that  any solution $y$ of equation \eqref{Wave-eq-intro} with initial data $(y_0,y_1) \in H_0^1(\Omega) \times L^2(\Omega)$ satisfies 
	\begin{equation}\label{Obs-Boundary}
		\int_\Omega \left(  A \nabla y_0 \cdot \nabla y_0 + |y_1|^2 \right) \ud x 
	\leq 
		 C^2 \int_0^T\int_{\Gamma_0} \left| \partial_\nu y\right|^2 
	 	  (x \cdot \nu)\, \ud \sigma\, \ud t.  
	\end{equation}
\end{theorem}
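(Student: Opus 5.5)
The strategy is to regularize: approximate $A$ by smooth coefficients $A_n$, prove observability for each $A_n$ with constants independent of $n$ by the multiplier method, and pass to the limit. Throughout, $A_n$ must keep the structural conditions of Assumption \ref{Assumption1} uniformly.

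\textbf{Step 1 (approximation).} I would take $A_n = A * \rho_n$ restricted to $\Omega$, with $\rho_n$ a mollifier of radius $1/n$. Then $A_n$ is smooth and symmetric, and since the $A_n$ are convex averages of $A$, they satisfy \eqref{Positivity-A} with the same $\alpha$. The multiplier condition \eqref{ObsCond-a} is not translation invariant, because $x\cdot\nabla$ does not commute with convolution. To repair this I would use the commutator identity
\[
x\cdot\nabla (A*\rho_n) = (x\cdot\nabla A)*\rho_n + A*\big(\div(y\rho_n(y))\big),
\]
whose second term is $O(1)$ rather than small. A better choice is a dilation-compatible mollification, averaging $A(\lambda x)$ over $\lambda$ near $1$ and then convolving. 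Alternatively one can accept a condition with some $\beta_n \to \beta$, plus an $o(1)$ error away from $0$, keeping $\beta_n<2$.

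Near $0$, \eqref{Cond-on-0-a} makes $A$ Lipschitz, so the error there is controlled. Near $\Gamma_0$, $A\in W^{1,\infty}(\mathcal V)$, so $A_n\to A$ uniformly with $\nabla A_n$ bounded there.

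\textbf{Step 2 (uniform multiplier estimate).} For smooth $A_n$ and solutions $y_n$, I would use the multiplier $x\cdot\nabla y_n + \frac{2-\beta_n}{2}\cdot c\, y_n$, with the constant $c$ tuned as in Komornik's weighted argument. Integrating by parts gives
\[
\frac{2-\beta_n}{2}\int_0^T E_n\,\ud t + \text{(nonpositive term from }(x\cdot\nabla A_n-\beta_nA_n)\nabla y_n\cdot\nabla y_n) + \big[X(t)\big]_0^T = \frac12\int_0^T\!\!\int_{\partial\Omega}(A_n\nu\cdot\nu)\,|\partial_\nu y_n|^2\,(x\cdot\nu).
\]
The boundary term is bounded above by its part on $\Gamma_0$, since $x\cdot\nu\le 0$ off $\Gamma_0$. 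The term $|X(t)|$ is at most $2\sqrt\alpha\,\sup|x|\,E_n$, using $|x\cdot\nabla y|\le |x|\sqrt{\alpha}\,|A^{1/2}\nabla y|$ and a Poincaré-free optimization of the $y$ term. This yields
\[
\big((2-\beta_n)T - 4\sqrt\alpha\sup|x|\big)E_n(0) \le C\int_0^T\!\!\int_{\Gamma_0}|\partial_\nu y_n|^2(x\cdot\nu),
\]
which gives uniform observability under \eqref{Cond-T-Wave} for large $n$.

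\textbf{Step 3 (limit).} Fix the data $(y_0,y_1)$ and let $y_n$ solve the problem with $A_n$. The energies are uniformly bounded, so weak compactness together with $A_n\to A$ a.e. and boundedly shows $y_n\to y$. The energy convergence $E_n(0)\to E(0)$ is immediate at $t=0$. The normal traces converge on $\Gamma_0$, in $L^2$ or at least weakly; this is where Corollary \ref{Cor-Conv-Normal-Trace} is invoked, using $W^{1,\infty}$ control in $\mathcal V$. Lower semicontinuity is in the wrong direction for the right-hand side, so strong convergence of the traces is needed. I would obtain it from strong $\mathscr{C}^0(H^1_0)$ convergence of $y_n$, which follows from convergence of the energies $E_n(t)=E_n(0)\to E(0)=E(t)$ combined with weak convergence, via the standard Murat-type energy argument. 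A local hidden regularity estimate near $\Gamma_0$ then converts this into trace convergence.

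\textbf{Main obstacle.} I expect Step 1 to be the main obstacle: building smooth approximations that preserve the distributional sign condition \eqref{ObsCond-a} up to vanishing errors, handling the non-commutation of $x\cdot\nabla$ with mollification. I would try multiplicative (dilation) mollification first, $A_n(x)=\int A(e^{s}x)\,\theta_n(s)\,\ud s$, which commutes exactly with $x\cdot\nabla$. This would be followed by a small standard convolution whose error is absorbed using the Lipschitz hypothesis near $0$ and the slack $\beta<2$.
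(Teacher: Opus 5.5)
Your overall plan matches the paper's. Step~2 is essentially Proposition~\ref{ObsC1}: the choice $\lambda=\frac{d-1}{2}+\frac{\beta}{4}$ together with $\|x\cdot\nabla z+\lambda z\|_{L^2}\le\|x\cdot\nabla z\|_{L^2}$ for $0\le\lambda\le d$ is the ``Poincaré-free'' step. Step~3 is Theorem~\ref{Thm-Regularization} plus Corollary~\ref{Cor-Conv-Normal-Trace}.

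The genuine gap is Step~1. It is the heart of the proof, and you give no construction that works. The uniform multiplier estimate needs $((x\cdot\nabla A_n-\beta_nA_n)\xi)\cdot\xi\le 0$ \emph{pointwise} on $\overline\Omega$, with $\beta_n\to\beta$ if you want every $T$ satisfying \eqref{Cond-T-Wave}. An error that is small only in $L^1$ or a.e.\ is useless here, since $|\nabla y_n|^2$ is merely bounded in $L^1$ and may concentrate on the interfaces.

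Neither of your fixes delivers this.
\begin{itemize}
\item The dilation average $\int A(e^sx)\theta_n(s)\,\ud s$ commutes with $x\cdot\nabla$, but it only smooths along rays. Take $d=2$, $0\notin\overline{\Omega_0}$, and $A=a(\theta)I_{d\times d}$ with $a$ a step function of the polar angle. Then $x\cdot\nabla A=0$, so \eqref{ObsCond-a} holds with $\beta=0$, and the dilation average returns $A$ unchanged.
\item The standard convolution you apply next produces exactly the commutator $A*\div(z\rho_n)$ you wrote down. Since $\div(z\rho_n)$ is a rescaling of a fixed zero-mean profile, this term does not tend to $0$. In the example, at signed distance $x_2$ from the discontinuity ray, $x\cdot\nabla(a*\rho_n)\approx J\,(nx_2)\,\phi(nx_2)$, where $J$ is the jump and $\phi$ the one-dimensional marginal of $\rho$. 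This is of the order of $J$ and positive on one side.
\end{itemize}
So the claimed ``$o(1)$ error away from $0$'' is false, and the Lipschitz hypothesis at $0$ plays no role in controlling it. Absorbing this fixed-size error into the slack $2-\beta$ gives at best a condition with some fixed $\beta'>\beta$ (if $\beta'<2$ at all). That yields a worse time than \eqref{Cond-T-Wave}.

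The missing idea is the paper's mollification at a radius proportional to $|x|$:
$$A_\varepsilon(x)=(\varepsilon|x|)^{-d}\int A(y)\,\eta\big(\tfrac{x-y}{\varepsilon|x|}\big)\,\ud y .$$
Its kernel $K_x(y)$ is dilation covariant, $K_{\lambda x}(\lambda y)=\lambda^{-d}K_x(y)$. Differentiating at $\lambda=1$ gives $x\cdot\nabla_xK_x=-\div_y(yK_x)$, hence $x\cdot\nabla A_\varepsilon(x)=\langle x\cdot\nabla A,K_x\rangle$ exactly, with no commutator. Testing \eqref{Eq-Formulation-Weak-Multiplier} with $\varphi=K_x\ge 0$ then yields the pointwise condition with the same $\beta$.

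This is also where \eqref{Cond-on-0-a} genuinely enters. The radius $\varepsilon|x|$ degenerates at $x=0$, and Lipschitz continuity of $A$ there is what keeps $A_\varepsilon$ Lipschitz.

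Your translation-invariant route could alternatively be closed by using radial kernels with $\|(\div(z\rho))_+\|_{L^1}$ arbitrarily small, for instance with $|z|^d\rho(z)$ nearly constant over scales $[\delta,\frac12]$ and $\delta\to 0$. This gives $x\cdot\nabla A_n\le(\beta+\alpha^2\|(\div(z\rho))_+\|_{L^1})A_n$ in the sense of quadratic forms. No such choice appears in your proposal, however, and with a generic mollifier the argument fails as explained above.
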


Condition \eqref{ObsCond-a} is a weak multiplier type condition for the function $A$. Unlike  classical settings, we do not assume that $A$ is $\mathscr{C}^1$ nor that \eqref{ObsCond-a} holds pointwise. Instead, we assume that this latter condition holds in the sense of distributions, that is, for all $\varphi \in \mathscr{C}^\infty_c(\Omega_0; \R_+)$ , for all $\xi \in \R^d$, 
\begin{equation}
	\label{Eq-Formulation-Weak-Multiplier}
	 \int_{\Omega_0} \left(A \xi \cdot \xi\right) (\div ( x \varphi) + \beta \varphi) \, \ud x \geq 0. 
\end{equation}
This condition thus makes sense even for $A \in L^\infty(\Omega_0; \R^{d\times d})$, and allows to directly handle interfaces. 

For instance, let $\Omega_1$ be a non-empty open subset of $\Omega$ with Lipschitz boundary and $\overline\Omega_1 \subset \Omega$, and $\Omega_0$ be an open set which contains $\overline\Omega$. If $A$ is of the form 
 $$
 A(x) = \begin{cases}
     a_1(x) I_{d\times d}, \, \text{ for } x \in \Omega_1 \\
     a_2(x) I_{d\times d} , \, \text{ for }x \in  \Omega_0 \setminus \overline{\Omega_1},
 \end{cases}
 $$
 with $a_1 \in W^{1,1}(\Omega_1)$ and $a_2 \in W^{1,1}(\Omega_0 \setminus\overline\Omega_1)$, $A$ satisfies the multiplier condition \eqref{ObsCond-a} provided 
 $$
 	x \cdot \nabla a_1 \leq \beta a_1 \text{ in } \Omega_1, 
	\quad 
	x \cdot \nabla a_2 \leq \beta a_2 \text{ in } \Omega_0 \setminus \overline\Omega_1,
	\quad \text{ and } \quad
	(a_2 - a_1) x \cdot \nu_1 \leq 0\quad \text{a.e. on } \partial \Omega_1.
 $$
 where $\nu_1$ is the outer unit normal of $\partial \Omega_1$. This is obtained using the classical Green formula, which is valid in $\Omega_1$ and in $\Omega_0 \setminus \overline\Omega_1$ due to the fact that $\partial \Omega_1$ is assumed to be Lipschitz, see \cite[Chapter 3, Section 1, Theorem 1.1]{Necas-1967} (here, the normal is defined almost everywhere).
 
In particular, if $a_1$ and $a_2$ are positive constants and $a_2 < a_1$, we recover the condition $  x\cdot \vec \nu_1 \geq 0 $, corresponding to the property of $\Omega_1$ being star-shaped with respect to the origin, and we recover the setting considered in \cite[Chapter 6]{lionsHUM}. %and represented in the next figure.
%\begin{figure}[!h]
%	\begin{center}
%		\begin{tikzpicture}[scale=0.8]
%			% Usamos "smooth cycle" para que la curva sea cerrada y suave
%			\draw[thin] plot [smooth cycle, tension=0.7] 
%			coordinates {
%				(-3*0.5, -2*0.5) (-3.5*0.5, 1*0.5) (-1*0.5, 3*0.5) 
%				(2*0.5, 2.5*0.5) (3.5*0.5, 0.5*0.5) (2.5*0.5, -2.5*0.5) (0, -3*0.5)
%			};
%			% Etiqueta del dominio grande
%			%\node at (2, 2) {$\Omega_0$};
%			\node[text width=.25cm] at (0,-.55) {$a_1$};
%			\draw[thick, dotted, fill=white] plot [smooth cycle, samples=100, domain=0:360] 
%			(\x : {0.8 + 0.2*cos(6*\x)}); % '6' es el número de ondulaciones
%			%\node[text width=.25cm] at (0,-.45) {$a_1$};
%			\node[text width=.25cm] at (0,.25) {$a_1$};
%			\node[text width=.25cm] at (0,-1.15) {$a_2$};
%			%\draw (0,0) circle (2.2cm);
%			%\node[text width=.25cm] at (-1.6,0) {$\partial \Omega_1$};
%			\node[text width=.25cm] at (0,1.8) {$\tiny{\Omega}$};
%			%\node[text width=.25cm] at (-2.52,0) {$\partial \Omega$};
%			\draw[->](1,0) -- (1.4,0) node[midway,above] {{\tiny{$\vec{\nu}_1$}}};
%			%\draw[->](2,0) -- (2.6,0) node[midway,above] {$\nu$};
%		\end{tikzpicture}
%	\end{center}
%	\caption{Assumption \eqref{ObsCond-a}  holds if $a_1$ and $a_2$  are positive constants with $a_2 < a_1$ and $\Omega_1$(dotted) star shaped with respect to the origin.}\label{lions-star-shaped.}
%\end{figure}
When $a_1$ and $a_2$ are variable (and ordered), we recover conditions similar to those obtained in \cite{BIMO} for these coefficients; however, in this reference some additional assumptions are imposed on $\Omega_1$, namely that it has a $\mathscr{C}^3$ boundary and that it is convex.

%When $a_1$ and $a_2$ are variable (and ordered), we recover conditions similar to the ones in \cite{BIMO}, which further imposes that $\Omega_1$ has a $\mathscr{C}^3$ boundary and that it is convex.

\begin{figure}[H]
	\centering
	% --- PRIMERA FILA: DOS IMÁGENES ---
	\begin{subfigure}[b]{0.45\textwidth}
		\centering
		\tikzset{every picture/.style={line width=0.75pt}} 
		\begin{tikzpicture}[x=0.75pt,y=0.75pt,yscale=-1,xscale=1, shift={(-430,-420)}]
			%Shape: Polygon Curved
			\draw (537.81,403.15) .. controls (551.58,437.33) and (517.99,467.14) .. (490.39,452.44) .. controls (462.79,437.73) and (462.93,482.65) .. (433.04,479.47) .. controls (403.14,476.29) and (391.73,404.83) .. (428.45,387.25) .. controls (465.17,369.66) and (524.04,368.96) .. (537.81,403.15) -- cycle ;
			\draw [dash pattern={on 0.84pt off 2.51pt}] (442.21,383.27) .. controls (484.55,413.26) and (465.35,454.03) .. (449.86,475.49) ;
			\draw [dash pattern={on 4.5pt off 4.5pt}] (340.5,491.39) -- (415.65,397.61) ;
			\draw [dash pattern={on 4.5pt off 4.5pt}] (340.5,491.39) -- (439.15,479.47) ;
			\draw [dash pattern={on 0.84pt off 2.51pt}] (457.51,397.58) .. controls (496.51,415.87) and (487.33,419.05) .. (465.92,446.87) ;
			\draw (438.93,424.81) node [font=\scriptsize, red!80!black, rotate=-358] {$a_{1}$};
            \draw (438.93,444.81) node [font=\scriptsize, red!80!black, rotate=-358] {$\Omega_{1}$};
			\draw (475.68,415.83) node [font=\scriptsize, red!80!black] {$a_{2}$};
			\draw (475.68,425.83) node [font=\scriptsize, red!80!black] {$\Omega_{2}$};
            \draw (514.68,368.31) node [anchor=north west, font=\scriptsize] {$\Omega $};
			\draw (323.1,482.39) node [anchor=north west, font=\scriptsize] {${0}$};
			\draw (508.57,412.42) node [font=\scriptsize, red!80!black] {$a_{3}$};
            \draw (508.57,432.42) node [font=\scriptsize, red!80!black] {$\Omega_{3}$};
		\end{tikzpicture}
		\caption{Interfaces meeting the boundary and with a cusp.}
	\end{subfigure}
	\hfill
	\begin{subfigure}[b]{0.45\textwidth}
		\centering
		\tikzset{every picture/.style={line width=0.75pt}} 
		\begin{tikzpicture}[x=0.75pt,y=0.75pt,yscale=-1,xscale=1, shift={(-160,-270)}]
			\draw (268.05,243.11) .. controls (290.4,259.86) and (297.4,321.86) .. (254.81,321.24) .. controls (212.22,320.62) and (223.65,313.56) .. (177.35,320.57) .. controls (131.05,327.57) and (136.65,251.1) .. (168.44,233.64) .. controls (200.23,216.18) and (245.71,226.36) .. (268.05,243.11) -- cycle ;
			\draw [dash pattern={on 0.84pt off 2.51pt}] (180.66,228.12) .. controls (217.31,257.88) and (225.26,288.88) .. (190.01,320.45) ;
			\draw [dash pattern={on 4.5pt off 4.5pt}] (57.52,284.15) -- (164.74,320.32) ;
			\draw [dash pattern={on 4.5pt off 4.5pt}] (57.52,284.15) -- (180.66,228.12) ;
			\draw [dash pattern={on 0.84pt off 2.51pt}] (197.21,225.75) .. controls (217.07,234.43) and (227,246.27) .. (212.44,267.58) ;
			\draw [dash pattern={on 0.84pt off 2.51pt}] (195.23,243.31) -- (193.65,316.56) ;
			\draw [dash pattern={on 0.84pt off 2.51pt}] (246.87,232.06) .. controls (278.64,254.95) and (261.43,273.89) .. (252.82,280.99) ;
			\draw [dash pattern={on 0.84pt off 2.51pt}] (252.82,280.99) .. controls (265.4,292.04) and (259.45,310.98) .. (250.84,318.08) ;
			\draw [dash pattern={on 0.84pt off 2.51pt}] (246.87,232.06) .. controls (233.65,249.56) and (261.43,273.89) .. (252.82,280.99) ;
			\draw [dash pattern={on 0.84pt off 2.51pt}] (222.65,317.56) .. controls (237.65,311.56) and (250.84,301.51) .. (252.82,280.99) ;
			\draw [dash pattern={on 0.84pt off 2.51pt}] (211.65,317.56) .. controls (220.92,291.52) and (243.56,287.31) .. (252.82,280.99) ;
			\draw (169.88,266.98) node [font=\tiny, red!70!black] {$a_{1}$};
			\draw (204,283.88) node [font=\tiny, red!70!black] {$a_{2}$};
			\draw (39.32,269.73) node [anchor=north west, font=\tiny] {${0}$};
			\draw (212.28,246.47) node [font=\tiny, red!80!black] {$a_{3}$};
			\draw (231.48,269.35) node [font=\tiny, red!70!black] {$a_{4}$};
			\draw (230.76,304.29) node [font=\tiny, red!70!black] {$a_{5}$};
			\draw (253,304.08) node [font=\tiny, red!70!black] {$a_{6}$};
			\draw (259,261.46) node [font=\tiny, red!70!black] {$a_{7}$};
			\draw (269.22,283.56) node [font=\tiny, red!70!black] {$a_{8}$};
			\draw (268.68,226.31) node [anchor=north west, font=\scriptsize] {$\Omega $};
		\end{tikzpicture}
		\caption{Multi-interfaces case and triple point}
	\end{subfigure}

	\vspace{0.5cm} % Espacio vertical entre filas

	% --- SEGUNDA FILA: TERCERA IMAGEN CENTRADA ---
	\begin{subfigure}[b]{0.45\textwidth}
		\centering
		% Aquí pones el código de tu tercera imagen TikZ

\tikzset{every picture/.style={line width=0.7pt}} %set default line width to 0.75pt        

\begin{tikzpicture}[x=0.75pt,y=0.75pt,yscale=-1,xscale=1]
%uncomment if require: \path (0,546); %set diagram left start at 0, and has height of 546

%Shape: Polygon Curved [id:ds35179030314133297] 
\draw   (157,401.75) .. controls (184,391.75) and (251,379.75) .. (254,403.75) .. controls (257,427.75) and (217,431.75) .. (247,470.75) .. controls (277,509.75) and (119,500.75) .. (99,470.75) .. controls (79,440.75) and (130,411.75) .. (157,401.75) -- cycle ;
%Curve Lines [id:da49208371498184134] 
\draw  [dash pattern={on 0.84pt off 2.51pt}]  (221,476.75) .. controls (208,460.75) and (202,428.75) .. (226,405.75) ;
%Straight Lines [id:da5843614019465139] 
\draw    (319,374.75) -- (320,505.75) ;
%Straight Lines [id:da37510062242619513] 
\draw    (226,405.75) -- (320,444.25) ;
%Straight Lines [id:da2762614068954379] 
\draw    (221,476.75) -- (320,444.25) ;
%Shape: Arc [id:dp9368384563184986] 
\draw  [draw opacity=0] (287.57,431.74) .. controls (292.67,418.49) and (305.53,409.48) .. (320,409.5) -- (320,444.25) -- cycle ; \draw   (287.57,431.74) .. controls (292.67,418.49) and (305.53,409.48) .. (320,409.5) ;  
%Shape: Arc [id:dp2182464384767021] 
\draw  [draw opacity=0] (273.61,457.58) .. controls (272.4,453.34) and (271.75,448.87) .. (271.75,444.25) .. controls (271.75,417.93) and (292.82,396.54) .. (319.01,396.01) -- (320,444.25) -- cycle ; \draw   (273.61,457.58) .. controls (272.4,453.34) and (271.75,448.87) .. (271.75,444.25) .. controls (271.75,417.93) and (292.82,396.54) .. (319.01,396.01) ;  
%Straight Lines [id:da9244941888241871] 
\draw  [dash pattern={on 4.5pt off 4.5pt}]  (250,395.75) -- (320,444.25) ;
%Straight Lines [id:da4042433010304728] 
\draw  [dash pattern={on 4.5pt off 4.5pt}]  (244,489.75) -- (320,444.25) ;

% Text Node
\draw (76.98,428.41) node [anchor=north west][inner sep=0.75pt]    {$\Omega $};
% Text Node
\draw (317.86,339.75) node [anchor=north west][inner sep=0.75pt]    {${}$};
% Text Node
\draw (319,434.55) node [anchor=north west][inner sep=0.75pt]  [font=\scriptsize]  {$0$};
% Text Node
\draw (300,413.55) node [anchor=north west][inner sep=0.75pt]  [font=\tiny]  {$\theta _{1}$};
% Text Node
\draw (275,435.55) node [anchor=north west][inner sep=0.75pt]  [font=\tiny]  {$\theta _{2}$};
% Text Node
\draw (150.97,453.23) node  [font=\scriptsize,color={rgb, 255:red, 208; green, 2; blue, 27 }  ,opacity=1 ,rotate=-0.52]  {$ \begin{array}{l}
1\\
\end{array}$};
% Text Node
\draw (228.97,441.23) node  [font=\scriptsize,color={rgb, 255:red, 208; green, 2; blue, 27 }  ,opacity=1 ,rotate=-272.39]  {$ 
a_{_{\theta }( \theta )} +1$};

\end{tikzpicture}
\caption{ {\it Non-closed} interface: $a(r,\theta)=(2-\mathbf{1}_{r>1})a_\theta(\theta) +(1-a_\theta(\theta))$ with $a_\theta>0$ compactly supported in $[\theta_1, \theta_2]$ not necessarily smooth}
	\end{subfigure}
\hspace{1cm}
	\caption{Some examples including interfaces (dotted line) where Assumption \ref{ObsCond-a} holds, provided the coefficients $a_i$ are suitably ordered in Figures (a)-(b) with $a_i \leq a_j$ for $i > j$.}\label{Figures}
\end{figure}

It is also clear from the above example that in fact interpreting condition \eqref{ObsCond-a} relies on the validity of the Gauss-Green theorem in domains delimited by the interfaces. As we said, the Gauss-Green theorem in domains is valid when the boundary of the domain is assumed to be Lipschitz, see \cite[Chapter 3, Section 1, Theorem 1.1]{Necas-1967}, but in fact it also makes sense for vector fields $f$ in $\mathscr{C}^1_c(\R^d; \R^d)$ for domains $E$ having finite perimeter, allowing to write
$$
	\int_E \div (f ) \, \ud x = \int_{\partial_* E}  f \cdot \nu_E \ud \mathcal{H}^{d-1}, 
$$
see \cite[Theorem 5.16]{Evans-Gariepy}, where $\partial_* E$ is the measure theoretic boundary of $E$ (see  \cite[Definition 5.7]{Evans-Gariepy}), and $\nu_E$ is the measure theoretic unit outer normal to $E$ (see  \cite[Definition 5.6]{Evans-Gariepy}). In particular, this allows to consider domains with cusps when the functions are $\mathscr{C}^1$.

It thus appears clearly that Assumption \ref{Assumption1} encompasses many more examples, including interfaces that meet the boundary or that may intersect one another, see Figure \ref{Figures} for some illustrations. Figures \ref{Figures}-(a) and \ref{Figures}-(b) present cases in which the coefficients $A$ are piecewise constants, and equal to $a_i$ in domains $\Omega_i$ delimited by the dotted lines. In such cases Assumption \ref{ObsCond-a} holds, provided the coefficients $a_i$ are suitably ordered in Figures (a)-(b) with $a_i \leq a_j$ for $i > j$. Finally, Figure \ref{Figures}-(c) presents a case in which the interface is non-closed. 

A major advantage of our approach is that it avoids the need for detailed regularity analysis of solutions near the singular sets of the coefficients. Instead, we approximate solutions of the wave equation \eqref{Wave-eq-intro} with rough coefficients by solutions of the same equation with smooth coefficients. For the latter, all computations can be justified using classical density arguments, and the results are then passed to the limit.

The cornerstone of this approach is Theorem \ref{Thm-Regularization}, which proves the strong convergence of solutions in the energy space $H^1_0(\Omega) \times L^2(\Omega)$ under weak convergence of the coefficients,  namely $L^1$ and almost everywhere convergence (see \eqref{Convergence-a-n-a}).

\bigskip

When the observation is done in an open set $\omega$ which is a neighborhood of the domain $\Gamma_0$ in \eqref{Multiplier-Set-Gamma}, we have the following counterpart to Theorem \ref{MainThmOBS}:
\begin{theorem} \label{MainThmOBS-distributed}
    	Let $\Omega$ be a non-empty bounded open subset of $\R^d$ of class $\mathscr{C}^2$
        and     
	let $A\in L^\infty(\Omega; \R^{d\times d})$ satisfy Assumption \eqref{Assumption1}.
        Assume that $\omega$ is an open subset of $\Omega_0$ satisfying
	\begin{equation}
		\label{Multiplier-Condition-omega}
		\omega \supset \{ x \in \partial \Omega, \, x \cdot \nu > 0 \}.
	\end{equation}

	Then for any $T$ satisfying \eqref{Cond-T-Wave}, 
	there exists $C > 0$ such that  any solution $y$ of equation \eqref{Wave-eq-intro} with initial data $(y_0,y_1) \in H_0^1(\Omega) \times L^2(\Omega)$ satisfies 
	\begin{equation}\label{Obs-Distributed}
		\int_\Omega \left( A \nabla y_0 \cdot \nabla y_0 + |y_1|^2 \right) \ud x 
	\leq 
		 C \int_0^T\int_{\omega \cap \Omega} \left| \partial_t y\right|^2 
	 	  \, \ud x\, \ud t.  
	\end{equation}
	
\end{theorem}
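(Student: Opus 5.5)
The plan is to deduce Theorem \ref{MainThmOBS-distributed} from Theorem \ref{MainThmOBS}, using the same regularization strategy that underlies it. One point needs care first: here no $W^{1,\infty}$ regularity of $A$ near $\Gamma_0$ is assumed, so Theorem \ref{MainThmOBS} cannot be applied directly to $A$. I would therefore work at the level of smooth approximations. Take a sequence $A_n \in \mathscr{C}^\infty(\overline{\Omega_0})$ satisfying \eqref{Positivity-A} and \eqref{ObsCond-a} with constants $\alpha_n \to \alpha$ and $\beta_n \to \beta$, and converging to $A$ in $L^1$ and almost everywhere. The natural construction is mollification adapted to the dilation structure, $A_n(x)=\int \rho_n(\lambda) A(\lambda x)\,\ud\lambda$ combined with a standard mollification, which should preserve the distributional inequality \eqref{Eq-Formulation-Weak-Multiplier}. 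I expect the paper provides such an approximation result (the Proposition alluded to in the statements).

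Fix $T$ satisfying \eqref{Cond-T-Wave}, and pick $T'<T$ and $\varepsilon>0$ such that \eqref{Cond-T-Wave} still holds with $T-2\varepsilon$ and with $\alpha_n,\beta_n$ for large $n$. For each smooth $A_n$, I would prove a uniform distributed estimate by the classical multiplier argument. Multiply by $x\cdot\nabla y_n + \frac{d-\beta_n}{2}\, y_n$-type multipliers to obtain, uniformly in $n$,
\[
(T-2\varepsilon-\tfrac{4\sqrt{\alpha_n}R}{2-\beta_n}\cdot\ldots)E_n(0) \lesssim \int_0^T\int_{\Gamma_0}|\partial_\nu y_n|^2 (x\cdot\nu)\,\ud\sigma\,\ud t,
\]
where $R = \sup_{x\in\Omega}|x|$. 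This is exactly the content of the smooth case of Theorem \ref{MainThmOBS}, with a constant depending only on $\alpha,\beta,T,\Omega$.

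Next I would convert the boundary term into the distributed one, as in Lions' classical argument. Choose a cutoff $\chi\in\mathscr{C}^\infty_c(\omega)$ with $\chi=1$ near $\overline{\Gamma_0}$; this is possible because $\omega\supset\Gamma_0$ is open. Let $h$ be a smooth vector field extending $\nu$ and supported in $\{\chi=1\}$. Applying the multiplier $h\cdot\nabla y_n$ on $(\varepsilon,T-\varepsilon)$ bounds $\int\int_{\Gamma_0}|\partial_\nu y_n|^2$ by $\int\int_{\omega\cap\Omega}(|\nabla y_n|^2+|\partial_t y_n|^2)$. This step involves $\nabla A_n$ on $\mathrm{supp}\, h$, and that is the \textbf{main obstacle}: $A_n$ is not uniformly Lipschitz there. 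To avoid it, I would instead localize the energy. I would multiply by $\chi^2 y_n$ to control $\int\int\chi^2 A_n\nabla y_n\cdot\nabla y_n$ by $\int\int_{\omega}|\partial_t y_n|^2$ plus lower order terms, which involves only $A_n$ and $\nabla\chi$. For the normal derivative I would modify the argument rather than estimate it. Rather than bounding $\partial_\nu y_n$, run the multiplier identity with $x\cdot\nabla y_n$ replaced by $(1-\chi)\,x\cdot\nabla y_n$. The boundary term on $\Gamma_0$ then vanishes. The extra terms, involving $\nabla\chi$, are supported in $\omega$ and bounded by the local energy there. Crucially, the term $x\cdot\nabla A_n$ now appears multiplied by $(1-\chi)\geq 0$, so its sign is still handled by the distributional condition \eqref{ObsCond-a}, preserved by $A_n$. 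This gives, uniformly in $n$, $E_n(0)\leq C\int_0^T\int_{\omega\cap\Omega}(|\partial_t y_n|^2+|\nabla y_n|^2) + C\|y_n\|^2_{L^2}$-type terms.

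Finally, the lower-order terms are removed by the standard compactness-uniqueness argument. Since it is awkward uniformly in $n$, I would first pass to the limit using Theorem \ref{Thm-Regularization} (strong convergence of $(y_n,\partial_t y_n)$ in the energy space). This yields the estimate for $y$ with the local energy plus $\|y\|_{L^2((0,T)\times\Omega)}$ on the right. I would then absorb the local gradient term by the $\chi^2 y$ multiplier, and the compact term by compactness and uniqueness. Uniqueness here means that a solution with $\partial_t y=0$ in $(0,T)\times\omega$ vanishes: $z=\partial_t y$ satisfies the same estimate with zero right-hand side, the invisible solutions form a finite-dimensional $\partial_t$-invariant space, and hence consist of eigenfunctions vanishing in $\omega$ with eigenvalue zero, which is impossible.
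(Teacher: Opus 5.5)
Up to the passage to the limit, your plan is the paper's. Proposition~\ref{Prop-Approx-a} supplies the regularization with the same $\alpha$ and $\beta$. Use its dilation-equivariant kernel $A_\varepsilon(x)=\int A(x-\varepsilon|x|y)\eta(y)\,\ud y$ and do not add a standard mollification: translation mollifiers do not commute with $x\cdot\nabla$ and in general destroy \eqref{ObsCond-a} near interfaces. Your cut-off multiplier $(1-\chi)(x\cdot\nabla y_n+\lambda y_n)$ with $\lambda=\frac{d-1}{2}+\frac{\beta}{4}$ is exactly the one of Proposition~\ref{ObsC1}, and the passage to the limit uses Theorem~\ref{Thm-Regularization}, as you propose.

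\textbf{The gap is the final compactness--uniqueness step.} You keep the global term $\|y\|_{L^2((0,T)\times\Omega)}$ on the right-hand side. Then $z=\partial_t y$ does not satisfy the estimate ``with zero right-hand side'': only the terms localized in $\omega$ vanish. The finite-dimensional argument then only produces invisible solutions $e^{\mu t}y_0$ with $\mu y_0=0$ in $\omega\cap\Omega$. The case $\mu=0$ is harmless. For $\mu\neq0$, however, $y_0$ is a Dirichlet eigenfunction of $-\div(A\nabla\cdot)$ vanishing in $\omega\cap\Omega$. Excluding it is a unique continuation property for $L^\infty$ (interface) coefficients. That property is not available in general when $d\geq3$ (see the discussion of unique continuation in the introduction), and you do not prove it. Moreover, $\partial_t y$ is not a finite energy solution, so you would have to work with difference quotients anyway.

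\textbf{The missing point is that the lower-order term can be kept local, and then uniqueness is immediate.}
\begin{enumerate}
\item Your cut-off multiplier only creates lower-order terms on $\Supp\nabla\chi\subset\omega$.
\item The terms at $t=0$ and $t=T$ are absorbed through \eqref{Cond-T-Wave}, using $\|x\cdot\nabla z+\lambda z\|_{L^2(\Omega)}\leq \sup_{x\in\Omega}|x|\,\|\nabla z\|_{L^2(\Omega)}$ for $z\in H^1_0(\Omega)$ and $\lambda\in[0,d]$. This leaves no compact remainder.
\item The gradient term on $\omega$ is removed with a multiplier such as $\eta_0^2 y$, $\eta_0\in\mathscr{C}^\infty_c((0,T)\times\omega)$, after slightly shrinking the time interval (condition \eqref{Cond-T-Wave} is open).
\end{enumerate}
This gives, uniformly in $n$ and then for $y$,
\[
E(0)\leq C\int_0^T\int_{\omega\cap\Omega}\bigl(|\partial_t y|^2+|y|^2\bigr)\,\ud x\,\ud t .
\]
Now let $y$ be the weak limit of a contradicting sequence. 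Then $\partial_t y=0$ on $(0,T)\times(\omega\cap\Omega)$. Fix $T_0<T$ still satisfying \eqref{Cond-T-Wave}. The difference quotients $z_\tau=(y(\cdot+\tau)-y)/\tau$ are finite energy solutions on $(0,T_0)$ that vanish identically on $(0,T_0)\times(\omega\cap\Omega)$. The estimate therefore forces $z_\tau=0$, hence $\partial_t y\equiv0$. Then $y\equiv0$, because $-\div(A\nabla y_0)=0$ with $y_0\in H^1_0(\Omega)$. Finally, the strong $L^2$ convergence of the sequence to $0$ contradicts the estimate. This is the paper's argument, and it needs neither finite-dimensionality nor unique continuation.
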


Our approach also applies for the plate equation in the presence of interfaces, and yields the following result, that we state only,  for the sake of simplicity, for operators of the form $\Delta( a \Delta \cdot)$ for a scalar function $a \in L^\infty(\Omega)$ with clamped boundary conditions:
\begin{theorem}\label{MainThmOBS-Plates}
    	Let $\Omega$ be a non-empty bounded open subset of $\R^d$ of class $\mathscr{C}^2$. Let $a \in L^\infty(\Omega)$ be such that it can be extended in a smooth domain $\Omega_0$ containing $\overline\Omega$, and the extension (still denoted the same) satisfies the following properties:
	\begin{itemize}
		\item There exists $\alpha >0$ such that $1/\alpha \leq a(x) \leq \alpha $ almost everywhere in $\Omega_0$.
		\item  There exists $0\leq \beta < 4$ such that 
		\begin{equation}
			\label{Multiplier-Condition-Plate}
			x \cdot \nabla a - \beta a \leq 0, \quad \text{ in the sense of $\mathscr{D}'(\Omega_0)$}. 
		\end{equation}
		\item $0 \notin \overline{\Omega_0}$ or $a$ is Lipschitz in a neighborhood of $0$. 
	\end{itemize}
	Let $\omega$ be an open subset of $\Omega_0$ satisfying \eqref{Multiplier-Condition-omega}. 
	
	Then, for any $T >0$, there exists $C>0$ such that any solution $y$ of 
	\begin{equation}
		\label{Plate-Eq}
		\left\{
			\begin{array}{ll}
				\partial_{tt} y + \Delta (a \Delta y) = 0 & \text{ in } (0,T) \times \Omega, 
				\\
				y = \partial_\nu y = 0 & \text{ on } (0,T) \times \partial \Omega, 
				\\ 
				(y, \partial_t y)\Big|_{t= 0} = (y_0, y_1) & \text{ in } \Omega,
			\end{array}
		\right.		
	\end{equation}
	with initial data $(y_0, y_1) \in H^2_0(\Omega) \times L^2(\Omega)$ satisfies
	\begin{equation}
		\label{Obs-Plate-Dis}
		\int_\Omega \left( a |\Delta y_0|^2 + |y_1|^2 \right) \, \ud x
		\leq 
		C \int_0^T \int_{\omega\cap \Omega} |\partial_t y|^2 \, \ud x\, \ud t. 
	\end{equation}

	Similarly, if $\Gamma_0$ is defined by  \eqref{Multiplier-Set-Gamma} and $a$ additionally is Lipschitz in a neighborhood of $\Gamma_0$, for any $T >0$, there exists $C>0$ such that any solution $y$ of \eqref{Plate-Eq}
	with initial data $(y_0, y_1) \in H^2_0(\Omega) \times L^2(\Omega)$ satisfies
	\begin{equation}
		\label{Obs-Plate-Gamma}
		\int_\Omega \left( a |\Delta y_0|^2 + |y_1|^2 \right) \, \ud x
		\leq 
		C \int_0^T \int_{\Gamma_0} |\Delta y|^2 \, \ud \sigma\, \ud t. 
	\end{equation}
\end{theorem}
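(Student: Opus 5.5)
We follow the regularization strategy used for the wave equation, adapted to the plate operator $\Delta(a\Delta\cdot)$. The plan has four steps.

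\textbf{Step 1: smooth approximations.} We first build coefficients $a_n\in\mathscr{C}^\infty(\overline{\Omega_0'})$, on a slightly smaller smooth domain $\Omega_0'\supset\overline\Omega$, such that:
\begin{itemize}
\item $1/\alpha\leq a_n\leq\alpha$;
\item $a_n\to a$ in $L^1$ and almost everywhere;
\item $x\cdot\nabla a_n-\beta_n a_n\leq 0$ pointwise, with $\beta_n\to\beta<4$.
\end{itemize}
The construction is the same as for the scalar wave case. We use a multiplicative dilation-adapted mollification $a_n(x)=\int a(e^{-s}x)\rho_n(s,\cdot)\cdots$, or equivalently convolution in logarithmic polar variables. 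This commutes with the generator $x\cdot\nabla$ of dilations, so it preserves the distributional inequality \eqref{Multiplier-Condition-Plate} up to an $O(1/n)$ loss in $\beta$. Near $0$ we use the Lipschitz assumption, or $0\notin\overline{\Omega_0}$, to control the singularity of polar coordinates. Where $a$ is Lipschitz (near $\Gamma_0$), we additionally get $a_n\to a$ in $W^{1,\infty}$ weak-$*$ with uniform bounds.

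\textbf{Step 2: strong convergence of solutions.} Let $y_n$ solve \eqref{Plate-Eq} with coefficient $a_n$ and the same data. We prove $(y_n,\partial_t y_n)\to(y,\partial_t y)$ in $\mathscr{C}^0([0,T];H^2_0\times L^2)$, mirroring the wave-case result (Theorem \ref{Thm-Regularization}).
\begin{itemize}
\item The energies $\int a_n|\Delta y_n|^2+|\partial_t y_n|^2$ are conserved and converge to $E(0)$ by dominated convergence.
\item Uniform bounds give weak-$*$ limits. Passing to the limit in the weak formulation uses that $a_n\Delta\varphi\to a\Delta\varphi$ strongly in $L^2$ for test functions $\varphi$ (dominated convergence), together with $\Delta y_n\rightharpoonup\Delta y$. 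Uniqueness then identifies the limit as $y$.
\item Strong convergence follows from convergence of the energy norms. The norms vary with $n$, so we write $\int a_n|\Delta(y_n-y)|^2=\int a_n|\Delta y_n|^2-2\int a_n\Delta y_n\Delta y+\int a_n|\Delta y|^2$. The middle term converges to $\int a|\Delta y|^2$ because $a_n\Delta y\to a\Delta y$ strongly in $L^2$.
\end{itemize}

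\textbf{Step 3: uniform observability for smooth coefficients (the main obstacle).} For smooth $a_n$ we apply the multiplier $x\cdot\nabla y_n$ plus a multiple of $y_n$ to the plate equation. Integrating by parts gives:
\begin{itemize}
\item the interior term $\frac12\int (4-\beta_n)a_n|\Delta y_n|^2$, coming from $\int (x\cdot\nabla a_n)|\Delta y_n|^2\leq\beta_n\int a_n|\Delta y_n|^2$ combined with the scaling identity for $\Delta(x\cdot\nabla y)=2\Delta y+x\cdot\nabla\Delta y$;
\item a kinetic term;
\item boundary terms $\int_{\partial\Omega}a_n|\Delta y_n|^2 (x\cdot\nu)$, which have the good sign off $\Gamma_0$ since $y=\partial_\nu y=0$;
\item a lower-order term controlled by compactness.
\end{itemize}
The point to check carefully is that no term involves $\nabla a_n$ except through $x\cdot\nabla a_n$. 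We expect this to hold because the multiplier is exactly the dilation field. Since plates have infinite speed of propagation, the usual $T$-dependence is removed: we obtain an estimate for any $T$ using the bound $|\int y_t\, x\cdot\nabla y|\lesssim E$ only up to a compact remainder, handled by a standard compactness–uniqueness argument. The uniqueness part is the delicate point and must be uniform in $n$. We would instead run the multiplier in an extended domain or use weighted time cut-offs to absorb the lower-order terms with constants depending only on $\alpha$, $\beta$ and $\Omega$, not on $n$.

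For the distributed version we localize with a cut-off $\chi$ supported in $\omega$. The boundary term on $\Gamma_0$ is then replaced by interior terms on $\omega$ involving $|\Delta y_n|^2$ and $\nabla\Delta y_n$, which are converted into $|\partial_t y_n|^2$ on a larger neighborhood. This uses the equation and interpolation (energy at negative order), with constants uniform in $n$ since $a$ only needs bounds there. This conversion is the most technical part: it is easy for smooth coefficients but needs care to stay uniform when $a$ is rough inside $\omega$. First we would shrink $\omega$ to a neighborhood of $\overline{\Gamma_0}$; failing that, we would use the negative-order estimates of the type employed for Dehman–Ervedoza densities.

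\textbf{Step 4: passing to the limit.} The uniform inequality $E_n(0)\leq C\int_0^T\int_{\omega\cap\Omega}|\partial_t y_n|^2$ passes to the limit directly by Step 2. For the boundary inequality \eqref{Obs-Plate-Gamma}, we need $\Delta y_n|_{\Gamma_0}\to\Delta y|_{\Gamma_0}$ in $L^2$. This follows from the plate analogue of the hidden-regularity convergence (Corollary \ref{Cor-Conv-Normal-Trace}). We obtain it by applying the multiplier $h\cdot\nabla(y_n-y_m)$, with $h$ a smooth extension of $\nu$ supported where $a$ is Lipschitz, to the differences $y_n-y_m$, using the uniform $W^{1,\infty}$ bounds on $a_n$ there.
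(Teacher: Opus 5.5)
Your Steps 1--2 are sound and parallel the paper, which uses the dilation-covariant mollifier $A_\varepsilon(x)=\int A(x-\varepsilon|x|y)\eta(y)\,\ud y$ (preserving the multiplier condition with the same $\beta$) and the plate analogue of Theorem \ref{Thm-Regularization}. The gap is in Step 3, and it carries over to Step 4. You want, for every $T>0$, an inequality $E_n(0)\le C\int_0^T\int_{\omega\cap\Omega}|\partial_t y_n|^2$ (or its $\Gamma_0$ version) with $C$ independent of $n$. You propose to reach it by running the multiplier on an extended domain or with time cut-offs. This does not work. For small $T$ the multiplier identity only yields $T(1-\beta/4)E$ on the good side. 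The endpoint terms $\int_\Omega\partial_t y\,(x\cdot\nabla y+\lambda y)\,\ud x\big|_0^T$ (or, with a cut-off $\theta(t)$, the terms $\int_0^T\int_\Omega\theta'\,\partial_t y\,(x\cdot\nabla y+\lambda y)$) are compact but not small. On low-frequency data, where $\|\nabla y\|_{L^2}$ is comparable to $\|\Delta y\|_{L^2}$, they can be of order $E$ rather than $O(T)\,E$, so they cannot be absorbed. In the distributed case the paper likewise keeps the lower-order terms $|\nabla y|^2+|y|^2$ on $\omega$; only the $D^2y$ term is removed locally, via the multiplier $\eta_0^2 y$ and elliptic regularity. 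So a uniqueness input is unavoidable, and you leave exactly this open (``must be uniform in $n$''). Step 4 then has no clean uniform inequality to pass to the limit.

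The missing idea is to not ask for uniformity at that level, and to prove uniqueness for the rough coefficient itself. Uniformly in $n$, the multiplier $x\cdot\nabla y+\lambda y$ with $\lambda=\frac d2-1+\frac\beta4$ gives, for every $T>0$, the estimates with compact remainders \eqref{Obs-Plates-Boundary-smooth} and \eqref{Obs-Plates-Distributed-smooth}. For $T$ large, the Poincar\'e inequality $\|\nabla z\|_{L^2}\le C\|\Delta z\|_{L^2}$ on $H^2_0(\Omega)$ absorbs the endpoint terms and gives \eqref{Obs-Plates-Dis-smooth-large-time}. All of these pass to the limit by your Steps 1--2. The compactness--uniqueness argument is then run once, for $a\in L^\infty$, where no unique continuation theorem for $\Delta(a\Delta\cdot)$ is available. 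The paper proves the required uniqueness (Proposition \ref{Prop-Uniqueness-Plate}) as follows.
\begin{enumerate}
\item Combine the remainder estimate at $T_0=T/3$, $T_1=2T/3$ with $\|\nabla y\|_{L^2}^2\le\|y\|_{L^2}\|\Delta y\|_{L^2}$ and with conservation of the weak energy $E_{-1}(t)=\frac12\int_\Omega(|y|^2+a|\Delta\partial_t w|^2)\,\ud x$, where $\Delta(a\Delta w)=y$ with clamped conditions. This shows invisible data satisfy $E(0)\le C E_{-1}(0)$, so they form a finite-dimensional space.
\item Time difference quotients show this space is stable under the generator. If it is nontrivial, it therefore contains an eigenvector, i.e.\ a solution $y_0e^{\lambda t}$ defined and invisible for all $t\ge 0$.
\item The large-time estimate then forces $y_0=0$.
\end{enumerate}
Applying this to $y(\cdot+\tau)-y$ handles the case $\partial_t y=0$ on $\omega$. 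The final contradiction argument (Rellich, Aubin--Lions) then removes the remainders.
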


Recall that, similarly as Theorem \ref{Thm-WP} for the wave equation, the plate equations \eqref{Plate-Eq} with coefficient $a \in L^\infty(\Omega)$ bounded from below by a positive constant is well-posed, see \cite[Chapter III, Sections 8 and 9]{LionsMagenes}). Solutions $y$ of \eqref{Plate-Eq} with initial data in $H^2_0(\Omega) \times L^2(\Omega)$ belong to $\mathscr{C}^0([0,T]; H^2_0(\Omega))\cap \mathscr{C}^1([0,T]; L^2(\Omega))$. Furthermore the energy of the solutions $y$ of \eqref{Plate-Eq}, defined for $t \in [0,T]$ by 
\begin{equation}
	\label{Energy-Plate}
    E(t) = \frac{1}{2} \int_{\Omega} \left( a(x)|\Delta y(t,x)|^2+ |\partial_t y(t,x)|^2  \right)  \, \ud x,
\end{equation}
is constant with respect to $t$, so that the left-hand side of \eqref{Obs-Plate-Dis} (or \eqref{Obs-Plate-Gamma}) coincides with the energy of the solutions.

Similarly to the wave equation, the multiplier condition \eqref{Multiplier-Condition-Plate} is not required to hold pointwise; instead, it is assumed in the sense of distributions. This formulation allows us to recover the case of transmission conditions for plates, originally studied in \cite{LiuWilliams} which,  like the wave counterparts, impose a sign condition on the jumps of the coefficients and require the inner domain to be star shaped. Consequently, our approach provides a natural extension for interface configurations like the ones in Figure \ref{Figures} also for the plate equation. 

The approach is similar to that we have developed for the wave equation: we carry out a regularization process and prove the strong convergence of the regular solutions and  uniform observability inequalities, which are obtained by employing multiplier techniques, this time particularly adapted to handle clamped boundary conditions.
%% of the form 
%$x \cdot \nabla y + \lambda y$, 

For a sufficiently large time $T>0$, we obtain a direct observability estimate (either from the boundary $\Gamma_0$ or from a distributed subset which is a neighborhood of $\Gamma_0$) by the multiplier method. However, to extend this result to an arbitrary time $T>0$, we first establish a weak observability inequality that includes compact terms of the form $\left(\|\nabla y_0\|_{L^2(\Omega)}^2 + \|\nabla y(T)\|_{L^2(\Omega)}^2\right)$ on the right-hand side. These lower-order terms are subsequently removed by means of a compactness-uniqueness argument.

\subsection{Additional comments}

\paragraph{Carleman estimates.} Given the above results, it is natural to ask whether Carleman estimates hold for the wave and plate equations with rough coefficients under the same weak multiplier conditions. However, this remains an open problem, as existing proofs of Carleman estimates typically require coefficients to be $\mathscr{C}^1$ or piecewise $\mathscr{C}^1$ with smooth interfaces, see \cite{Fu-Yong-Zhang-2007, Liu-2013,BIMO} and references therein.
In a forthcoming work, we will show that Carleman estimates for the $1$-d wave equation hold for coefficients in the BV (bounded variation) class.

\paragraph{Consequences on observability and control of heat equation with rough coefficients.}

The so-called transmutation method \cite{Miller-2006} (see also \cite{Ervedoza-Zuazua}) shows that the exact observability of the wave equation in some time implies the observability at any final time for the corresponding heat equation. Accordingly, our results imply the observability at any final time of heat operators of the form $\partial_t - \div(A \nabla \cdot)$ (with Dirichlet boundary conditions) for any $A$ satisfying Assumption \ref{Assumption1}, allowing in particular rough coefficients and interfaces, providing a suitable weak multiplier condition is satisfied. 

While the weak multiplier condition in Assumption \ref{Assumption1} is very likely not needed for the final time observability of the heat equation (when the coefficient is piecewise $\mathscr{C}^1$ and the discontinuities form closed interfaces, it reminds the geometric limitations in the work \cite{Doubova-Osses-Puel}, which were later shown to be unnecessary, see \cite{LeRousseau-Robbiano-2010, LeRousseau-Robbiano-2011,LeRousseau-Lerner-2013}), this method allows to deal with geometric cases such as the ones presented in Figure \ref{Figures}, which to our knowledge cannot be handled in the literature so far. 

\paragraph{Weak structural conditions.} Our approach demonstrates that appropriate structural conditions can help overcome typical regularity thresholds for coefficients or unwanted behaviors. This is relevant in several contexts:

{\bf Numerical computation of exact controls.} 
Since the pioneering work of \cite{Glowinski-Li-Lions}, it is known that discretizing the wave equation and computing the exact control for the discrete system fails to provide good approximations of the continuous control due to spurious high-frequency solutions \cite{Zuazua-2005-SIAM}. This occurs because discrete wave equations are generally not uniformly observable, even when the continuous system is observable.

One way to restore uniform observability is to design adapted numerical meshes. So far, this strategy has only been developed in $1$-d \cite{Ervedoza-Marica-Zuazua} by choosing meshes with a structure that aligns with a discrete multiplier argument. Extending this approach to higher dimensions remains an open problem.

{\bf Unique continuation for elliptic operators.}
We also mention the study of unique continuation properties for elliptic operators of the form $\div(A \nabla \cdot)$: if $u$ is a solution of $\div(A \nabla u ) = 0$ in $\Omega$ which vanishes in some open subset $\omega \subset \Omega$, is it true that $u$ vanishes everywhere? 
This property is true when $A$ belongs to $L^\infty$ when $d = 2$ (\cite{Bers-Niremberg-1955,Schulz-1998}), and when $A$ is Lipschitz when $d \geq 3$ (\cite{Wolff-1992,Koch-Tataru-2001}). In the other direction, there are counterexamples to unique continuation in dimension $d \geq 3$ due to \cite{Mandache-1998} (see also \cite{Miller-1974}), which are H\"older continuous to any order $\alpha \in (0,1)$. The last result we are aware of on this topic is the work \cite{Jeznach-2025}, with refined estimates on the modulus of continuity needed to guarantee unique continuation (allowing to consider log-Lipschitz classes), and we refer to it for further references. 
It would be interesting to investigate if unique continuation properties from a set $\omega$ containing $0$ can be proved for solutions of $\div(A \nabla u ) = 0$ in $\Omega$ vanishing in $\omega$ in cases in which $A$ satisfies multiplier type conditions of the form $x \cdot \nabla A \geq 0$ in $\mathscr{D}'$.

\bigskip

\noindent{\bf Outline.} The remainder of the paper is organized as follows: Section \ref{Sec-Regularization} is devoted to proving the convergence of the solutions of the wave equation in the corresponding energy space, under the assumption of convergence of the main coefficient. In Section \ref{MultSect}, observability estimates are established for solutions corresponding to regular main coefficients; moreover, using the convergence of these solutions, observability estimates are derived for solutions corresponding to an $L^\infty$ main coefficient. Theorems \ref{MainThmOBS}--\ref{MainThmOBS-distributed} and \ref{MainThmOBS-Plates} are respectively proven in Sections \ref{Subsec-Multiplier-wave} and \ref{Subsec-Obs-Plates}.
%Finally, Section \ref{Conclusion} collects some additional remarks.
%Finally, Section \ref{Carleman-Section} is devoted to obtain a Carleman-type inequality and its application to obtain the stability of an coefficient inverse problem. 

% ------------------------------
%
\section{The regularization process}
\label{Sec-Regularization}

The goal of this section is to prove the following result:
\begin{theorem}
	\label{Thm-Regularization}
	Let $\Omega$ be a smooth bounded domain of $\R^d$, $d \geq 1$, and let $T>0$.
	
	Let $A \in L^\infty(\Omega; \R^{d \times d})$ and a sequence $(A_n)_{n \in \N} \in L^\infty(\Omega; \R^{d\times d} )$ taking value in the set of symmetric matrices such that, for some $\alpha >0$.
	\begin{align}
		\label{Ass-WP-a}
		&
		\text{a.e. } x \in \Omega, \quad \frac{1}{\alpha} I_{d\times d} \leq A(x) \leq \alpha I_{d\times d}, 
		\\
		\label{Ass-WP-a-n}
		&
		\forall n \in \N,\, \text{a.e. } x \in \Omega, \quad \frac{1}{\alpha} I_{d\times d} \leq A_n(x) \leq \alpha I_{d\times d}, 
		\\
		\label{Convergence-a-n-a}
		& (A_n) \underset{n \to \infty} \longrightarrow A \text{ in } L^1(\Omega ; \R^{d \times d}), 
		\hbox{ and } 
		A_n \underset{n \to \infty} \longrightarrow A \text{ almost everywhere in } \Omega. 
	\end{align}
	Then, for any $(y_0, y_1) \in H^1_0(\Omega) \times L^2(\Omega)$, denoting by $y$ and $y_n$ the solutions of 
	\begin{equation}
		\label{Wave-eq-a}
		\left\{
			\begin{array}{ll}
				\partial_{tt} y - \div (A \nabla y) = 0 & \text{ in } (0,T) \times \Omega, 
				\\
				y = 0 & \text{ on } (0,T) \times \partial \Omega, 
				\\ 
				(y, \partial_t y)\Big|_{t= 0} = (y_0, y_1) & \text{ in } \Omega,
			\end{array}
		\right.
	\end{equation}
	and
	\begin{equation}
		\label{Wave-eq-a-n}
		\left\{
			\begin{array}{ll}
				\partial_{tt} y_n - \div (A_n \nabla y_n) = 0 & \text{ in } (0,T) \times \Omega, 
				\\
				y_n = 0 & \text{ on } (0,T) \times \partial \Omega, 
				\\ 
				(y_n, \partial_t y_n)\Big|_{t= 0} = (y_0, y_1) & \text{ in } \Omega,
			\end{array}
		\right.
	\end{equation}
	the sequence $(y_n)_{n \in \N}$ converges strongly to $y$ in $L^2(0,T; H^1_0(\Omega)) \cap H^1(0,T; L^2(\Omega))$ as $n \to \infty$, and the sequence $(y_n(T), \partial_t y_n(T))_{n \in \N}$ converges strongly to $(y(T), \partial_t y(T))$ in $H^1_0(\Omega)\times L^2(\Omega)$.
\end{theorem}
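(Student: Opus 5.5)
The plan is a standard weak-compactness argument, upgraded to strong convergence through energy conservation. Throughout, $y$ denotes the solution of \eqref{Wave-eq-a} and $y_n$ that of \eqref{Wave-eq-a-n}.

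\textbf{Uniform bounds and weak convergence.} By Theorem \ref{Thm-WP} and \eqref{Ass-WP-a-n}, the energies of $y_n$ are conserved and equal to
\[
E_n(0)=\int_\Omega (A_n\nabla y_0\cdot\nabla y_0+|y_1|^2)\,\ud x .
\]
This quantity is bounded by $\alpha\|\nabla y_0\|^2_{L^2}+\|y_1\|^2_{L^2}$. Moreover, by dominated convergence (using \eqref{Convergence-a-n-a} a.e. and the bound $|A_n\nabla y_0\cdot\nabla y_0|\le \alpha|\nabla y_0|^2$), $E_n(0)$ converges to
\[
E(0)=\int_\Omega (A\nabla y_0\cdot\nabla y_0+|y_1|^2)\,\ud x .
\]
Hence $(y_n)$ is bounded in $L^\infty(0,T;H^1_0)\cap W^{1,\infty}(0,T;L^2)$. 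Extracting a subsequence, $y_n\rightharpoonup \tilde y$ weakly-$*$ in these spaces. By Aubin--Lions, $y_n \to \tilde y$ strongly in $\mathscr C^0([0,T];L^2)$.

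\textbf{Identifying the limit.} To pass to the limit in the weak formulation, the only delicate term is $\int_0^T\!\!\int_\Omega A_n\nabla y_n\cdot\nabla\varphi$ for a test function $\varphi$. Write it as $\int \nabla y_n\cdot A_n\nabla\varphi$. By dominated convergence (a.e. convergence plus the uniform bound), $A_n\nabla\varphi\to A\nabla\varphi$ strongly in $L^2$, while $\nabla y_n$ converges weakly. So this term tends to $\int \nabla\tilde y\cdot A\nabla\varphi$. The initial data pass to the limit in the usual way. Uniqueness in Theorem \ref{Thm-WP} then gives $\tilde y=y$, and the whole sequence converges.

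\textbf{Strong convergence.} This is the main step. Set $z_n=y_n-y$ and consider
\[
\int_0^T\!\!\int_\Omega \big(A_n\nabla z_n\cdot\nabla z_n+|\partial_t z_n|^2\big)\,\ud x\,\ud t .
\]
Expanding the square gives three groups of terms:
\begin{itemize}
\item the term $\int\!\!\int (A_n\nabla y_n\cdot\nabla y_n+|\partial_ty_n|^2)=TE_n(0)$, which tends to $TE(0)$;
\item the cross terms $-2\int\!\!\int (\nabla y_n\cdot A_n\nabla y+\partial_ty_n\,\partial_ty)$;
\item the term $\int\!\!\int (A_n\nabla y\cdot\nabla y+|\partial_t y|^2)$, which tends to $TE(0)$ by dominated convergence.
\end{itemize}
In the cross terms, $A_n\nabla y\to A\nabla y$ strongly in $L^2((0,T)\times\Omega)$ by dominated convergence, and $\nabla y_n\rightharpoonup\nabla y$ weakly. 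So the cross terms tend to $-2TE(0)$. The sum therefore tends to $0$, and coercivity \eqref{Ass-WP-a-n} yields strong convergence in $L^2(0,T;H^1_0)\cap H^1(0,T;L^2)$.

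\textbf{Convergence at time $T$.} The same argument works at $t=T$. First, $(y_n(T),\partial_ty_n(T))\rightharpoonup(y(T),\partial_ty(T))$ weakly in $H^1_0\times L^2$. This follows from the uniform bounds together with the identification of limits: $y_n(T)\to y(T)$ in $L^2$, and $\partial_t y_n(T)\rightharpoonup\partial_ty(T)$, as can be seen by testing the equation against $\psi(x)\chi(t)$. Then expand $\int_\Omega (A_n\nabla z_n(T)\cdot\nabla z_n(T)+|\partial_tz_n(T)|^2)$ exactly as above. Energy conservation gives $E_n(T)=E_n(0)\to E(0)=E(T)$, and the cross terms converge as before.

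The main obstacle is that $A_n\to A$ holds only in $L^1$ and a.e., not uniformly. This is why every product must be arranged so that $A_n$ multiplies a fixed $L^2$ function, where dominated convergence applies, rather than a weakly converging one.
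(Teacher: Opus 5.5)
Your argument is correct, but it differs from the paper's in two places: how the limit is identified, and how strong convergence is obtained.

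\textbf{Identifying the limit.} The paper avoids compactness and uniqueness altogether. It introduces the auxiliary elliptic problem \eqref{Def-w-n} and proves an $H^{-1}$-type energy estimate for $z_n=y_n-y$, namely \eqref{Estimates-on-w-n}. This gives the quantitative bound $\sup_{t\in[0,T]}\|y_n(t)-y(t)\|_{L^2(\Omega)}^2\leq C\|(A_n-A)\nabla y\|_{L^2((0,T)\times\Omega)}^2$ and, as a by-product, $\partial_t y_n(T)\to\partial_t y(T)$ in $H^{-1}(\Omega)$.

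You instead use Aubin--Lions, pass to the limit in the weak formulation by moving $A_n$ onto the test function, and invoke uniqueness. That is sound. However, the uniqueness you need is that of weak solutions in $L^\infty(0,T;H^1_0(\Omega))\cap W^{1,\infty}(0,T;L^2(\Omega))$. This is classical, via Ladyzhenskaya's trick, as in Lions--Magenes, Chapter III, Section 8. Uniqueness in the class $\mathscr{C}^0([0,T];H^1_0(\Omega))\cap\mathscr{C}^1([0,T];L^2(\Omega))$ stated in Theorem \ref{Thm-WP} is not enough, because $\tilde y$ is not known a priori to be continuous with values in the energy space.

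\textbf{Strong convergence.} The paper goes through the equipartition identities \eqref{Equipartition-Energy-y}. It first gets convergence of $\int_0^T\!\!\int_\Omega|\partial_t y_n|^2$, then of $\int_0^T\!\!\int_\Omega A_n\nabla y_n\cdot\nabla y_n$. It then treats the cross term $\int_0^T\!\!\int_\Omega A_n\nabla y_n\cdot\nabla y$ by testing the equation of $y_n$ against $y$.

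Your single expansion of the $A_n$-weighted space--time energy of $y_n-y$ makes that detour unnecessary. The diagonal term is exactly $TE_n(0)$, and the cross term becomes a strong--weak pairing once $A_n$ is moved onto $\nabla y$. This is the same computation the paper performs only at $t=T$ in \eqref{towards-conv-at-time-T}.

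\textbf{Comparison.} Your route is shorter and handles both conclusions with one computation. The paper's route buys an explicit $L^2$ stability estimate in terms of $\|(A_n-A)\nabla y\|_{L^2}$, and it identifies the limit of the whole sequence without extracting subsequences.
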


\begin{remark}
	\label{Rem-WP}
	The assumptions \eqref{Ass-WP-a}--\eqref{Ass-WP-a-n} ensure the well-posedness of the systems \eqref{Wave-eq-a} and \eqref{Wave-eq-a-n} in $H^1_0(\Omega) \times L^2(\Omega)$, and simply corresponds to the usual $L^\infty$ and ellipticity conditions on the coefficient $A$, respectively $A_n$.
\end{remark}

\begin{proof}
	%
%	First note that since the sequence $(A_n)_{n \in \N}$ is bounded in $L^\infty(\Omega)$ and strongly convergent to $A$ in $L^1(\Omega)$ as $n\to \infty$, it is weakly-$\star$ convergent in $L^\infty(\Omega)$ to $A$ as $n \to \infty$. 
	%
	\medskip
	
	{\it A priori estimates.} The energy identities immediately give that for all $t \in [0,T]$,
	\begin{align}
		\label{Energy-a}
		&
		\int_\Omega \left(  A \nabla y(t,x) \cdot \nabla y(t,x) +  |\partial_t y(t,x)|^2\right) \, \ud x 
		= 
		\int_\Omega \left(   A \nabla y_0(x) \cdot \nabla y_0(x)+ |y_1(x)|^2 \right) \, \ud x,  
		\\
		\label{Energy-a-n}
		&
		\forall n \in \N,
		\quad 
		\int_\Omega \left(   A_n \nabla y_n(t,x) \cdot \nabla y_n(t,x)+ |\partial_t y_n(t,x)|^2 \right) \, \ud x 
		= 
		\int_\Omega \left(  A_n \nabla y_0(x) \cdot \nabla y_0(x) + |y_1(x)|^2  \right) \, \ud x . 
	\end{align}
	Accordingly, since the sequence $(A_n)$ is uniformly bounded from below and from above (recall \eqref{Ass-WP-a}--\eqref{Ass-WP-a-n}, the sequence $(y_n)_{n \in \N}$ is uniformly bounded in $\mathscr{C}^0([0,T]; H^1_0(\Omega)) \cap \mathscr{C}^1([0,T]; L^2(\Omega))$. Hence, up to a subsequence still denoted the same for simplicity, there exists $\tilde y  \in L^\infty(0,T; H^1_0(\Omega)) \cap W^{1, \infty}(0,T; L^2(\Omega))$ such that the sequence $(y_n)_{n \in \N}$ weakly converges to $\tilde y$ in $L^2(0,T; H^1_0(\Omega)) \cap H^1(0,T; L^2(\Omega))$. 
	\medskip
	
	{\it Identifying the weak limit $\tilde y$.} We define 
	$$
		\forall n \in \N, \quad z_n = y_n - y, \text{ in } (0,T) \times \Omega.
	$$
	Then $z_n$ satisfies the equation
	\begin{equation}
		\label{Wave-eq-z-n}
		\left\{
			\begin{array}{ll}
				\partial_{tt} z_n - \div (A_n \nabla z_n) = \div ((A_n - A) \nabla y ) & \text{ in } (0,T) \times \Omega, 
				\\
				z_n = 0 & \text{ on } (0,T) \times \partial \Omega, 
				\\ 
				(z_n, \partial_t z_n)\Big|_{t= 0} = (0, 0) & \text{ in } \Omega.
			\end{array}
		\right.
	\end{equation}
	To estimate $z_n$, we introduce the function $w_n$ given for all $t \in [0,T]$ by 
	\begin{equation}
		\label{Def-w-n}
		\left\{
			\begin{array}{ll}
				- \div (A_n \nabla w_n(t)) = z_n (t) & \text{ in } \Omega, 
				\\
				w_n(t) = 0 & \text{ on }  \partial \Omega. 
			\end{array}
		\right.
	\end{equation}
	Note that $z_n \in L^2(0,T; H^1_0(\Omega)) \cap H^1(0,T; L^2(\Omega))$, hence the classical elliptic regularity yields that $\partial_t w_n \in L^2(0,T; H^2(\Omega) \cap H^1_0(\Omega))$. We can thus  multiply \eqref{Wave-eq-z-n} by $\partial_t w_n$, and we get, for all $t \in [0,T]$:
	\begin{multline}
		\label{H--1-Energy-of-z}
		\int_\Omega \partial_{tt} \left( - \div (A_n \nabla w_n(t) )\right)  \partial_t w_n(t) \, \ud x 
		- 
		\int_\Omega \div (A_n \nabla z_n(t) ) \partial_t w_n(t) \, \ud x 
		\\
		= 
		- \int_\Omega  (A_n - A) \nabla y(t)  \cdot \nabla \partial_t w_n(t) \, \ud x.
	\end{multline}
	The first term gives: 
	\begin{align*}
		\int_\Omega \partial_{tt} \left( - \div (A_n \nabla w_n(t) )\right) \partial_t w_n(t) \, \ud x 
		&
		= 
		\int_\Omega A_n \partial_{tt} \nabla w_n(t) \cdot \partial_t \nabla  w_n(t) \, \ud x
		\\
		&= 
		\frac{d}{dt} \left(\frac{1}{2} \int_\Omega A_n \nabla \partial_t w_n \cdot \nabla \partial_t w_n \, \ud x \right).
	\end{align*}
	The second term yields: 
	\begin{align*}
		- \int_\Omega \div (A_n \nabla z_n(t) ) \partial_t w_n(t) \, \ud x 
		&
		= 
		\int_\Omega  z_n(t)  \partial_t \left( - \div (A_n \nabla w_n(t))\right) \, \ud x 
		\\ 
		&
		= 
		\frac{d}{dt} \left(\frac{1}{2} \int_\Omega |z_n(t)|^2 \, \ud x \right).
	\end{align*}
	For the last term, we simply write 
	\begin{align*}
		\left| 
			\int_\Omega  (A_n - A) \nabla y(t)  \cdot \nabla \partial_t w_n(t) \, \ud x
		\right|
		& 
		\leq 
		 \int_\Omega |(A_n -A) \nabla y(t)| | \nabla \partial_t w_n(t)| \, \ud x
		 \\
		 & 
		 \leq
		 \| (A_n -A) \nabla y(t) \|_{L^2(\Omega)} \| \nabla \partial_t w_n(t) \|_{L^2(\Omega)}.
	\end{align*}
	Together with \eqref{Ass-WP-a-n}, the equation \eqref{H--1-Energy-of-z} then gives that 
	$$
		\frac{d}{dt} \left( E_{w_n} (t) \right) \leq \frac{1}{\sqrt{\alpha}} \sqrt{2 E_w(t)} \| (A_n -A) \nabla y(t) \|_{L^2(\Omega)}, 
	$$
	where we have introduced the energy 
	$$
		E_{w_n}(t) = \frac{1}{2} \int_\Omega \left(   |\div (A_n \nabla w_n(t) )|^2 + A_n \nabla \partial_t w_n(t) \cdot \nabla \partial_t w_n(t)\right) \, \ud x, 
	$$
	which in fact contains the $L^{2}$-norm of the solutions $z_n$ of \eqref{Wave-eq-z-n}, as for all $t \in [0,T]$, 
	$$
		\int_\Omega |\div (A_n \nabla w_n(t))|^2 \, \ud x = \int_\Omega |z_n(t) |^2 \, \ud x
	$$
	from the definition \eqref{Def-w-n} of $w_n$.
	We then easily get that, for all $T>0$, there exists a constant $C>0$ such that for all $n \in \N$, 
	\begin{equation}
		\label{Estimates-on-w-n}
		\sup_{t \in [0,T]} \{ E_{w_n}(t) \} \leq C \| (A_n - A) \nabla y \|_{L^2 ((0,T) \times \Omega)}^2.
	\end{equation}
	Now, since $ |A_n - A|^2 |\nabla y|^2 \leq 4 \alpha^2 |\nabla y|^2$ and $\nabla y \in L^2((0,T) \times \Omega)$, and the sequence $(A_n)_{n \in \N}$ converges to $A$ almost everywhere in $\Omega$ as $n \to \infty$, the Lebesgue dominated convergence theorem implies that 
	$$
		\lim_{n \to \infty}  \| (A_n - A) \nabla y \|_{L^2 ((0,T) \times \Omega)}^2 = 0.
	$$
	Since
	$$
		\sup_{t \in [0,T]} \| z_n (t )\|_{L^2(\Omega)}^2
		\leq 
		2 \sup_{t \in [0,T]} \{ E_{w_n}(t) \} , 
	$$
	it follows that 
	$$
		\lim_{n \to \infty} \sup_{t \in [0,T]}\| z_n (t )\|_{L^2(\Omega)}^2 = 0.
	$$
	Therefore, the sequence $(y_n)_{n \in \N}$ converges to $y$ strongly in $L^2((0,T) \times \Omega)$ as $n \to \infty$, and thus the weak limit $\tilde y$ coincides with the solution $y$ of \eqref{Wave-eq-a}. 
	
	For later use, let us also point out that, as $(z_n(T))_{n \in \N}$ converges to $0$ in $L^2(\Omega)$ from the previous computations, the sequence $(y_n(T))_{n \in \N}$ also strongly converges to $y(T)$ in $L^2(\Omega)$. Similarly, we claim that the sequence $(\partial_t y_n(T))_{n \in \N}$ strongly converges to $\partial_t y(T)$ in $H^{-1}(\Omega)$. Indeed, using the fact that the Laplace operator with Dirichlet boundary conditions $-\Delta_D$ is an isomorphism between $H^1_0(\Omega)$ and $H^{-1}(\Omega)$ and the bounds in \eqref{Ass-WP-a}--\eqref{Ass-WP-a-n}, we have that 
	\begin{align*}
		\| \partial_t z_n (T) \|_{H^{-1}(\Omega)}^2 
		&
		= 
		\int_\Omega |\nabla ( (-\Delta_D)^{-1} \partial_t z_n(T) )|^2 \, dx
		\\
		&
		= 
		\int_\Omega ( (-\Delta_D)^{-1} \partial_t z_n(T) ) \partial_t z_n(T) \, dx
		\\ 
		&
		= 
		\int_\Omega ( (-\Delta_D)^{-1} \partial_t z_n(T) ) (- \div (A_n \nabla \partial_t w_n(T)) \, dx
		\\
		&
		= 
		\int_\Omega A_n \nabla ( (-\Delta_D)^{-1} \partial_t z_n(T) ) \cdot \nabla \partial_t w_n(T) \, dx
		\\
		& 
		\leq 
		\alpha 
		\| \nabla ( (-\Delta_D)^{-1} \partial_t z_n(T) )\|_{L^2(\Omega)}
		\| \nabla \partial_t w_n(T)\|_{L^2(\Omega)} 
		\\
		& 
		\leq 
		\alpha^2 \| \partial_t z_n(T)\|_{H^{-1}(\Omega)} 
		\left( \int_\Omega A_n \nabla \partial_t w_n(T) \cdot \nabla \partial_t w_n(T) \, dx \right)^{1/2}, 
	\end{align*}
	so that the convergence in \eqref{Estimates-on-w-n} yields that the sequence $(\partial_t z_n(T))_{n \in \N}$ strongly converges to $0$ in $H^{-1}(\Omega)$, that is,  the sequence $(\partial_t y_n(T))_{n \in \N}$ strongly converges to $\partial_t y(T)$ in $H^{-1}(\Omega)$. In particular, since we also know from \eqref{Energy-a-n} that the sequence $(\partial_t y_n(T))_{n \in \N}$ is bounded in $L^2(\Omega)$, we immediately have that the sequence $(\partial_t y_n(T))$ weakly converges to $\partial_t y(T)$ in $L^2(\Omega)$ as $n \to \infty$. 
	\medskip
	
	{\it Strong convergence in $L^2(0,T; H^1_0(\Omega)) \cap H^1(0,T; L^2(\Omega))$.} Since we already have that the sequence $(y_n)_{n \in \N}$ converges to $y$ weakly in $L^2(0,T; H^1_0(\Omega)) \cap H^1(0,T; L^2(\Omega))$ as $n \to \infty$, we only have to check that the sequence of the $L^2(0,T; H^1_0(\Omega)) \cap H^1(0,T; L^2(\Omega))$-norms of $y_n$ converges to the $L^2(0,T; H^1_0(\Omega)) \cap H^1(0,T; L^2(\Omega))$-norm of $y$.
	
	In order to do so, the energy identities \eqref{Energy-a} and \eqref{Energy-a-n} will strongly help us. Indeed, let us first remark that since $\nabla y_0 \in L^2(\Omega)$ and the sequence $A_n$ is weakly-$\star$ convergent to $A$ in $L^\infty(\Omega)$, 
	$$
		\lim_{n \to \infty} \left( \int_\Omega A_n \nabla y_0 \cdot \nabla y_0\, \ud x \right)
		=
		\int_\Omega A \nabla y_0 \cdot \nabla y_0\, \ud x.
	$$
	It then follows that from the energy identities \eqref{Energy-a} and \eqref{Energy-a-n}
	\begin{multline}
		\label{Convergence-of-the-energy}
		\lim_{n \to \infty} \sup_{t \in [0,T]} 
			\left( 
				\int_\Omega \left( |\partial_t y_n(t,x)|^2 + A_n \nabla y_n(t,x) \cdot \nabla y_n(t,x) \right) \, \ud x 
				\right.
				\\
				\left. 
				- 
				\int_\Omega \left( |\partial_t y(t,x)|^2 + A \nabla y(t,x) \cdot \nabla y(t,x) \right) \, \ud x
			\right) 
		= 0.
	\end{multline}

	We then show that the sequence $(\partial_t y_n)_{n \in \N}$ strongly converges in $L^2((0,T) \times \Omega)$ to $\partial_t y$. Here, the main ingredients are the following identities (sometimes called equipartition of the energy) and obtained after multiplying \eqref{Wave-eq-a} by $y$, respectively \eqref{Wave-eq-a-n} by $y_n$:
	\begin{align}
		\label{Equipartition-Energy-y}
		&
		\int_\Omega \partial_t y(t, x) y(t,x) \, \ud x \Big|_0^T 
		- 
		\int_0^T \int_\Omega |\partial_t y(t, x)|^2\, \ud x \ud t 
		+ 
		\int_0^T \int_\Omega A(x) \nabla y(t, x) \cdot \nabla y(t, x) \, \ud x \ud t 
		= 0,
		\\
		&
		\int_\Omega \partial_t y_n(t, x) y_n(t,x) \, \ud x \Big|_0^T 
		- 
		\int_0^T \int_\Omega |\partial_t y_n(t, x)|^2\, \ud x \ud t 
		+ 
		\int_0^T \int_\Omega A_n(x) \nabla y_n(t, x) \cdot \nabla y_n(t, x) \, \ud x \ud t 
		= 0.
		\notag
	\end{align}
	Accordingly, 
	\begin{align*}
		&
		2 \int_0^T \int_\Omega |\partial_t y_n(t, x)|^2\, \ud x \ud t 
		- 
		2 \int_0^T \int_\Omega |\partial_t y(t, x)|^2\, \ud x \ud t 
		\\
		&
		 =  
		\int_\Omega \partial_t y_n(t, x) y_n(t,x) \, \ud x \Big|_0^T
		- 
		\int_\Omega \partial_t y(t, x) y(t,x) \, \ud x \Big|_0^T 
		\\
		& 
		+  
		\int_0^T \int_\Omega \left( |\partial_t y_n(t,x)|^2 + A_n \nabla y_n(t,x) \cdot \nabla y_n(t,x) \right) \, \ud x\ud t
		- 
		\int_0^T \int_\Omega \left( |\partial_t y(t,x)|^2 + A \nabla y(t,x) \cdot \nabla y(t,x) \right) \, \ud x\ud t
	\end{align*}
	Since $(y_n, \partial_t y_n)\Big|_{t= 0} =  (y, \partial_t y)\Big|_{t= 0}$, and the sequence $(\partial_t y_n(T))_{n \in \N}$ weakly converges to $\partial_t y(T)$ in $L^2(\Omega)$ and the sequence $(y_n(T))_{n \in \N}$ strongly converge to $y(T)$ in $L^2(\Omega)$, in view of the convergence \eqref{Convergence-of-the-energy}, we get that 
	$$
		\lim_{n \to \infty}  \int_0^T \int_\Omega |\partial_t y_n(t, x)|^2\, \ud x \ud t 
		=
		\int_0^T \int_\Omega |\partial_t y(t, x)|^2\, \ud x \ud t, 
	$$
	which of course entails the strong convergence of the sequence $(\partial_t y_n)_{n \in \N}$ to $\partial_t y$ as $n \to \infty$ in $L^2((0,T) \times \Omega)$. Note that, combined with \eqref{Convergence-of-the-energy}, we also get 
	\begin{equation}
		\label{Convergence-a-n-nabla-n}
		\lim_{n \to \infty} \int_0^T 
			\int_\Omega A_n(x) \nabla y_n(t,x) \cdot \nabla y_n(t,x) \, \ud x \ud t 
			=
			\int_0^T \int_\Omega A(x) \nabla y(t,x) \cdot \nabla y(t,x)  \, \ud x \ud t.
	\end{equation}
	
	Now, we focus on the convergence of the sequence $(y_n)_{n \in \N}$ in $L^2(0,T; H^1_0(\Omega))$. To start with, multiplying the equation \eqref{Wave-eq-a-n} by $y$, we get for all $n \in \N$, 
	$$
		\int_\Omega \partial_t y_n(t, x) y(t,x) \, \ud x \Big|_0^T 
		- 
		\int_0^T \int_\Omega \partial_t y_n(t, x) \partial_t y(t, x) \, \ud x \ud t 
		+ 
		\int_0^T \int_\Omega A_n(x) \nabla y_n(t, x) \cdot \nabla y(t, x) \, \ud x \ud t 
		= 0. 
	$$
	Since we can pass to the limit in the first two terms (as $(\partial_t y_n(T))_{n \in \N}$ weakly converges to $\partial_t y(T)$ in $L^2(\Omega)$ and $(\partial_t y_n)_{n \in \N}$ strongly converges to $\partial_t y$ in $L^2((0,T) \times \Omega)$), we have that 
	$$
		\lim_{n \to \infty} \int_0^T \int_\Omega A_n(x) \nabla y_n(t, x) \cdot \nabla y(t, x) \, \ud x \ud t 
		= 
		\int_0^T \int_\Omega |\partial_t y(t, x)|^2 \, \ud x \ud t 
		- 
		\int_\Omega \partial_t y(t, x) y(t,x) \, \ud x \Big|_0^T.
	$$
	Using then the equipartition of the energy \eqref{Equipartition-Energy-y} for $y$, we deduce that 
	\begin{equation}
		\label{Convergence-a-n-nabla-n-y}
		\lim_{n \to \infty} \int_0^T \int_\Omega A_n(x) \nabla y_n(t, x) \cdot \nabla y(t, x) \, \ud x \ud t 
		= 
		\int_0^T \int_\Omega A(x) \nabla y(t, x) \cdot \nabla y(t, x) \, \ud x \ud t.
	\end{equation}
	We then write that 
	\begin{align*}
		\frac{1}{\alpha} \| \nabla (y_n- y)\|_{L^2((0,T) \times \Omega)}^2
		&
		\leq
		\int_0^T \int_\Omega A_n(x) \nabla (y_n (t,x)- y(t,x) ) \cdot \nabla (y_n(t,x) - y(t,x)) \,  \ud x \ud t 
		\\
		&
		\hspace{-2cm} 
		\leq 
		\int_0^T \int_\Omega A_n(x) \nabla y_n (t,x) \cdot \nabla y_n (t,x) \,  \ud x \ud t 
		- 
		2\int_0^T \int_\Omega A_n(x) \nabla y_n(t, x) \cdot \nabla y(t, x) \, \ud x \ud t 
		\\
		&
		\hspace{-1.8cm} 
		+ 
		\int_0^T \int_\Omega A_n(x) \nabla y(t,x)  \cdot \nabla y(t,x)  \,  \ud x \ud t, 
	\end{align*}
	and, from \eqref{Convergence-a-n-nabla-n} and \eqref{Convergence-a-n-nabla-n-y}, we deduce that 
	$$
		\lim_{n \to \infty}  \| \nabla (y_n- y)\|_{L^2((0,T) \times \Omega)}^2 = 0, 
	$$
	i.e. that $y_n$ strongly converges to $y$ in $L^2(0,T; H^1_0(\Omega))$.
    \medskip
    
    {\it Strong convergence of $(y_n(T), \partial_t y_n(T))$ in $H^1_0(\Omega)\times L^2(\Omega)$.} Note that we have already proved that the sequence $(y_n(T), \partial_t y_n(T))_{n \in \N}$ weakly converges to $(y(T), \partial_t y(T))$ in $H^1_0(\Omega)\times L^2(\Omega)$. Here again, to get the strong convergence in $H^1_0(\Omega)\times L^2(\Omega)$, the idea is to use the energy identity and the convergence \eqref{Convergence-of-the-energy}. However, since the energy of the solutions of \eqref{Wave-eq-a-n} involves the matrix $A_n$, we should again be careful. We start by proving that the sequence $(A_n \nabla y_n(T))_{n \in \N}$ converges weakly in $L^2(\Omega)$ towards $A \nabla y(T)$. Indeed, $(A_n)_{n\in \N}$ strongly  converges to $A$ in $L^1(\Omega)$ and is uniformly bounded. Consequently,  $(A_n)_{n\in \N}$ strongly  converges to $A$ in $L^2(\Omega)$. Since the sequence $(\nabla y_n(T))_{n \in \N}$ converges weakly in $L^2(\Omega)$ to $\nabla y(T)$ at time $T$, we deduce that the sequence $(A_n \nabla y_n(T))_{n \in \N}$ converges weakly in $L^1(\Omega)$ towards $A \nabla y(T)$. Since this sequence is bounded in $L^2(\Omega)$, the sequence $(A_n \nabla y_n(T))_{n \in \N}$ actually converges to $A \nabla y(T)$ weakly in $L^2(\Omega)$. 

    To show the strong convergence of $(y_n(T), \partial_t y_n(T))_{n \in \N}$ to $(y(T), \partial_t y(T))$ in $H^1_0(\Omega) \times L^2(\Omega)$, we write:
    \begin{align}
        & \frac{1}{\alpha} \| \nabla y_n(T) - y(T)\|_{L^2(\Omega)}^2 
        +
        \| \partial_t y_n(T) - \partial_t y(T)\|_{L^2(\Omega)}^2
        \notag
        \\
        &\leq 
        \int_\Omega \left( A_n \nabla (y_n(T) - y(T)) \cdot \nabla (y_n(T) - y(T)) + |\partial_t y_n(T) - \partial_t y(T)|^2 \right)\ud x
        \notag
        \\
        & \leq 
    \int_\Omega \left( A_n \nabla y_n(T) \cdot \nabla y_n(T) + |\partial_t y_n(T)|^2 \right)\ud x
    - 
    2
    \int_\Omega \left( A_n \nabla y_n(T) \cdot \nabla y(T) + \partial_t y_n(T) \partial_t y(T) \right)\ud x
    \label{towards-conv-at-time-T}
   \\
   & \qquad + 
    \int_\Omega \left( A_n \nabla y(T) \cdot \nabla y(T) + |\partial_t y(T)|^2 \right)\ud x
    \notag
    \end{align}
    Lebesgue's dominated convergence theorem implies that
    $$
        \lim_{n \to \infty} \int_\Omega \left( A_n \nabla y(T) \cdot \nabla y(T) 
         + |\partial_t y(T)|^2 \right)\ud x
        =
        \int_\Omega \left( A \nabla y(T) \cdot \nabla y(T) 
         + |\partial_t y(T)|^2 \right)\ud x.
    $$
    The weak convergence of the sequence $(A_n \nabla y_n(T), \partial_t y_n(T))_{n \in \N}$ to $(A \nabla y(T), \partial_t y(T))$ in $ L^2(\Omega)\times L^2(\Omega)$ implies that 
    $$
        \lim_{n \to \infty} 
        \int_\Omega \left( A_n \nabla y_n(T) \cdot \nabla y(T) + \partial_t y_n(T) \partial_t y(T) \right)\ud x
        = 
        \int_\Omega \left( A \nabla y(T) \cdot \nabla y(T) 
         + |\partial_t y(T)|^2 \right)\ud x.  
    $$
    Combining the last two convergences with the convergence \eqref{Convergence-of-the-energy}, we deduce from \eqref{towards-conv-at-time-T} that the sequence $(y_n(T), \partial_t y_n(T))_{n \in \N}$ strongly converges to $(y(T), \partial_t y(T))$ in $H^1_0(\Omega) \times L^2(\Omega)$.
\end{proof}

We also introduce a corollary about the convergence of the normal trace that will be useful in the following:
\begin{corollary}
	\label{Cor-Conv-Normal-Trace}
	Under the assumptions of Theorem \ref{Thm-Regularization}. Let $\Gamma$ be a non-empty smooth open subset of $\partial \Omega$ and assume that, for all $n \in \N$, $A_n \in W^{1,\infty}(\Omega; \R^{d\times d})$ and  there exists a neighborhood $\mathcal{V}$ of $\Gamma$ in $\overline\Omega$ such that the sequence $(A_n)_{n \in \N}$ is uniformly bounded in $W^{1, \infty} (\mathcal{V};\R^{d\times d})$. Then the sequence $(\partial_\nu y_n|_{(0,T)\times \Gamma})_{n \in \N}$ is strongly convergent to $\partial_\nu y|_{(0,T)\times \Gamma}$ in $L^2((0,T)\times \Gamma)$, implying in particular that $\partial_\nu y|_{(0,T)\times \Gamma} \in L^2((0,T)\times \Gamma)$.
\end{corollary}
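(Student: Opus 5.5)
The plan is to use a localized Rellich-type multiplier identity near $\Gamma$. It gives a uniform $L^2$ bound on the normal traces, and then convergence of their weighted norms. Weak convergence plus convergence of norms yields strong convergence.

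First, by the uniform $W^{1,\infty}(\mathcal{V})$ bound and a.e. convergence, Arzelà--Ascoli gives $A_n \to A$ uniformly on compact subsets of $\mathcal{V}$ (shrinking $\mathcal{V}$ slightly if needed), $A \in W^{1,\infty}(\mathcal{V})$, and $\nabla A_n \rightharpoonup \nabla A$ weakly-$\star$ in $L^\infty(\mathcal{V})$. Choose a $\mathscr{C}^1$ vector field $h$ supported in $\mathcal{V}$ with $h = \nu$ on $\Gamma$ and $h\cdot \nu \geq 0$ on $\partial\Omega$. Since $A_n$ is Lipschitz on the whole of $\Omega$, $y_n$ is smooth enough, for regular data and then by density, to multiply \eqref{Wave-eq-a-n} by $h\cdot\nabla y_n$. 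Using $y_n = 0$ on $\partial\Omega$ (so $\nabla y_n = \partial_\nu y_n\, \nu$), this gives
\begin{equation*}
\frac12\int_0^T\!\!\int_{\partial\Omega} (A_n\nu\cdot\nu)(h\cdot\nu)|\partial_\nu y_n|^2 \, \ud\sigma\,\ud t
= \Big[\int_\Omega \partial_t y_n\, h\cdot\nabla y_n \,\ud x\Big]_0^T + \int_0^T\!\!\int_\Omega \mathcal{Q}_n(\partial_t y_n,\nabla y_n)\,\ud x\,\ud t .
\end{equation*}
Here $\mathcal{Q}_n$ is a quadratic form in $(\partial_t y_n,\nabla y_n)$ whose coefficients are built from $A_n$, $\nabla A_n$, $\nabla h$, $\div h$, all supported in $\mathcal{V}$ and uniformly bounded. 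With the energy identity \eqref{Energy-a-n} and $A_n\nu\cdot\nu\geq 1/\alpha$, this yields a uniform bound on $\partial_\nu y_n$ in $L^2((0,T)\times\Gamma)$. We then extract a weakly convergent subsequence with limit $g$.

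Second, we pass to the limit in the right-hand side. The time-boundary terms converge by the strong convergence of $(y_n(T),\partial_t y_n(T))$ in Theorem \ref{Thm-Regularization}. For the interior terms, Theorem \ref{Thm-Regularization} gives $\partial_t y_n\to\partial_t y$ and $\nabla y_n\to\nabla y$ strongly in $L^2$, so all products like $\nabla y_n\otimes\nabla y_n$ converge strongly in $L^1((0,T)\times\Omega)$. Coefficients involving $A_n$ converge a.e. and boundedly, and those involving $\nabla A_n$ converge weakly-$\star$ in $L^\infty$; a strong-$L^1$ times weak-$\star$-$L^\infty$ product converges. Hence the right-hand side tends to the same expression $R(y)$ written for $y$ and $A$. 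The left-hand weight $(A_n\nu\cdot\nu)(h\cdot\nu)$ converges uniformly on $\Gamma$. We can thus write $\lim \int\!\!\int w|\partial_\nu y_n|^2 = 2R(y)$ with the fixed positive weight $w=(A\nu\cdot\nu)(h\cdot\nu)$, up to errors $o(1)\|\partial_\nu y_n\|^2$ that vanish by the uniform bound.

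Third, we identify $g$ and conclude. Testing the weak formulation of \eqref{Wave-eq-a-n} against smooth $\varphi$ supported near $\Gamma$ expresses $\int\!\!\int_\Gamma (A_n\nu\cdot\nu)\partial_\nu y_n\,\varphi$ through interior integrals. These converge by Theorem \ref{Thm-Regularization}, so $g$ is the normal derivative of $y$ in the sense of Definition \ref{Def-normal-derivative-y}, independently of the subsequence. It remains to show $\int\!\!\int_\Gamma w|g|^2 = 2R(y)$. Weak lower semicontinuity gives only $\leq$, so this is the main obstacle.

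To obtain the reverse inequality, I would apply the Cauchy version of the argument. Apply the identity above to the difference $y_n-y_m$ in its bilinear, polarized form, i.e. with the cross multipliers $h\cdot\nabla y_m$ on the equation of $y_n$ and vice versa. The mismatched terms involve $(A_n-A_m)$ and $\nabla(A_n-A_m)$, only inside $\mathcal{V}$. They are paired with strongly $L^1$-convergent gradient products, or with boundary traces where $A_n-A_m\to0$ uniformly. These terms tend to $0$ as $n,m\to\infty$, so $\int\!\!\int_\Gamma w|\partial_\nu y_n-\partial_\nu y_m|^2\to0$. The sequence is therefore Cauchy in $L^2((0,T)\times\Gamma)$, its limit equals $g=\partial_\nu y$, and in particular $\partial_\nu y\in L^2((0,T)\times\Gamma)$. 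The delicate point is checking that the cross terms in the polarized identity only involve $\nabla A_n$ against products of the form $\nabla y_n\otimes\nabla y_m$, with no second derivatives of $y$. This should follow by writing $\div(A_n\nabla y_n)\,h\cdot\nabla y_m$ and integrating by parts once, symmetrically in $n$ and $m$.
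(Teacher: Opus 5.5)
Your first three steps follow the paper's proof: the localized multiplier $h\cdot\nabla y_n$, the uniform trace bound, passing to the limit in the right-hand side with Theorem \ref{Thm-Regularization}, and identifying the weak limit through Definition \ref{Def-normal-derivative-y}. You also correctly isolate the missing piece, namely the identity $\int_0^T\!\int_{\partial\Omega} w|\partial_\nu y|^2\,\ud\sigma\,\ud t = 2R(y)$ for the limit solution.

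Your way of closing this step fails. Multiply the equation of $y_n$ by $h\cdot\nabla y_m$ and the equation of $y_m$ by $h\cdot\nabla y_n$. The second-order contributions are
\[
\int_0^T\!\!\int_\Omega h_k\big((A_n)_{ij}\,\partial_j y_n\,\partial_{ik}y_m+(A_m)_{ij}\,\partial_j y_m\,\partial_{ik}y_n\big)\,\ud x\,\ud t .
\]
Only when $A_n=A_m$ does symmetry turn this into the total derivative $h_k (A_n)_{ij}\,\partial_k(\partial_j y_n\,\partial_i y_m)$. In general the term
\[
\int_0^T\!\!\int_\Omega h_k\,(A_m-A_n)_{ij}\,\partial_j y_m\,\partial_{ik}y_n\,\ud x\,\ud t
\]
remains. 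Integrating by parts once more only moves the second derivative onto $y_m$. For energy-class data $\nabla^2 y_n$ is not in $L^2$, even near $\Gamma$. This term carries one derivative more than the energy controls, so it cannot be bounded by $\|A_n-A_m\|_{L^\infty(\mathcal V)}$ times the energy. Your claim that the mismatched terms meet only gradient products, and hence vanish, is therefore false. Rescuing the Cauchy argument would need an extra regularization step giving $H^2$ bounds near $\Gamma$ uniformly in $n$, which your proposal does not provide.

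The paper closes the gap differently, and you already have the ingredient: your Arzel\`a--Ascoli step gives $A\in W^{1,\infty}(\mathcal V)$. Take data in $\mathscr{D}(\mathscr{A})\times H^1_0(\Omega)$, where $\mathscr{D}(\mathscr{A})=\{u\in H^1_0(\Omega),\ \div(A\nabla u)\in L^2(\Omega)\}$. Then the limit solution satisfies $y\in\mathscr{C}^0([0,T];\mathscr{D}(\mathscr{A}))\cap\mathscr{C}^1([0,T];H^1_0(\Omega))$. Local elliptic regularity near $\Gamma$, where $A$ is Lipschitz, then gives $y\in\mathscr{C}^0([0,T];H^2)$ on the support of $h$.

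This justifies the multiplier computation for $y$ itself. Since every term on the right-hand side is continuous in the energy norm, density extends $\frac12\int_0^T\!\int_{\partial\Omega} w|\partial_\nu y|^2\,\ud\sigma\,\ud t=R(y)$ to all data in $H^1_0(\Omega)\times L^2(\Omega)$. Combined with your step two, the weighted norms of $\partial_\nu y_n$ converge to that of $\partial_\nu y$. Weak convergence then upgrades to strong convergence, with no Cauchy argument needed.
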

Note that before proving Corollary \ref{Cor-Conv-Normal-Trace}, it is important to know what is the definition of $\partial_\nu y$ for $y$ solution of \eqref{Wave-eq-a}. We define it in a weak manner as a natural extension of the formulas we would get for smooth coefficients:
\begin{definition}
	\label{Def-normal-derivative-y}
	Let $\Omega$ be a smooth bounded domain of $\R^{d}$, $d \geq 1$, and let $T>0$.
	Let $A \in L^\infty(\Omega; \R^{d\times d})$ taking value in the set of symmetric matrices such that \eqref{Ass-WP-a} holds for some $\alpha>0$. Then for $(y_0, y_1) \in H^1_0(\Omega) \times L^2(\Omega)$, we define $\partial_\nu y$ as the unique element of $(H^{1/2}(0,T; L^2(\partial \Omega)) \cap L^2(0,T; H^{1/2}(\partial\Omega)))'$ such that for all $\varphi \in H^1(0,T;L^2(\Omega)) \cap L^2(0,T; H^1(\Omega))$, 
	\begin{multline}
		\label{Def-Weak-Derivative}
		\langle \partial_\nu y, \varphi \rangle_{
        (H^{1/2}(0,T; L^2(\partial \Omega)) \cap L^2(0,T; H^{1/2}(\partial\Omega)))', 
        (H^{1/2}(0,T; L^2(\partial \Omega)) \cap L^2(0,T; H^{1/2}(\partial\Omega)))}
		\\
		= 
		\int_\Omega \partial_t y(t, \cdot) \varphi(t, \cdot)\, \ud x\Big|_0^T
		- \int_0^T \int_\Omega \partial_t y \partial_t \varphi \, \ud x \, \ud t
		+ 
		\int_0^T \int_\Omega A \nabla y \cdot \nabla \varphi\, \ud x\, \ud t.
	\end{multline}
\end{definition}
In Definition \ref{Def-normal-derivative-y}, we used that the set of traces of functions in $H^1(0,T;L^2(\Omega)) \cap L^2(0,T; H^1(\Omega))$ exactly is  $ H^{1/2}(0,T; L^2(\partial \Omega)) \cap L^2(0,T; H^{1/2}(\partial\Omega))$ (see \cite[Chapter 4, Section 2]{Lions-Magenes-II}).

We emphasize that the definition \eqref{Def-normal-derivative-y} is independent from the lifting $\varphi_g$ used to lift $g$: Indeed, if $\varphi_g^a$ and $\varphi_g^b$ are two liftings of $g$, then $ \varphi_g^a - \varphi_g^b \in  H^1(0,T;L^2(\Omega)) \cap L^2(0,T; H^1_0(\Omega))$, and by the equation \eqref{Wave-eq-a}, 
$$
	0= 
		\int_\Omega \partial_t y(t, \cdot) (\varphi_g^a - \varphi_g^b)(t, \cdot)\, \ud x\Big|_0^T - \int_0^T \int_\Omega \partial_t y \partial_t (\varphi_g^a - \varphi_g^b) \, \ud x \, \ud t
		+ 
		\int_0^T \int_\Omega A \nabla y \cdot \nabla (\varphi_g^a - \varphi_g^b)\, \ud x\, \ud t.
$$
\begin{proof}[Proof of Corollary \ref{Cor-Conv-Normal-Trace}]
	The proof is based on the classical multiplier method, which consists in multiplying  \eqref{Wave-eq-a-n} by  $m \cdot \nabla y_n$, for a smooth $(\mathscr{C}^1)$ vector field $m = m(x)$ compactly supported in $\mathcal{V}$ such that $m \cdot \nu = 1$ in a neighborhood of $\Gamma$  and $m \cdot \nu \geq 0$ on $\partial \Omega$ (such a vector field exists, see for instance \cite[Chapter 1, Lemma 3.1]{lionsHUM}). Indeed, the computations give that for all $n \in \N$, and $(y_0, y_1) \in H^2 \cap H^1_0(\Omega) \times H^1_0(\Omega)$, the solution $y_n$ of \eqref{Wave-eq-a-n} satisfies
	\begin{align*}
		0 
		&= \int_0^T \int_\Omega (\partial_{tt} y_n  - \div (A_n \nabla y_n)) (m \cdot \nabla y_n)\, \ud x \, \ud t
		\\
		& = 
		\int_\Omega \partial_t y_n \, m \cdot \nabla y_n \, \ud x\Big|_0^T
		+ 
		\frac{1}{2} \int_0^T \int_\Omega \div(m) |\partial_t y_n |^2 \, \ud x \, \ud t
		\\
		& \qquad - 
		\int_0^T \int_{\partial\Omega} |\partial_\nu y_n |^2 (A _n\nu \cdot \nu) (m\cdot \nu) \, \ud \sigma\, \ud t
		+
		\int_0^T \int_\Omega A_n \nabla y_n \cdot \nabla (m \cdot \nabla y_n) \, \ud x \, \ud t.
	\end{align*}
	Note that here, we used that $A_n \in W^{1,\infty}(\Omega; \R^{d\times d})$ to get that $H^2 \cap H^1_0(\Omega)$ is the domain of the operator $- \div (A_n \nabla \cdot)$ with Dirichlet boundary condition in $L^2(\Omega)$, and thus that all the computations above are justified.

	Using the fact that $A_n$ is symmetric, for $z \in H^2(\Omega) \cap H^1_0(\Omega)$,
	\begin{align*}
		& \int_\Omega A_n \nabla z \cdot \nabla (m \cdot \nabla z) \, \ud x
		= 
		\int_\Omega A_n \nabla (m \cdot \nabla z) \cdot \nabla z  \, \ud x
		\\ 
		&
		= 
		\frac{1}{2} \int_\Omega \left( A_n  \nabla z \cdot \nabla (m \cdot \nabla z) + A_n \nabla (m \cdot \nabla z) \cdot \nabla z  \right) \, \ud x
		\\ 
		& 
		= 
		\frac{1}{2} \sum_{i,j,k= 1}^d \int_\Omega a_{i,j,n} \left(  \partial_i z  \partial_j (m_k \partial_k z) + \partial_i (m_k \partial_k z)  \partial_j z  \right) \, \ud x
		\\ 
		& 
		= 
		\frac{1}{2} \sum_{i,j,k= 1}^d \int_\Omega a_{i,j,n} m_k \partial_k  \left(  \partial_i z  \partial_j  z  \right) \, \ud x 
		+ 
		\frac{1}{2} \sum_{i,j,k= 1}^d \int_\Omega a_{i,j,n} \left(  \partial_i z \partial_k z \partial_j m_k  + \partial_i m_k \partial_k z  \partial_j z  \right) \, \ud x
		\\
		& = 
		- \frac{1}{2} \int_\Omega (\div (m) A_n \nabla z \cdot \nabla z + m \cdot \nabla A_n \nabla z \cdot \nabla z ) \, \ud x 
		+ 
		\frac{1}{2} \int_{\partial \Omega} (\partial_\nu z)^2 (A_n \nu \cdot \nu) (m\cdot \nu)\, \ud\sigma \,\ud t
		\\
		& \qquad + 
		 \sum_{i,j,k= 1}^d \int_\Omega a_{i,j,n}  \partial_i z \partial_k z \partial_j m_k \, \ud x.
	\end{align*}
	It follows that, for all $y_n$ solutions of \eqref{Wave-eq-a-n} with data $(y_0, y_1) \in H^2(\Omega) \cap H^1_0(\Omega) \times H^1_0(\Omega)$, 
	\begin{multline}
		\label{Multiplier-Identity-m-a-n}
		\frac{1}{2}\int_0^T \int_{\partial\Omega} |\partial_\nu y_n |^2 (A _n\nu \cdot \nu) (m\cdot \nu) \, \ud \sigma\, \ud t
		 = 
		\int_\Omega \partial_t y_n \, m \cdot \nabla y_n \, \ud x\Big|_0^T
		+ 
		\frac{1}{2} \int_0^T \int_\Omega \div(m) |\partial_t y_n |^2 \, \ud x \, \ud t
		\\
		- \frac{1}{2} \int_0^T \int_\Omega (\div (m) A_n \nabla y_n \cdot \nabla y_n + m \cdot \nabla A_n \nabla y_n \cdot \nabla y_n ) \, \ud x \, \ud t
		\\
		+ 
		\sum_{i,j,k= 1}^d \int_0^T \int_\Omega a_{i,j,n}  \partial_i y_n \partial_k y_n \partial_j m_k \, \ud x\, \ud t.	
	\end{multline}
	Also note that by density of $H^2(\Omega)\cap H^1_0(\Omega) \times H^1_0(\Omega)$ in $H^1_0(\Omega) \times L^2(\Omega)$, and the fact that all the quantities in the right-hand side depend continuously of the initial data for the topology of $H^1_0(\Omega) \times L^2(\Omega)$, we easily deduce that this identity holds true for initial data in $H^1_0(\Omega) \times L^2(\Omega)$.

	We now show a similar identity for solutions of the limit equation \eqref{Wave-eq-a}. Before doing that, let us remark that the assumptions imply that $(m\cdot\nabla) A \in L^\infty(\mathcal{V};\R^{d\times d})$. We then establish a similar identity as \eqref{Multiplier-Identity-m-a-n} for initial data $(y_0 , y_1) \in \mathcal{D}(\mathscr{A}) \times H^1_0(\Omega)$, where $\mathscr{D}(\mathscr{A}) = \{ y \in H^1_0(\Omega), \, \div ( A \nabla y ) \in L^2(\Omega)\}$, endowed with the norm $\|  \div ( A \nabla \cdot )\|_{L^2(\Omega)} + \|\cdot \|_{L^2(\Omega)}$. In such case, classical semigroup theory implies that the solution $y$ of \eqref{Wave-eq-a} satisfies $y \in \mathscr{C}^0([0,T]; \mathscr{D}(\mathscr{A})) \cap \mathscr{C}^1([0,T]; H^1_0(\Omega))$. Besides, since $A$ is $W^{1,\infty}(\mathcal{V})$, local elliptic regularity results imply that $y \in \mathscr{C}^0([0,T]; H^2 (\Supp(m)))$. It follows that $m \cdot \nabla y$ belongs to $L^2(0,T; H^1(\Omega)) \cap H^1(0,T; H^1(\Omega))$. We can thus do the same computations as for \eqref{Multiplier-Identity-m-a-n}, and we obtain
	\begin{multline}
		\label{Multiplier-Identity-m-a}
		\frac{1}{2}\int_0^T \int_{\partial\Omega} |\partial_\nu y |^2 (A \nu \cdot \nu) (m\cdot \nu) \, \ud \sigma\, \ud t
		 = 
		\int_\Omega \partial_t y \, m \cdot \nabla y \, \ud x\Big|_0^T
		+ 
		\frac{1}{2} \int_0^T \int_\Omega \div(m) |\partial_t y |^2 \, \ud x \, \ud t
		\\
		- \frac{1}{2} \int_0^T \int_\Omega (\div (m) A \nabla y \cdot \nabla y + m \cdot \nabla A \nabla y \cdot \nabla y ) \, \ud x \, \ud t
		+ 
		\sum_{i,j,k= 1}^d \int_0^T \int_\Omega a_{i,j}  \partial_i y \partial_k y \partial_j m_k \, \ud x\, \ud t.	
	\end{multline}
	Here again, we can use the density of $\mathscr{D}(\mathscr{A})\times H^1_0(\Omega)$ in $H^1_0(\Omega) \times L^2(\Omega)$, and the fact that all the quantities in the right-hand side depend continuously of the initial data for the topology of $H^1_0(\Omega) \times L^2(\Omega)$, to deduce that this identity holds true for all initial data in $H^1_0(\Omega) \times L^2(\Omega)$.

	Let us now fix $(y_0, y_1) \in H^1_0(\Omega) \times L^2(\Omega)$. Since the right-hand side of the identity \eqref{Multiplier-Identity-m-a-n} is bounded uniformly in $n$ from the convergences in Theorem \ref{Thm-Regularization}, the sequence $(m\cdot \nu \partial_\nu y_n|_{(0,T) \times \partial\Omega})_{n \in \N}$ is uniformly bounded in $L^2((0,T) \times \partial \Omega)$ and compactly supported in $\Supp \{ m\cdot \nu\} \cap \partial\Omega$, so that it weakly converges to some $\mu \in L^2((0,T)\times \Omega)$ compactly supported in $\Supp \{ m\cdot \nu\} \cap \partial\Omega$. 
	
	We then check that $\mu$ coincides with $m\cdot \nu \partial_\nu y|_{(0,T) \times \partial\Omega}$ for $y$ solution of \eqref{Wave-eq-a}. Writing the definition of $\partial_\nu y_n$ in \eqref{Def-Weak-Derivative}, and using the strong convergence of $(\partial_t y_n)_{n \in \N}$ and $(\nabla y_n)_{n \in \N}$ towards $\partial_t y$ and $\nabla y$ in $L^2((0,T) \times \Omega)$ and the weak-$\star$ $L^\infty(\Omega)$ convergence of $(A_n)_{n \in \N}$ towards $A$, we immediately get that $(\partial_\nu y_n|_{(0,T) \times \partial\Omega})_{n \in \N}$ converges to $\partial_\nu y|_{(0,T) \times \partial\Omega}$ in $(H^{1/2}(0,T; L^2(\partial \Omega)) \cap L^2(0,T; H^{1/2}(\partial\Omega)))'$. It thus follows immediately that $\mu$ coincides with $m\cdot \nu \partial_\nu y |_{(0,T) \times \Omega}$, implying in particular $\partial_\nu y|_{(0,T) \times \Gamma} \in L^2((0,T)\times \Gamma)$.
	
	It remains to check that the sequence $(m\cdot \nu \partial_\nu y_n|_{(0,T) \times \partial\Omega})_{n \in \N}$ converges strongly in $L^2((0,T)\times \partial\Omega)$ to $m\cdot \nu \partial_\nu y|_{(0,T) \times \partial\Omega}$. In order to do that, we simply check that we can pass to the limit in the identity \eqref{Multiplier-Identity-m-a-n} as $n\to \infty$, and recover the right hand side of \eqref{Multiplier-Identity-m-a}, thanks to the weak-$\star$ $W^{1, \infty}(\mathcal{V}; \R^{d\times d})$ convergence of $(A_n)_{n \in \N}$ towards $A$ and the strong convergences obtained in Theorem \ref{Thm-Regularization}.
\end{proof}

In the context of the plate equations, we can get a similar result, that we state below for spatial operators of the form $- \Delta (a \Delta \cdot )$ for sake of simplicity\footnote{A similar statement could be proven for operators $L$ of the form $L = \sum_{i,j,k,\ell} \partial_i \partial_j( a_{i, j,k,\ell} \partial_k \partial_\ell \cdot)$, in which $A = (a_{i,j,k,\ell})_{(i,j,k,\ell) \in \{1, \cdot, d\}^4}$ is a tensor of order $4$ such that the form $$ {\bf a}(u,v) = \int_\Omega \sum_{i,j,k,\ell} a_{i,j,k,\ell} \, \partial_{i} \partial_j u\,  \partial_k \partial_\ell v\, \ud x$$ is symmetric, coercive and bounded on $H^2_0(\Omega)$, but we intend to keep our exposition technically simple for readability.}
\begin{theorem}
	\label{Thm-Regularization-Plate}
	Let $\Omega$ be a smooth bounded domain of $\R^d$, $d \geq 1$, and let $T>0$.
	
	Let $a \in L^\infty(\Omega)$ and a sequence $(a_n)_{n \in \N} \in L^\infty(\Omega )$ such that, for some $\alpha >0$, 
	\begin{align*}
		&
		\text{a.e. } x \in \Omega, \quad \frac{1}{\alpha} \leq a(x) \leq \alpha, 
		\quad \text{ and } \quad 
		 \forall n \in \N, \, \text{a.e. } x \in \Omega, \quad \frac{1}{\alpha} \leq a_n(x) \leq \alpha, 
		\\
		\label{Convergence-a-n-a-plates}
		& (a_n) \underset{n \to \infty} \longrightarrow a \text{ in } L^1(\Omega), 
		\hbox{ and } 
		a_n \underset{n \to \infty} \longrightarrow a \text{ almost everywhere in } \Omega. 
	\end{align*}

	Then, for any $(y_0, y_1) \in H^2_0(\Omega) \times L^2(\Omega)$, denoting respectively by $y$ and $y_n$ the solutions of 
	\begin{equation}
		\label{Plate-eq-a}
		\left\{
			\begin{array}{ll}
				\partial_{tt} y + \Delta (a \Delta y) = 0 & \text{ in } (0,T) \times \Omega, 
				\\
				y = \partial_\nu y = 0 & \text{ on } (0,T) \times \partial \Omega, 
				\\ 
				(y, \partial_t y)\Big|_{t= 0} = (y_0, y_1) & \text{ in } \Omega,
			\end{array}
		\right.
	\end{equation}
	respectively, 
	\begin{equation}
		\label{Plate-eq-a-n}
		\left\{
			\begin{array}{ll}
				\partial_{tt} y_n + \Delta (a_n \Delta y_n) = 0 & \text{ in } (0,T) \times \Omega, 
				\\
				y_n = \partial_\nu y_n = 0 & \text{ on } (0,T) \times \partial \Omega, 
				\\ 
				(y_n, \partial_t y_n)\Big|_{t= 0} = (y_0, y_1) & \text{ in } \Omega,
			\end{array}
		\right.
	\end{equation}
	the sequence $(y_n)_{n \in \N}$ converges strongly to $y$ in $L^2(0,T; H^2_0(\Omega)) \cap H^1(0,T; L^2(\Omega))$ as $n \to \infty$.
	
	Besides, if for all $n \in \N$, $a_n \in W^{1, \infty}(\Omega)$ and if $\Gamma$ is a non-empty smooth open subset of $\partial \Omega$ and there exists a neighborhood $\mathcal{V}$ of $\Gamma$ in $\overline\Omega$ such that the sequence $(a_n)_{n \in \N}$ is uniformly bounded in $W^{1, \infty} (\mathcal{V})$, the sequence $(\Delta y_n|_{(0,T)\times \Gamma})_{n \in \N}$ is strongly convergent to $\Delta y|_{(0,T)\times \Gamma}$ in $L^2((0,T)\times \Gamma)$, implying in particular that $\Delta y|_{(0,T)\times \Gamma} \in L^2((0,T)\times \Gamma)$.
\end{theorem}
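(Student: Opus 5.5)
The plan is to transpose the proof of Theorem \ref{Thm-Regularization} to the plate setting, with $H^1_0(\Omega)$ replaced by $H^2_0(\Omega)$ and the form $\int A\nabla u\cdot\nabla v$ replaced by $\int a\,\Delta u\,\Delta v$. This form is coercive on $H^2_0(\Omega)$ because $\|\Delta u\|_{L^2}$ is an equivalent norm there.

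\textbf{Compactness and identification of the limit.} Energy conservation for \eqref{Plate-eq-a-n} makes $(y_n)$ bounded in $\mathscr{C}^0([0,T];H^2_0(\Omega))\cap\mathscr{C}^1([0,T];L^2(\Omega))$. Extracting a subsequence, $y_n$ converges weakly to some $\tilde y$. To identify $\tilde y$, set $z_n=y_n-y$, which solves
\[
\partial_{tt}z_n+\Delta(a_n\Delta z_n)=-\Delta((a_n-a)\Delta y),\qquad z_n(0)=\partial_t z_n(0)=0 .
\]
Let $w_n(t)\in H^2_0(\Omega)$ solve $\Delta(a_n\Delta w_n)=z_n$ with clamped conditions; this is well defined by Lax--Milgram on $H^2_0$. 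Multiplying the equation for $z_n$ by $\partial_t w_n$ gives
\[
\frac{d}{dt}\,\frac12\Big(\int a_n|\Delta\partial_t w_n|^2+\langle z_n,w_n\rangle\Big)=-\int (a_n-a)\,\Delta y\,\Delta\partial_t w_n .
\]
Here $\langle z_n,w_n\rangle=\int a_n|\Delta w_n|^2\ge0$ controls $z_n$ in $H^{-2}$, not in $L^2$. Gronwall then yields
\[
\sup_t\Big(\|\Delta\partial_t w_n\|_{L^2}^2+\|z_n\|_{H^{-2}}^2\Big)\le C\|(a_n-a)\Delta y\|_{L^2((0,T)\times\Omega)}^2,
\]
and the right-hand side tends to $0$ by dominated convergence, using a.e. convergence and the bound $|a_n-a|^2|\Delta y|^2\le4\alpha^2|\Delta y|^2$. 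Hence $z_n\to0$ in $\mathscr{C}^0([0,T];H^{-2})$, so $\tilde y=y$ and the whole sequence converges weakly. Interpolating with the uniform $H^2_0$ bound, $y_n(T)\to y(T)$ strongly in $L^2$. Likewise $\partial_t z_n(T)=\Delta(a_n\Delta\partial_t w_n(T))$ tends to $0$ in $H^{-2}$, so $\partial_t y_n(T)\rightharpoonup\partial_t y(T)$ weakly in $L^2$.

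\textbf{Strong convergence.} I will repeat the equipartition argument. By dominated convergence, $\int a_n|\Delta y_0|^2\to\int a|\Delta y_0|^2$, so the energies converge uniformly in $t$. Multiplying the equations by $y_n$ and by $y$ gives
\[
\Big[\int\partial_t y_n\,y_n\Big]_0^T-\iint|\partial_t y_n|^2+\iint a_n|\Delta y_n|^2=0 ,
\]
and the same identity for $y$. Combining these with energy convergence gives $\|\partial_t y_n\|_{L^2}\to\|\partial_t y\|_{L^2}$, hence $\partial_t y_n\to\partial_t y$ strongly, and also $\iint a_n|\Delta y_n|^2\to\iint a|\Delta y|^2$. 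Testing \eqref{Plate-eq-a-n} against $y$ gives $\iint a_n\Delta y_n\Delta y\to\iint a|\Delta y|^2$. Expanding $\iint a_n|\Delta(y_n-y)|^2$ into these three terms, plus $\iint a_n|\Delta y|^2$ which converges by dominated convergence, shows that $\Delta y_n\to\Delta y$ in $L^2((0,T)\times\Omega)$. This is strong convergence in $L^2(0,T;H^2_0)$.

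\textbf{Boundary trace.} I will mimic Corollary \ref{Cor-Conv-Normal-Trace}. Choose a $\mathscr{C}^2$ vector field $m$ supported in $\mathcal{V}$ with $m\cdot\nu=1$ near $\Gamma$ and $m\cdot\nu\ge0$ on $\partial\Omega$, and multiply by $m\cdot\nabla y_n$. Because $y_n=\partial_\nu y_n=0$, we have $\nabla y_n=0$ on $\partial\Omega$ and $\partial_\nu^2 y_n=\Delta y_n$ there. The resulting identity has the form
\[
\frac12\iint_{\partial\Omega}a_n|\Delta y_n|^2\,m\cdot\nu=\Big[\int\partial_t y_n\,m\cdot\nabla y_n\Big]_0^T+\frac12\iint\div(m)|\partial_t y_n|^2+\text{(interior terms)} .
\]
The interior terms are quadratic in $D^2y_n$ and $\nabla y_n$, with coefficients involving $a_n$, $m\cdot\nabla a_n$ and $Dm$; all of these are bounded in $L^\infty(\mathcal{V})$. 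This step needs $H^4$ regularity near $\Supp m$ for smooth data, and density to pass to energy data.

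\textbf{Main obstacle.} The delicate point is passing to the limit in the interior terms. The factor $m\cdot\nabla a_n$ converges only weakly-$\star$ in $L^\infty$, so its product with $|\Delta y_n|^2$ needs $|\Delta y_n|^2$ to converge strongly in $L^1$, which the previous step provides. The other factors converge a.e., and with strong $L^2$ convergence of $D^2y_n$ dominated convergence applies. The strong $L^2$ convergence of $D^2 y_n$ itself follows from the convergence in $H^2_0$ just obtained. Next, I need the analogous multiplier identity for the limit $y$. For this I will use local $H^4$ elliptic regularity where $a\in W^{1,\infty}$, together with a weak definition of the boundary trace $\Delta y$ analogous to \eqref{Def-Weak-Derivative}, which identifies the weak limit of $\sqrt{m\cdot\nu}\,\Delta y_n$. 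Finally, convergence of norms upgrades weak to strong convergence in $L^2((0,T)\times\Gamma)$, using $a_n\to a$ uniformly on $\Gamma$ by the uniform Lipschitz bound.
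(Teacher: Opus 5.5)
Your route is the one the paper intends. It leaves this proof as a transposition of Theorem \ref{Thm-Regularization} and Corollary \ref{Cor-Conv-Normal-Trace}, and your steps are the right ones: duality energy for $z_n$, equipartition, expansion of $\iint a_n|\Delta(y_n-y)|^2$, the multiplier $m\cdot\nabla y_n$ with $D^2y_n=\Delta y_n\,\nu\otimes\nu$ on $\partial\Omega$, and weak-$\star$ times strong-$L^1$ for $(m\cdot\nabla a_n)|\Delta y_n|^2$. Two statements need correcting, but neither affects the conclusion.

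\textbf{The $w_n$ identity.} Multiplying by $\partial_t w_n$, the second term is
$\int a_n\Delta z_n\,\Delta\partial_t w_n=\int z_n\,\Delta(a_n\Delta\partial_t w_n)=\int z_n\,\partial_t z_n$.
So the quantity being differentiated is $\frac12\int(a_n|\Delta\partial_t w_n|^2+|z_n|^2)\,\ud x$, the analogue of $E_{-1}$ in \eqref{Energy--1-plate}. It is not the expression with $\langle z_n,w_n\rangle$: the time derivative of $\frac12\langle z_n,w_n\rangle=\frac12\int a_n|\Delta w_n|^2$ is $\int z_n\,\partial_t w_n$, not $\int z_n\,\partial_t z_n$. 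The correct identity gives $z_n\to0$ in $\mathscr{C}^0([0,T];L^2(\Omega))$ directly, as in the wave case, so your interpolation step becomes unnecessary.

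\textbf{The regularity claim.} With $a_n$ and $a$ only $W^{1,\infty}$ near $\Gamma$, local $H^4$ regularity is not available; that would need control of second derivatives of $a$. What holds is $H^3$ near $\Supp m$, and that is all the multiplier identity needs. It gives $\Delta y\in H^1$, hence $|\Delta y|^2\in W^{1,1}$ with an $L^2$ boundary trace. This is the plate counterpart of the $H^2(\Supp m)$ regularity used for waves.

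\textbf{A step to make explicit.} The time-boundary term $\int\partial_t y_n(T)\,m\cdot\nabla y_n(T)\,\ud x$ passes to the limit because $\nabla y_n(T)\to\nabla y(T)$ strongly in $L^2$. This follows from the $L^2$ convergence of $y_n(T)$ together with the uniform $H^2_0$ bound.
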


The proof is left to the reader as it is a minor variation of the proof of Theorem \ref{Thm-Regularization} and Corollary \ref{Cor-Conv-Normal-Trace}.

\section{Multiplier methods} \label{MultSect}

\subsection{Preliminary result: how to regularize the coefficients?}

As our approach to prove Theorems \ref{MainThmOBS} and \ref{MainThmOBS-distributed} (respectively Theorem \ref{MainThmOBS-Plates}) is based on the regularization result of Theorem \ref{Thm-Regularization} (respectively Theorem \ref{Thm-Regularization-Plate}), it is obvious that a key element will be given by the way we regularize the coefficients $A$ for the wave equation \eqref{Wave-eq-intro} (respectively $a$ for the plate equation \eqref{Plate-eq-a}). 

The main point is that we can regularize these coefficients in such a way that the multiplier condition \eqref{ObsCond-a}, which is satisfied only in the sense of distributions for $A$ (resp. $a$), will be satisfied pointwise for the regularized versions of $A$. 

We start by recalling the following approximation result from \cite[Proposition 2.1]{Dehman-Erv-2016} in the context of a wave equation with space varying density $\rho$.

%
%\red {Maybe Propositions 3.1 and 3.2 go to previous Section?}
%

\begin{proposition}
	\label{Prop-Approx-Rho}
	Let $\Omega_0$ be a smooth domain containing $\overline\Omega$ and let $\rho$ satisfy the assumptions 	%
	\begin{itemize}
		\item The density $\rho$ is strictly positive and bounded in $\Omega_0$: there exist $\rho_1>0$ and $\rho_2 >0$ such that 
		\begin{equation}
			\label{PositivityRho}
			\forall x \in  \Omega_0,\quad \rho_1 \leq \rho(x)\leq \rho_2.  		
		\end{equation}
		\item The density $\rho$ is continuous in $\overline{\Omega_0}$:
		\begin{equation}
			\label{ContinuityRho}
			\rho \in \mathscr{C}^0(\overline{\Omega_0}), 
		\end{equation}
	\end{itemize}
	and
	\begin{equation}
		\label{ObsCondRho}	
		\exists \alpha \in (0, 2], 
		\hbox{ such that } 
		\quad 
		x\cdot \nabla\rho(x) + (2 -\alpha) \rho(x) \geq 0
		\text{ in the sense of } \mathscr{D}'(\Omega_0), 
	\end{equation} 
	and further assume 
	\begin{equation}
		\label{Cond-on-0}
		\left\{
			\begin{array}{l}
				0 \notin \overline{\Omega_1}, 
				\\
				\hbox{or}
				\\
				\hbox{$\rho$ is $\mathscr{C}^1$ in a neighborhood of $0$.}
			\end{array}
		\right.  
	\end{equation}
	\\
	Let $\eta$ be a real valued non negative smooth function on  $\R^{d}$ supported in the unit ball $B(0,1)$ and such that 
	\begin{equation}
		\label{NormalizationEta}
		\int_{\R^d} \eta(x)\, \ud x=1 \hbox{ and }\, \, \forall i \in \{1, \cdots, d\}, \, \int_{\R^d} x_i \eta(x) \, \ud x = 0.
	\end{equation}
	Then there exists $\varepsilon_0>0$ such that the functions $(\rho_\varepsilon)_{\varepsilon \in (0, \varepsilon_0)}$ defined on $\overline\Omega$ by 
	\begin{equation}	
		\label{Def-rho-eps}
		\left\{	
			\begin{array}{l}
				\ds \forall x \in \overline\Omega \setminus\{0\},
				\quad
				\rho_\varepsilon(x)
				= 
				\frac{1}{(\varepsilon |x|)^{d}} \int_{\Omega_0} \rho(y) \eta\left(\frac{x-y}{\varepsilon \vert x \vert}\right)\,\ud y,
				\smallskip \\
				\ds\hbox{If } 0 \in \overline\Omega, \quad \rho_\varepsilon(0)
				= 
				\rho(0),
			\end{array}
		\right.
	\end{equation}
	satisfies the following properties:
	\begin{itemize}
		\item[(i)] For each $\varepsilon \in (0, \varepsilon_0)$, $\rho_\varepsilon$ belongs to $\mathscr{C}^1(\overline\Omega)$;
		\item[(ii)] For each $\varepsilon \in (0, \varepsilon_0)$, $\rho_\varepsilon$ satisfies conditions \eqref{PositivityRho}.
		\item[(iii)] The sequence $\rho_\varepsilon$ strongly converges in $L^\infty(\Omega)$ to $\rho$ as $\varepsilon \to 0$:
		\begin{equation}
			\label{StrongConvergence-rho}
			\lim_{\varepsilon \to 0} \norm{ \rho_\varepsilon - \rho}_{L^\infty(\Omega)} = 0.
		\end{equation}
		\item[(iv)] For each $\varepsilon \in (0, \varepsilon_0)$, condition \eqref{ObsCondRho} is satisfied pointwise in $x \in \Omega$, i.e.  
		\begin{equation}
			\label{ObsCondRhoEps}
			\forall x \in \Omega, 
			\quad
			x\cdot \nabla\rho_\varepsilon(x) + (2 -\alpha) \rho_\varepsilon(x) \geq 0.
		\end{equation}
	\end{itemize}
\end{proposition}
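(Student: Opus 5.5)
The plan is to rewrite $\rho_\varepsilon$ as an average of $\rho$ over a family of maps that commute with dilations, so that the multiplier operator $x\cdot\nabla$ passes through the regularization. Since the integrand vanishes outside the ball $|x-y|\leq\varepsilon|x|$, we may change variables $y = x-\varepsilon|x|z$ for $x\neq 0$. This gives
$$\rho_\varepsilon(x)=\int_{\R^d}\rho(x-\varepsilon|x|z)\,\eta(z)\,\ud z .$$
Choose $\varepsilon_0$ so that $\varepsilon_0\sup_{\overline\Omega}|x| < \mathrm{dist}(\overline\Omega,\partial\Omega_0)$. Then for $\varepsilon<\varepsilon_0$ all the points $x-\varepsilon|x|z$ with $|z|\leq 1$ lie in a fixed compact subset of $\Omega_0$.

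\emph{Items (ii) and (iii).} Property (ii) is immediate, because $\rho_\varepsilon(x)$ is an average of values of $\rho$ with weight $\eta\geq0$ of mass one. For (iii), note that $|x-\varepsilon|x|z - x|\leq \varepsilon\sup_\Omega|x|$. The uniform continuity of $\rho$ on $\overline{\Omega_0}$ then gives $\|\rho_\varepsilon-\rho\|_{L^\infty(\Omega)}\to0$, and at $x=0$ the two functions agree by definition.

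\emph{Item (iv).} Write $\rho_\varepsilon(x)=\int\rho(y)K_x(y)\,\ud y$ with $K_x(y)=(\varepsilon|x|)^{-d}\eta\big((x-y)/(\varepsilon|x|)\big)$. For $x\neq0$, $K_x$ is a nonnegative function in $\mathscr{C}^\infty_c(\Omega_0)$ that depends smoothly on $x$. The kernel is jointly homogeneous: $K_{\lambda x}(\lambda y)=\lambda^{-d}K_x(y)$. Euler's identity therefore gives $x\cdot\nabla_xK_x+y\cdot\nabla_yK_x=-dK_x$, that is,
$$x\cdot\nabla_x K_x(y)=-\div_y\big(yK_x(y)\big).$$
Differentiating under the integral, we get
$$x\cdot\nabla\rho_\varepsilon(x)=-\int\rho\,\div_y(yK_x)\,\ud y=\langle y\cdot\nabla\rho,\,K_x\rangle_{\mathscr{D}',\mathscr{D}}.$$
Adding $(2-\alpha)\rho_\varepsilon(x)=\langle(2-\alpha)\rho,K_x\rangle$ and testing the distributional inequality \eqref{ObsCondRho} against the admissible test function $K_x\geq0$ yields \eqref{ObsCondRhoEps} for every $x\neq0$. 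At $x=0$ (if $0\in\overline\Omega$) the inequality reads $(2-\alpha)\rho(0)\geq0$, which is trivial. This kernel identity is the conceptual heart of the proof, and it requires no regularity of $\rho$.

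\emph{Item (i): regularity, the main obstacle.} Away from $0$, $\rho_\varepsilon$ is $\mathscr{C}^\infty$, since the kernel is smooth in $x$ there. The delicate point is $x=0$ when $0\in\overline\Omega$. In that case \eqref{Cond-on-0} ensures that $\rho$ is $\mathscr{C}^1$ near $0$, and I will use the vanishing first moments in \eqref{NormalizationEta}.
\begin{itemize}
\item \emph{Differentiability at $0$.} A Taylor expansion gives $\rho_\varepsilon(x)-\rho(0)=\int\nabla\rho(0)\cdot(x-\varepsilon|x|z)\eta(z)\,\ud z+o(|x|)=\nabla\rho(0)\cdot x+o(|x|)$, because $\int z\,\eta=0$. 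Hence $\nabla\rho_\varepsilon(0)=\nabla\rho(0)$.
\item \emph{Continuity of the gradient at $0$.} For $x\neq0$ near $0$ we have $\nabla\rho_\varepsilon(x)=\int\big(I-\varepsilon\,\tfrac{x}{|x|}\otimes z\big)\nabla\rho(x-\varepsilon|x|z)\,\eta(z)\,\ud z$. As $x\to0$ the first part tends to $\nabla\rho(0)$. The second part tends to $\varepsilon\,\tfrac{x}{|x|}\big(\nabla\rho(0)\cdot\int z\eta\big)=0$, since its integrand is bounded and $\nabla\rho$ is continuous; the bounded unit vector $x/|x|$ causes no trouble.
\end{itemize}
Hence $\rho_\varepsilon\in\mathscr{C}^1(\overline\Omega)$, which completes the argument.
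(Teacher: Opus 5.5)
Your proposal is correct and follows essentially the same route as the paper. The paper defers this statement to Dehman--Ervedoza but carries out the identical argument for the matrix analogue, Proposition~\ref{Prop-Approx-a}: the change of variables \eqref{Other-Formula-For-A-eps}, averaging for (ii), uniform continuity for (iii), and the identity $x\cdot\nabla_x K_x=-\div_y(yK_x)$ tested against the distributional inequality for (iv). Your Euler-homogeneity derivation of that identity is just a tidier version of the paper's direct differentiation, and your use of the vanishing first moments in (i) correctly gives genuine $\mathscr{C}^1$ regularity at $0$, where the matrix version only proves a Lipschitz bound.
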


We claim that the same arguments as the ones developed in the proof of \cite[Proposition 2.1]{Dehman-Erv-2016} can be adapted as follows: 
\begin{proposition}
	\label{Prop-Approx-a}
	Let $\Omega_0$ be a smooth open domain containing $\overline\Omega$ and let $A \in L^\infty (\Omega_0; \R^{d\times d})$ satisfying Assumption \ref{Assumption1}.
      %
    %%%  Hypothesis  de a copiadas arriba, y por lo tanto comentadas aqui:
	% \begin{itemize}
	% 		\item The velocity $a$ is strictly positive and bounded in $\overline\Omega$: there exist $\alpha_1>0$ and $\alpha_2 >0$ such that 
		% 		\begin{equation}\label{Positivity-a}
%			\forall x \in  \Omega,\quad 0<\alpha_1 \leq a(x)\leq \alpha_2.  		
%		\end{equation}
		%
%		\item The velocity $a$ is bounded in $\Omega_1$:
		%
%		\begin{equation}
%			\label{Bounded-Omega-1-a}
%			a \in L^\infty(\Omega_1).
%		\end{equation}
		%
%	\end{itemize}
	%
%	and there exists $ \red{  \beta > -2 } $ such that 
	%
%	\begin{equation}		\label{ObsCond-a}	
		%\exists \beta \in \R,  CORRECTED:		\hbox{ such that } \,  
		% x\cdot \nabla a  + \beta a  \leq 0
		%\text{ in the sense of } \mathscr{D}'(\Omega_1), 
	 %  \end{equation} 
	% in the sense of $  \mathscr{D}'(\Omega_1)$. 
	%Further assume 
	%
	%\begin{equation}
	%	\label{Cond-on-0-a}
	%	\left\{
	%		\begin{array}{l}
	%			0 \notin \overline{\Omega_1}, 
	%			\\
%				\hbox{or}
%				\\
%				\hbox{$a$ is $C^1$ in a neighborhood of $0$.}
%			\end{array}
%		\right.  
%	\end{equation}
	% \\
	%
	Let $\eta$ be a real valued non negative smooth function on  $\R^{d}$ supported in the unit ball $B(0,1)$ and satisfying \eqref{NormalizationEta}.

	Then there exists $\varepsilon_0>0$ such that the functions $(A_\varepsilon)_{\varepsilon \in (0,\varepsilon_0)}$ defined on $\overline\Omega$ by 
	\begin{equation}	
		\label{Def-a-eps}
		\left\{	
			\begin{array}{l}
				\ds \forall x \in \overline\Omega \setminus\{0\},
				\quad
				A_\varepsilon(x)
				= 
				\frac{1}{(\varepsilon |x|)^{d}} \int_{\Omega_0} A(y) \eta\left(\frac{x-y}{\varepsilon \vert x \vert}\right)\, \ud y,
				\smallskip \\
				\ds\hbox{If } 0 \in \overline\Omega, \quad A_\varepsilon(0)
				= 
				A(0),
			\end{array}
		\right.
	\end{equation}
	satisfies the following properties:
	\begin{itemize}
		\item[(i)] For each $\varepsilon \in (0, \varepsilon_0)$, $A_\varepsilon$ belongs to $W^{1,\infty}(\Omega; \R^{d \times d})$ and to $\mathscr{C}^1(\overline\Omega\setminus\{0\}; \R^{d\times d})$;

		\item[(ii)] For each $\varepsilon \in (0, \varepsilon_0)$, $A_\varepsilon$ satisfies:
		\begin{equation}
			\label{Positivity-A-eps}
			\forall x \in \overline\Omega,\, \forall \varepsilon \in (0, \varepsilon_0), \quad \frac{1}{\alpha} I_{d \times d} \leq A_\varepsilon(x) \leq \alpha I_{d \times d}, 
		\end{equation}
		\item[(iii)] The sequence $A_\varepsilon$ strongly converges in $L^1(\Omega; \R^{d\times d})$ to $A$ as $\varepsilon \to 0$:
		\begin{equation}
			\label{StrongConvergence}
			\lim_{\varepsilon \to 0} \norm{A_\varepsilon - A}_{L^1(\Omega; \R^{d\times d})} = 0.
		\end{equation}
		\item[(iv)] For each $\varepsilon \in (0, \varepsilon_0)$, condition \eqref{ObsCond-a} is satisfied pointwise in $x \in \Omega$, i.e.  
		\begin{equation}
			\label{ObsCond-a-eps}
			\forall \xi \in \R^d, \, \forall x \in \overline{\Omega}, 
			\quad
			\left(x\cdot \nabla A_\varepsilon(x) - \beta A_\varepsilon(x)) \xi\right) \cdot \xi \leq 0.
		\end{equation}
		\item[(v)] If $A \in W^{1,\infty}(\mathcal{V}; \R^{d \times d})$ for an open set $\mathcal{V}$ included in $\Omega_0$, for any open set $\mathcal{V}_1$ such that $\overline{\mathcal{V}_1} \subset \mathcal{V}$, there exists $\varepsilon_1 \in (0, \varepsilon_0)$ such that $\sup_{\varepsilon \in (0, \varepsilon_1)} \|\nabla A_\varepsilon\|_{L^\infty(\mathcal{V}_1)} < \infty$.
	\end{itemize}
\end{proposition}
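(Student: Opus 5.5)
The plan is to follow the proof of \cite[Proposition 2.1]{Dehman-Erv-2016} and work with the change of variables $y = x - \varepsilon |x| z$. For $x \neq 0$ this gives
\begin{equation*}
	A_\varepsilon(x) = \int_{\R^d} A(x - \varepsilon |x| z)\, \eta(z)\, \ud z .
\end{equation*}
Choose $\varepsilon_0 < 1$ small enough that $x - \varepsilon|x| z \in \Omega_0$ for all $x \in \overline\Omega$, $|z|\leq 1$ and $\varepsilon < \varepsilon_0$. This is possible because $\overline\Omega$ is compact in $\Omega_0$ and $\varepsilon|x| \leq \varepsilon \sup_{\Omega}|x|$. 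With this choice, (ii) is immediate: $A_\varepsilon(x)$ is an average of matrices lying in the closed convex set $\{M : \alpha^{-1} I \leq M \leq \alpha I\}$, with probability weight $\eta$. At $x=0$ (when $0\in\overline\Omega$), $A$ is Lipschitz near $0$, so $A(0)$ makes sense and satisfies the same bounds.

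For (i), differentiate the first formula in $x$. This only involves derivatives of $\eta$ and of $|x|$, so $A_\varepsilon \in \mathscr{C}^1(\overline\Omega\setminus\{0\})$, with $|\nabla A_\varepsilon(x)| \leq C\|A\|_{L^\infty}/(\varepsilon|x|)$. Near $0$ we use the second assumption in \eqref{Cond-on-0-a}. On a ball $B(0,r)$ where $A$ is Lipschitz, the formula $A_\varepsilon(x) = \int A(x-\varepsilon|x|z)\eta(z)\,\ud z$ shows that $A_\varepsilon$ is Lipschitz with constant $\leq (1+\varepsilon)\mathrm{Lip}(A)$, and that it is continuous at $0$ with value $A(0)$. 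Hence $A_\varepsilon \in W^{1,\infty}(\Omega)$. The same formula also gives (v): on $\mathcal{V}_1$, with $\varepsilon_1$ small so that the averaging points stay in $\mathcal{V}$, the Lipschitz constant of $A_\varepsilon$ is bounded by $(1+\varepsilon)\|\nabla A\|_{L^\infty(\mathcal{V})}$, provided $|x|$ is bounded on $\mathcal{V}_1$. For (iii), note that $A_\varepsilon \to A$ at every Lebesgue point $x\neq 0$ of $A$, since it is an average over a ball of radius $\varepsilon|x|$ with a bounded kernel. Uniform boundedness and dominated convergence then give $L^1$ convergence. (Equally, a.e. convergence holds, which Theorem \ref{Thm-Regularization} also needs.)

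The main step is (iv). Fix $\xi$ and set $a(x) = A(x)\xi\cdot\xi$, $a_\varepsilon = A_\varepsilon \xi\cdot\xi$; the weak condition \eqref{Eq-Formulation-Weak-Multiplier} becomes $x\cdot\nabla a - \beta a \leq 0$ in $\mathscr{D}'(\Omega_0)$. The key structural fact is dilation covariance. For fixed $z$, the map $x\mapsto x - \varepsilon |x| z$ is positively $1$-homogeneous, so it commutes with the Euler operator $x\cdot\nabla$. Formally, $(x\cdot\nabla)[a(x-\varepsilon|x|z)] = ((y\cdot\nabla) a)(y)|_{y = x-\varepsilon|x|z}$, because $\frac{d}{ds} a(e^s x - \varepsilon e^s|x| z)|_{s=0}$ is exactly the Euler derivative of $a$ at $y$. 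This gives
\begin{equation*}
	x\cdot\nabla a_\varepsilon(x) - \beta a_\varepsilon(x) = \int \big( y\cdot\nabla a - \beta a\big)(x-\varepsilon|x|z)\,\eta(z)\,\ud z .
\end{equation*}
To make this rigorous for a distribution, rewrite the right-hand side as the pairing of $x\cdot\nabla a-\beta a$ with the nonnegative test function $\psi_x(y) = (\varepsilon|x|)^{-d}\eta((x-y)/(\varepsilon|x|))$. I would verify directly that $x\cdot\nabla_x[\psi_x(y)] = -\div_y(y\,\psi_x(y))$, using the homogeneity identity $\psi_{\lambda x}(\lambda y)=\lambda^{-d}\psi_x(y)$ differentiated at $\lambda=1$. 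This yields
\begin{equation*}
	x\cdot\nabla a_\varepsilon(x)-\beta a_\varepsilon(x) = -\int a(y)\big(\div_y(y\psi_x)+\beta\psi_x\big)\,\ud y \leq 0
\end{equation*}
by \eqref{Eq-Formulation-Weak-Multiplier}, since $\psi_x\in\mathscr{C}^\infty_c(\Omega_0;\R_+)$. At $x=0$, $A$ is Lipschitz near $0$, so $x\cdot\nabla A_\varepsilon(x)$ vanishes there and $-\beta A(0)\xi\cdot\xi\leq 0$ holds trivially.

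I expect this homogeneity computation to be the main obstacle. The other items are routine. Note also that the vanishing first moment in \eqref{NormalizationEta} does not seem needed for these properties; it was only used for finer convergence rates in \cite{Dehman-Erv-2016}.
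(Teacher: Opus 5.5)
Your proof is correct. For items (i), (ii), (iv) and (v) it is essentially the paper's argument. The paper also rewrites $A_\varepsilon(x)=\int A(x-\varepsilon|x|y)\eta(y)\,\ud y$ and gets the Lipschitz estimate near $0$ from the Lipschitz continuity of $A$ there. In (iv) it arrives at exactly your identity $x\cdot\nabla A_\varepsilon(x)=-\int A(y)\,\div_y\big(y\,\psi_x(y)\big)\,\ud y$ before testing \eqref{Eq-Formulation-Weak-Multiplier} with $\varphi=\psi_x$. The only difference there is that the paper reaches the identity by direct differentiation, whereas you use the homogeneity $\psi_{\lambda x}(\lambda y)=\lambda^{-d}\psi_x(y)$; your route shows more transparently why this particular regularization commutes with the Euler operator.

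The genuine difference is in (iii). The paper approximates $A$ in $L^1(\Omega_0)$ by a compactly supported continuous $\tilde A_\rho$ and uses its uniform continuity. That argument tacitly needs the change of variables $x\mapsto x-\varepsilon|x|y$ (injective, with Jacobian at least $1-\varepsilon$) to bound $\int_\Omega|(A-\tilde A_\rho)(x-\varepsilon|x|y)|\,\ud x$ by a multiple of $\|A-\tilde A_\rho\|_{L^1(\Omega_0)}$. Your Lebesgue-point argument on the balls $B(x,\varepsilon|x|)$, followed by dominated convergence, avoids this step.

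Your route also gives almost-everywhere convergence of the whole family $A_\varepsilon\to A$. This is the second half of hypothesis \eqref{Convergence-a-n-a}, which is required when Theorem \ref{Thm-Regularization} is applied in the proof of Theorem \ref{MainThmOBS}. From the paper's $L^1$ statement alone, one has to extract an a.e.\ convergent subsequence at that point.

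Your remark that the vanishing first moment in \eqref{NormalizationEta} is never used is consistent with the paper's proof.
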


\begin{remark}
	\label{Rk-Approx-For-Plates}
	Similarly, if $a \in L^\infty(\Omega)$ satisfies the assumption of Theorem \ref{MainThmOBS-Plates}, then the functions $(a_\varepsilon)_{\varepsilon \in (0,\varepsilon_0)}$ (defined as in \eqref{Def-a-eps} with $a$ instead of $A$) satisfy all the items of Proposition \ref{Prop-Approx-a} with $a_\varepsilon$ and $a$ instead of $A_\varepsilon$ and $A$, and item $(iv)$ with $\beta$ as in \eqref{Multiplier-Condition-Plate}. This can be deduced immediately by considering $A = a I_{d \times d}$ in Proposition \ref{Prop-Approx-a}, and remarking that the proof of item $(iv)$ gives that $x \cdot \nabla a_\varepsilon - \beta a_\varepsilon \leq 0$ in $\mathscr{D}'(\Omega_0)$ implies $x \cdot \nabla a - \beta a \leq 0$ in $\overline\Omega$, no matter what $\beta \in \R$ is.
\end{remark}

The construction of the regularized approximations \( A_\varepsilon \), as defined in \eqref{Def-a-eps}, is directly adapted from the one in Proposition \ref{Prop-Approx-Rho}. We still give its proof for convenience. 

\begin{proof}
	Before starting the proof, let us note that, for $\varepsilon>0$ sufficiently small to get $\varepsilon \sup_{x \in \Omega} |x| < d(\Omega, \partial \Omega_0)$,  the formula  \eqref{Def-a-eps} can be rewritten as
	\begin{equation}
		\label{Other-Formula-For-A-eps}
	 	\forall x \in \overline{\Omega} \setminus \{0\}, \quad A_\varepsilon(x) = \int_{\R^d} A(x- \varepsilon |x| y) \eta (y) \, \ud y.
	\end{equation}
	In the proof below, we will always assume that $\varepsilon < d(\Omega, \partial \Omega_0)/\sup_{x \in \Omega} |x| $, so that this formula can be used. 
	\medskip\\	
	{\it Proof of item (i).} The proof of the $\mathscr{C}^1$ regularity of $A_\varepsilon$ in $\overline{\Omega} \setminus\{0\}$ is straightforward, since it is an immediate consequence of the smoothness of $\eta$ by using formula \eqref{Def-a-eps}. The fact that $\nabla A_\varepsilon$ is bounded in $\Omega$ is also clear from formula \eqref{Def-a-eps} when $0 \notin \overline\Omega$.
	\\
	When $0$ belongs to $\overline\Omega$, the situation is slightly more intricate. We first remark that in fact formula \eqref{Def-a-eps} actually defines $A_\varepsilon$ in an open neighborhood of $\overline\Omega$, so that $0$ can be considered as an interior point of $\Omega$ without loss of generality by slightly enlarging the set $\Omega$ if needed. 
	
	In order to check that $A_\varepsilon$ is differentiable at $0$ when $0$ belongs to $\Omega$, we use the fact in such case, by Assumption \ref{Assumption1}, $A$ is Lipschitz in a neighborhood of $0$, say $B(0,r_0)$ for some $r_0 >0$. In particular, $A$ is continuous in $B(0,r_0)$. We can then easily obtain the Lipschitz continuity of $A_\varepsilon$ in $B(0,r_0/2)$, for 
    $\varepsilon < 1$,
    % $\varepsilon r_0/2 \leq r_0$  <---- previous hypothesis
    by writing, for $x_1$ and $x_2$ in $B(0,r_0/2)$,
	\begin{align*}
		\| A_\varepsilon(x_1) - A_\varepsilon(x_2) \|
		& \leq 
		\left|
			 \int_{\R^d} (A(x_1- \varepsilon |x_1| y) - A(x_2 - \varepsilon |x_2| y) \eta (y) \, \ud y
		\right| 
		\\
		& 
		\leq 
		\int_{\R^d} | A(x_1- \varepsilon |x_1| y) - A(x_2 - \varepsilon |x_2| y) |\eta (y) \, \ud y
		\\
		& 
		\leq 
		\| \nabla A\|_{L^\infty(B(0,r_0))}
		\int_{\R^d} |x_1 - x_2|  (1+  \varepsilon |y|) \eta (y) \, \ud y
		\\ 
		& \leq 
		C_\eta (1+ \varepsilon) \| \nabla A\|_{L^\infty(B(0,r_0))}
		|x_1 - x_2| , 
	\end{align*}
	where we used that $\eta$ is compactly supported in the unit ball $B(0,1)$. It follows that $A_\varepsilon$ is Lipschitz in $B(0,r_0/2)$. Since it is also clear from formula \eqref{Def-a-eps} that  $\nabla A_\varepsilon$ is bounded in $\overline\Omega \setminus B(0,r_0/2)$, we get that $A_\varepsilon$ is Lipschitz in $\Omega$ as announced.
	\medskip\\
	{\it Proof of item (ii).} To prove item (ii), we simply remark that from \eqref{NormalizationEta} for all $x \in \R^d$,
	$$
		\int_{\R^d} \eta\left(\frac{x-y}{\varepsilon \vert x \vert}\right)\, \ud y 
		= 
		(\varepsilon |x|)^d.
	$$
	One then easily gets that $A_\varepsilon$ satisfies \eqref{Positivity-A-eps} on $\overline\Omega$ using \eqref{Positivity-A}, provided $\varepsilon \in (0, \varepsilon_0)$.
	\medskip\\	
	{\it Proof of item (iii).} Note that $A \in L^\infty(\Omega_0; \R^{d\times d})$. Thus, for all $\rho >0$, there exists $\tilde A_\rho \in \mathscr{C}_c(\Omega_0;  \R^{d \times d})$ such that $\| A - \tilde A_\rho \|_{L^1(\Omega_0)} \leq \rho $.
	
	Now, since  $\tilde A_\rho \in \mathscr{C}_c(\Omega_0;  \R^{d \times d})$, it is uniformly continuous on $\overline{\Omega_0}$: For every $\delta >0$, there exists $\gamma_\rho(\delta) >0$ such that 
	\begin{equation}
		\label{UniformCont}
	  	\forall (x,y) \in \overline{\Omega_0}^2, 
		\hbox{ with }
		| x-y| \leq \gamma_\rho(\delta), \quad | \tilde A_\rho(x)-\tilde A_\rho(y)| \leq \delta.
	\end{equation}
	We then remark that 
	\begin{align*}
		\forall x \in \overline\Omega, 
		\quad
		& A_\varepsilon(x)-A(x)
		 = 
		\int_{\R^d} (A(x - \varepsilon |x| y) - A(x)) \eta (y) \ud y
%		\\
%		& = 
%		\int_{\R^d} (A(x - \varepsilon |x| y) - \tilde A(x - \varepsilon |x| y)) \eta (y) \ud y
%		+ 
%		\int_{\R^d} (\tilde A(x - \varepsilon |x| y) - \tilde A(x)) \eta (y) \ud y		
%		+ 
%		\int_{\R^d} (\tilde A(x) - A(x)) \eta (y) \ud y
		\\ 
		&= 
		\int_{\R^d} (A(x - \varepsilon |x| y) - \tilde A_\rho(x - \varepsilon |x| y)) \eta (y) \ud y
		+ 
		\int_{\R^d} (\tilde A_\rho(x - \varepsilon |x| y) - \tilde A_\rho(x)) \eta (y) \ud y		
		+
		(\tilde A_\rho(x) - A(x)).
	\end{align*}
	Therefore, using \eqref{NormalizationEta}, for all $\delta >0$, taking $\varepsilon$ small enough to guarantee $\varepsilon \sup_{x \in \Omega} |x| \leq \gamma_\rho (\delta)$,  
	$$	
		\int_\Omega | A_\varepsilon(x)-A(x) | \ud x
		\leq 
		2 \| A - \tilde A_\rho \|_{L^1(\Omega_0)} + \int_{\Omega_0} \sup_{y \in B(0,1)} | \tilde A_\rho(x - \varepsilon |x| y) - \tilde A_\rho(x))| \, \ud x
		\leq 
		2 \rho + \delta | \Omega_0| .
	$$
	Accordingly, for any $\delta >0$, 
	$$
		\limsup_{\varepsilon \to 0} \| A_\varepsilon - A\|_{L^1(\Omega)} \leq 2 \rho + \delta |\Omega_0|. 
	$$
	Consequently, 
	$$
		\limsup_{\varepsilon \to 0} \| A_\varepsilon - A\|_{L^1(\Omega)} \leq 2 \rho. 
	$$
	As $\rho$ is also arbitrary, we have obtained that $A_\varepsilon$ converges to $A$ in $L^1(\Omega; \R^{d\times d})$.
%	
%	%
%	Therefore, if we set $R_1 = \max\{|x|, \hbox{ for } x \in \overline{\Omega_1} \}$, for all $\beta >0$, as soon as $\varepsilon R_1 \leq \gamma(\beta)$ given above in \eqref{UniformCont}, we obtain $\vert\rho_\varepsilon(x)-\rho(x)\vert \leq \beta$  for every $x \in \overline{\Omega}$ providing $\varepsilon < \min 
%	\{\varepsilon_0,\gamma(\beta)/R_1\}$. This concludes the proof of item (iii).
	%
	\medskip\\
	{\it Proof of item (iv).}  We first notice that for $\varepsilon>0$ small enough, formula \eqref{Def-a-eps} gives that for all $x \in \overline\Omega \setminus\{0\}$, 
	$$
	A_\varepsilon(x)
				= 
				\frac{1}{(\varepsilon |x|)^{d}} \int_{\R^d} A(y) \eta\left(\frac{x-y}{\varepsilon \vert x \vert}\right)\, \ud y.
	$$
	Differentiating this formula, we obtain for all $x \in \overline{\Omega}\setminus\{0\}$,
	\begin{align*}
		x\cdot \nabla A_\varepsilon(x) 
		& = 
		\sum_{i = 1}^d x_i\partial_{x_i} A_\varepsilon(x)
		\\
		& = \frac{1}{(\varepsilon \vert x \vert)^{d}} \int_{\R^d} A(y)\sum_{i,j} \partial_{x_j}\eta \left(\frac{x-y}{\varepsilon \vert x \vert}\right)\bigg[\frac{1}{\varepsilon \vert x \vert}\bigg( x_i \delta_{ij}-(x_j-y_j)\frac{x^2_i}{\vert x\vert^2}\bigg)\bigg] \ud y - d A_\varepsilon(x)
		\\
		& = 
		\frac{1}{(\varepsilon \vert x \vert)^{d+1}} \int_{\R^d} A(y) y \cdot \nabla_{x} \eta \left(\frac{x-y}{\varepsilon \vert x \vert}\right) \ud y - d A_\varepsilon(x).
	 \end{align*}
	We then remark that 
	$$
		\nabla_{x} \eta \left(\frac{x-y}{\varepsilon \vert x \vert}\right) 
		= 
		- \varepsilon |x| \nabla_y \left( \eta \left(\frac{x-y}{\varepsilon \vert x \vert}\right)\right).
	$$	
	We thus get, for all $x \in \overline\Omega \setminus\{0\}$, 
	\begin{align*}
		x \cdot \nabla A_\varepsilon(x) 
		& =
		- \frac{1}{(\varepsilon \vert x \vert)^{d}} \int_{\R^d} A(y) y \cdot \nabla_{y}\left( \eta \left(\frac{x-y}{\varepsilon \vert x \vert}\right) \right) \ud y - d A_\varepsilon(x)
		\\
		& = 
		-  \frac{1}{(\varepsilon \vert x \vert)^{d}} \int_{\R^d} A(y) \div_y\left (y\, \eta \left(\frac{x-y}{\varepsilon \vert x \vert}\right)\right) \ud y.
	\end{align*}
	Using \eqref{ObsCond-a}, whose interpretation is \eqref{Eq-Formulation-Weak-Multiplier}, we thus deduce that for $\varepsilon >0$ small enough, for all $\xi \in \R^d$, for all $x \in \overline\Omega\setminus\{0\}$, 
	$$
		(x \cdot \nabla A_\varepsilon (x) - \beta A_\varepsilon(x)) \xi \cdot \xi \leq 0. 
	$$
	 This condition can also be checked immediately close to $0$ if $0 \in \overline{\Omega}$ by using that in this case $A$ is $\mathscr{C}^1$ in a neighborhood of $0$, which implies that condition \eqref{ObsCond-a} is satisfied pointwise in a neighborhood of $0$ and $\beta A(0) \xi \cdot \xi \geq 0$. 
	\medskip\\	
	{\it Proof of item (v).} Take $\varepsilon_1 \in (0, \varepsilon_0)$ such that 
    %$\varepsilon_1 \sup_{x \in \mathcal{V}_0} |x| \leq d(\partial\mathcal V, \partial \mathcal{V}_0)$. 
    $\varepsilon_1 \sup_{x \in \mathcal{V}_1} |x| \leq d(\mathcal V_1, \partial \mathcal{V})$. 
    Then we can use \eqref{Other-Formula-For-A-eps} to deduce that for all 
    %$x \in \mathcal{V}_0 \setminus \{0\}$
    $x \in \mathcal{V}_1 \setminus \{0\}$, 
    $\varepsilon < \varepsilon_1$ and  $i \in \{1, \cdots, d\}$, we have
	$$
		\partial_i A_\varepsilon(x) =  \int_{ B(0,1)} \left(\partial_i A(x- \varepsilon |x| y) - \varepsilon \frac{x_i}{|x|} \sum_{j = 1}^d \left(y_j \partial_j A(x - \varepsilon |x| y)\right) \right)\eta (y) \, \ud y.
	$$
	Item $(v)$ follows immediately.
\end{proof}

%Positivity \textit{(i)} is preserved by the non-negative property of \( \eta \), the \( C^1 \) regularity $(ii)$ follows from the smoothing properties of the kernel, and the multiplier type condition $(iv)$ is verified pointwise by substituting the parameter \( (2-\alpha) \) with \( \beta \) in \eqref{ObsCondRho}. Regarding the convergence $(iii)$, the boundedness of the domain \( \Omega \) is used. In this setting, convergence in \( L^\infty \) directly implies convergence in \( L^1 \), which guarantees that \( a_\varepsilon \to a \) strongly in \( L^1(\Omega) \).

\subsection{The multiplier method for the wave equation}
\label{Subsec-Multiplier-wave}
The goal of this section is to prove Theorem \ref{MainThmOBS} and Theorem \ref{MainThmOBS-distributed}.

As a first step, we prove uniform observability estimates for the wave equation using multiplier methods for a class of regular main coefficients  $A$ satisfying condition \eqref{ObsCond-a}.
 We then prove Theorem \ref{MainThmOBS} by passing to the limit in the observability estimate.

%Our strategy first consists in proving uniform observability estimates for the wave equation using multiplier methods for a class of regular main coefficients $A$ satisfying condition \eqref{ObsCond-a}. We then show Theorem \ref{MainThmOBS} by passing to the limit in the observability estimate.

%Multiply by $ x \cdot \nabla y + \lambda y$ for a suitable choice of $\lambda >0$ (depending on $\beta$). 

\subsubsection{The case of smooth coefficients}
%%  Inicio P1

We first prove an observability result for the wave equation \eqref{Wave-eq-intro} for smooth coefficients $A \in \mathscr{C}^1$, satisfying the multiplier condition \eqref{ObsCond-a}. Such observability estimate is classical and can be found for instance in \cite{Komornik-1989}. The main point here is that the observability estimates obtained in this way are uniform over  a large class of coefficients $A$, in the sense that they depend only on the ellipticity constant and the $L^\infty$-bound of $A$, on $\beta$ in \eqref{ObsCond-a} and  the geometry. This  is why we present the detailed proof afterwards.

\begin{proposition} \label{ObsC1}
	
	Let $\Omega$ be a non-empty open subset of $\R^d$ of class $\mathscr{C}^2$.

    Let $A \in \mathscr{C}^1(\overline{\Omega} \setminus \{0\}; \R^{d\times d}) \cap W^{1,\infty}(\Omega; \R^{d \times d})$ satisfying \eqref{Ass-Symmetric-A} with constant $\alpha$ and assume that there exists $\beta_1 \in [0, 2)$ such that 
	\begin{equation}
			\label{ObsCond-a-smooth}	
			\forall x \in \overline\Omega,\ \forall \xi \in \R^d, \quad
			((x\cdot \nabla A - \beta_1 A ) \xi) \cdot \xi    \leq 0.
	\end{equation} 
	Let $\Gamma_0$ be defined by \eqref{Multiplier-Set-Gamma}.   

	Then there exist $T>0$ and $ C > 0$, depending only on $\alpha $, $\beta_1$, $T$ and the geometry (i.e. of the domain $\Omega$ and of $\Gamma_0$), such that
    any solution $y$ of equation \eqref{Wave-eq-intro} within initial data in $H^1_0(\Omega) \times L^2(\Omega)$ satisfies \eqref{Obs-Boundary}. To be more precise, for any $T$ satisfying
	\begin{equation}
		\label{Cond-T}
		T > \frac{4 \sqrt{\alpha} \sup_{x \in \Omega} |x|}{2 -  \beta_1}, 
	\end{equation}
	the estimate \eqref{Obs-Boundary} holds with constant $C$ given by 
	\begin{equation}
		\label{Constant-Boundary}
		C^2 = \frac{\alpha}{T ( 1 - \beta_1/2) -2 \sqrt{\alpha} \sup_{x \in \Omega} |x|}.
	\end{equation}
    
    Similarly, if $\omega$ is a non-empty open subset of $\Omega$ such that $\omega$ is a neighborhood of 
    $ \Gamma_0  = \{ x \in \partial \Omega, \, x \cdot \nu >0 \}$, for any $T>0$ satisfying \eqref{Cond-T}, there exists a constant $ C_\omega > 0$, depending only on $\alpha $, $\beta_1$, $T$ and the geometry (i.e. of the domains $\omega$ and $ \Omega$) such that any solution $y$ of equation \eqref{Wave-eq-intro} with initial data $(y_0,y_1) \in H_0^1(\Omega) \times L^2(\Omega)$ satisfies 
	\begin{equation}\label{Obs-Distributed-smooth}
		\int_\Omega \left( A \nabla y_0 \cdot \nabla y_0+ |y_1|^2  \right) \ud x 
	\leq 
		 C^2_\omega \int_0^T\int_{\omega \cap \Omega}\left( \left| \partial_t y\right|^2 +  \left| y\right|^2 \right)
	 	  \, \ud x\, \ud t.  
	\end{equation}
\end{proposition}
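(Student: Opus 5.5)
We use the classical multiplier $M y = x\cdot\nabla y + \lambda y$ with $\lambda = (d-1+\beta_1)/2$ (or a nearby value to be tuned). By density it suffices to treat data in $H^2\cap H^1_0(\Omega)\times H^1_0(\Omega)$. This is the domain of $-\div(A\nabla\cdot)$ because $A\in W^{1,\infty}$, so all integrations by parts below are justified; the general case then follows by continuity, using the admissibility identity \eqref{Multiplier-Identity-m-a-n}.

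\textbf{Step 1 (multiplier identity).} We multiply the equation by $x\cdot\nabla y$ and integrate over $(0,T)\times\Omega$. We reuse the computation in the proof of Corollary \ref{Cor-Conv-Normal-Trace} with $m=x$, so that $\div m=d$ and $\partial_j m_k=\delta_{jk}$. This gives
\begin{multline*}
\frac12\int_0^T\!\!\int_{\partial\Omega}|\partial_\nu y|^2(A\nu\cdot\nu)(x\cdot\nu)
=\int_\Omega \partial_t y\, x\cdot\nabla y\Big|_0^T+\frac d2\int_0^T\!\!\int_\Omega|\partial_t y|^2
\\
+\frac{2-d}{2}\int_0^T\!\!\int_\Omega A\nabla y\cdot\nabla y-\frac12\int_0^T\!\!\int_\Omega (x\cdot\nabla A)\nabla y\cdot\nabla y .
\end{multline*}
We then bound the last term using \eqref{ObsCond-a-smooth}: $-\frac12 (x\cdot\nabla A)\nabla y\cdot\nabla y\ge -\frac{\beta_1}{2}A\nabla y\cdot\nabla y$. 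Next we add $\lambda$ times the equipartition identity \eqref{Equipartition-Energy-y}, i.e. the identity obtained with the multiplier $y$. Choosing $\lambda=(d-1+\beta_1)/2$ makes the coefficients of $\int|\partial_t y|^2$ and $\int A\nabla y\cdot\nabla y$ equal; both become $\frac{2-\beta_1}{4}\cdot 2=1-\beta_1/2$ times the kinetic and potential parts respectively. Using conservation of energy (Theorem \ref{Thm-WP}), this yields
$$
\frac12\int_0^T\!\!\int_{\partial\Omega}|\partial_\nu y|^2(A\nu\cdot\nu)(x\cdot\nu)\ \ge\ \frac{T}{2}\Big(1-\frac{\beta_1}{2}\Big)E(0)+\Big[\int_\Omega \partial_t y\,(x\cdot\nabla y+\lambda y)\Big]_0^T ,
$$
where $E$ is the energy of Theorem \ref{Thm-WP}.

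\textbf{Step 2 (boundary terms at $t=0,T$).} This is the delicate point, since it produces the constant $2\sqrt\alpha\sup|x|$ in \eqref{Constant-Boundary}. Let $R=\sup_\Omega|x|$. We bound
$$
\Big|\int_\Omega \partial_t y\,(x\cdot\nabla y+\lambda y)\Big|\le \frac{\sqrt\alpha R}{2}\Big(\|\partial_t y\|^2+\frac1{\alpha R^2}\|x\cdot\nabla y+\lambda y\|^2\Big).
$$
To control the second norm, we use the classical identity
$$
\|x\cdot\nabla y+\lambda y\|^2=\|x\cdot\nabla y\|^2+\lambda(\lambda-d)\|y\|^2 .
$$
Provided $\lambda\le d$, which holds since $\beta_1<2$, this is at most $R^2\|\nabla y\|^2\le \alpha R^2\int A\nabla y\cdot\nabla y$. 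Each time-boundary term is therefore at most $\frac{\sqrt\alpha R}{2}E$, and the two together at most $\sqrt\alpha R\,E(0)$.

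\textbf{Step 3 (conclusion).} On $\partial\Omega$ we use $y=0$, so $\nabla y=\partial_\nu y\,\nu$. We also have $(A\nu\cdot\nu)\le\alpha$, and we keep only the part of $\partial\Omega$ where $x\cdot\nu>0$, namely $\Gamma_0$. Combining with Steps 1 and 2 gives
$$
\big(T(1-\beta_1/2)-2\sqrt\alpha R\big)E(0)\le \alpha\int_0^T\!\!\int_{\Gamma_0}|\partial_\nu y|^2 (x\cdot\nu),
$$
which is \eqref{Obs-Boundary} with the constant \eqref{Constant-Boundary}. The exact factors of $2$ in this bookkeeping should be rechecked against the normalization of $E$.

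\textbf{Distributed case.} Here we replace the boundary observation by the standard cutoff argument. Take $\theta\in\mathscr C^\infty$ equal to $1$ near $\Gamma_0$, supported in $\omega$, and use the multiplier $\theta\,\nu_{ext}\cdot\nabla y$ as in the proof of Corollary \ref{Cor-Conv-Normal-Trace}. This bounds $\int_0^T\int_{\Gamma_0}|\partial_\nu y|^2$ by local energy terms $\int_0^T\int_{\omega'}(|\partial_t y|^2+|\nabla y|^2)$, up to small shifts in time. Those terms are in turn bounded by $\int_{\omega\cap\Omega}(|\partial_t y|^2+|y|^2)$ via a Caccioppoli-type estimate: multiply by $\chi^2 y$ with a cutoff $\chi$ in space-time. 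The time horizon is handled by working on $(\delta,T-\delta)$ with $\delta$ small, which is allowed since \eqref{Cond-T} is strict. All constants depend only on $\alpha$, $\beta_1$, $T$ and the cutoffs, hence on the geometry, giving \eqref{Obs-Distributed-smooth}.
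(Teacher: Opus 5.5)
Your boundary argument is the paper's proof: the same multiplier $x\cdot\nabla y+\lambda y$, the same use of \eqref{ObsCond-a-smooth}, and the same bound $\|x\cdot\nabla z+\lambda z\|_{L^2}\le R\|\nabla z\|_{L^2}$ for $0\le\lambda\le d$. Your factors of $2$ do produce exactly \eqref{Constant-Boundary}. You only need to fix the value of $\lambda$. Equating $\frac d2-\lambda$ with $\lambda-\frac{d-2}{2}-\frac{\beta_1}{2}$ gives $\lambda=\frac{d-1}{2}+\frac{\beta_1}{4}$, with common coefficient $\frac{2-\beta_1}{4}$. Your choice $\lambda=\frac{d-1+\beta_1}{2}$ leaves the kinetic term with coefficient $\frac{1-\beta_1}{2}$, which is nonpositive when $\beta_1\ge 1$.

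The distributed case has a genuine gap. The statement requires $C_\omega$ to depend only on $\alpha$, $\beta_1$, $T$ and the geometry, and this uniformity is the point of the proposition. It is applied to the regularized $A_n$ of Proposition \ref{Prop-Approx-a} in the proof of Theorem \ref{MainThmOBS-distributed}, where $A$ is not assumed Lipschitz near $\Gamma_0$, so $\nabla A_n$ is not uniformly bounded there. Your reduction to the boundary estimate uses the identity \eqref{Multiplier-Identity-m-a-n} with $m=\theta\tilde\nu$, which contains the term $\frac12\int_0^T\!\int_\Omega (m\cdot\nabla A)\nabla y\cdot\nabla y$. Since $m$ is not radial, \eqref{ObsCond-a-smooth} gives no control on it. You must bound it by $\|\nabla A\|_{L^\infty(\Supp\theta)}$, so your $C_\omega$ depends on that norm, contrary to your last sentence. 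Taking $m=\theta x$ instead does not help: \eqref{ObsCond-a-smooth} bounds $x\cdot\nabla A$ only from above, which is the wrong direction for bounding the trace from above.

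The missing idea is to avoid the trace altogether and localize the radial multiplier away from $\Gamma_0$. Multiply the equation by $\eta(x)(x\cdot\nabla y+\lambda y)$ with $0\le\eta\le 1$, $\eta=0$ near $\Gamma_0$ and $\Supp(1-\eta)\subset\omega$. Then:
\begin{itemize}
\item the boundary term $-\frac12\int_0^T\!\int_{\partial\Omega}\eta|\partial_\nu y|^2(A\nu\cdot\nu)(x\cdot\nu)$ is nonnegative and can be dropped;
\item $x\cdot\nabla A$ appears only multiplied by $\eta\ge 0$, so \eqref{ObsCond-a-smooth} applies;
\item the time-boundary terms are bounded as in your Step 2;
\item the commutator terms involve only $\nabla\eta$ and $A$, together with $|\partial_t y|^2+|\nabla y|^2+|y|^2$ on $\Supp(1-\eta)$.
\end{itemize}
Add the missing energy on $\Supp(1-\eta)$ to both sides and then run your Caccioppoli step with the time shrinking you describe. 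This gives \eqref{Obs-Distributed-smooth} with constants depending only on $\alpha$, $\beta_1$, $T$ and the cutoffs.
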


\begin{proof}
	Let $y$ be a solution of the wave equation \eqref{Wave-eq-intro} with initial data  $(y_0, y_1) \in H^2 \cap H^1_0(\Omega) \times H^1_0(\Omega)$. The trajectory $y$ then belongs to $\mathscr{C}^2([0,T]; H^2 \cap H^1_0(\Omega) ) \cap \mathscr{C}^1([0,T]; H^1(\Omega))$. Such regularity will allow us to justify all the computations afterwards.
	
	We multiply the wave equation \eqref{Wave-eq-intro} by $x \cdot \nabla y$. This yields
	\begin{align*}
		0 &= \int_0^T \int_\Omega (\partial_{tt} y - \div(A \nabla y)) x\cdot \nabla y \, \ud x\, \ud t
		\\
		& = 
		\int_\Omega \partial_t y \, x \cdot \nabla y \, \ud x\Big|_0^T 
		- 
		\int_0^T \int_\Omega x \cdot \nabla \left( \frac{1}{2} |\partial_t y|^2 \right) \ud x\, \ud t
		\\
		& \qquad - 
		\int_0^T \int_{\partial \Omega} (A \nabla y \cdot \nu) (x \cdot \nabla y) \, \ud\sigma \, \ud t
		+ 
		\int_0^T \int_\Omega A \nabla y \cdot \nabla ( x \cdot \nabla y) \, \ud x \ud t.
		\\ 
		& = 
		\int_\Omega \partial_t y \, x \cdot \nabla y \, \ud x\Big|_0^T 
		+ 
		\frac{d}{2}  
		\int_0^T \int_\Omega |\partial_t y|^2  \ud x\, \ud t
		\\
		& \qquad - 
		\int_0^T \int_{\partial \Omega} |\partial_\nu y|^2 (A \nu \cdot \nu ) (x \cdot \nu) \, \ud\sigma \, \ud t
		+ 
		\int_0^T \int_\Omega A \nabla y \cdot \nabla ( x \cdot \nabla y) \, \ud x \ud t, 
	\end{align*}
	where we used that, since  $y = 0$ on $(0,T) \times \partial \Omega$,
    we have  $\nabla y = \partial_\nu y\ \nu$ and thus $(A \nabla y \cdot \nu) ( x \cdot \nabla y) =| \partial_\nu y|^2 (A \nu \cdot \nu ) (x \cdot \nu)$ on the boundary. 
	
	Now, since $A$ is symmetric, 
	$$
		\int_\Omega A \nabla y \cdot \nabla ( x \cdot \nabla y) \, \ud x 
		= 
		\int_\Omega A \nabla ( x \cdot \nabla y)  \cdot \nabla y \, \ud x.
	$$
	Hence, 
	\begin{align*}
		2 \int_\Omega A \nabla y \cdot \nabla ( x \cdot \nabla y) \, \ud x 
		& 
		=
		\sum_{i,j,k = 1}^d \int_\Omega a_{i,j} \left( (\partial_i y) \partial_j( x_k \partial_k y ) + (\partial_i (x_k \partial_k y) ) (\partial_j y)\right) \, \ud x 
		\\
		& 
		= 
		\sum_{i,j,k = 1}^d \int_\Omega a_{i,j} x_k \partial_k ( (\partial_i y )(\partial_j y )) \, \ud x 
		+ 
		2 \int_\Omega A  \nabla y \cdot \nabla y \, \ud x  
		\\
		& 
		= 
		\int_\Omega x \cdot \nabla_x (  A  \nabla y \cdot \nabla y) \, \ud x 
		- 
		\int_\Omega ((x \cdot \nabla_x A)  \nabla y) \cdot \nabla y) \, \ud x
		+ 
		2 \int_\Omega A  \nabla y \cdot \nabla y \, \ud x  
		\\ 
		& = 
		\int_{\partial \Omega} (A \nabla y \cdot \nabla y)  x\cdot \nu \, \ud \sigma
		- 
		(d-2)
		\int_\Omega A  \nabla y \cdot \nabla y \, \ud x  
		- 
		\int_\Omega ((x \cdot \nabla_x A)  \nabla y) \cdot \nabla y) \, \ud x.
	\end{align*}
	Accordingly, we have the identity
	\begin{multline}
		\label{Multiplier-Identity-00-Wave}
		0 
		= 
		\int_\Omega \partial_t y \, x \cdot \nabla y \, \ud x\Big|_0^T 
		+ 
		\frac{d}{2}  
		\int_0^T \int_\Omega |\partial_t y|^2  \ud x\, \ud t
		 - \frac{1}{2} 
		\int_0^T \int_{\partial \Omega} |\partial_\nu y|^2 (A \nu \cdot \nu ) (x \cdot \nu) \, \ud\sigma \, \ud t
		\\
		- 
		\frac{(d-2)}{2} 
		\int_0^T \int_\Omega A  \nabla y \cdot \nabla y \, \ud x \, \ud t
		- 
		\frac{1}{2}\int_0^T \int_\Omega ((x \cdot \nabla_x A)  \nabla y) \cdot \nabla y) \, \ud x \, \ud t.
	\end{multline}
	
	Multiplying the equation \eqref{Wave-eq-intro} by $y$ and integrating in space and time, we also get
	\begin{equation}
		\label{Equipartition-Energy-Wave}
		0 =
		\int_\Omega \partial_t y y \, \ud x\Big|_0^T 
		-
		\int_0^T \int_\Omega |\partial_t y|^2 \, \ud x\,  \ud t
		+
		\int_0^T \int_\Omega A  \nabla y \cdot \nabla y \, \ud x \, \ud t.
	\end{equation}
	Given a real number $\lambda$,  which  will be chosen later, we 
    multiply  identity \eqref{Equipartition-Energy-Wave} by  $\lambda$ and add it to  identity \eqref{Multiplier-Identity-00-Wave}. We obtain
	\begin{multline}
		\label{Multiplier-Identity-0-Wave}
		0 
		= 
		\int_\Omega \partial_t y \, (x \cdot \nabla y + \lambda y ) \, \ud x\Big|_0^T 
		+ 
		\left(\frac{d}{2}  - \lambda\right)
		\int_0^T \int_\Omega |\partial_t y|^2  \ud x\, \ud t
		 - \frac{1}{2} 
		\int_0^T \int_{\partial \Omega} |\partial_\nu y|^2 (A \nu \cdot \nu ) (x \cdot \nu) \, \ud\sigma \, \ud t
		\\
		+ \left(\lambda- 
		\frac{(d-2)}{2} \right)
		\int_0^T \int_\Omega A  \nabla y \cdot \nabla y \, \ud x \, \ud t
		- 
		\frac{1}{2}\int_0^T \int_\Omega ((x \cdot \nabla_x A)  \nabla y) \cdot \nabla y) \, \ud x \, \ud t.
	\end{multline}
	Taking 
	\begin{equation}
		\label{Choice-Lambda}
		\lambda = \frac{d-1}{2}  + \frac{\beta_1}{4}, 
	\end{equation}
	and using \eqref{ObsCond-a-smooth}, we get 
	\begin{multline*}
		0 \geq  
		\int_\Omega \partial_t y \, (x \cdot \nabla y + \lambda y ) \, \ud x\Big|_0^T 
		+ 
		\left(\frac{1}{2}  - \frac{\beta_1}{4} \right)
		\left( \int_0^T \int_\Omega |\partial_t y|^2  \ud x\, \ud t + \int_0^T \int_\Omega A  \nabla y \cdot \nabla y \, \ud x \, \ud t \right)
		\\
		 - \frac{1}{2} 
		\int_0^T \int_{\partial \Omega} |\partial_\nu y|^2 (A \nu \cdot \nu ) (x \cdot \nu) \, \ud\sigma \, \ud t.
	\end{multline*}
	Using the bound \eqref{Ass-Symmetric-A}, the multiplier condition \eqref{ObsCond-a-smooth} and the fact that the energy is preserved (recall Theorem \ref{Thm-WP}), we obtain
	\begin{multline}
			\label{Multiplier-Est-1-Wave}
		T \left(\frac{1}{2}  - \frac{\beta_1}{4} \right)
		\left( \int_\Omega\left(A  \nabla y_0 \cdot \nabla y_0 + | y_1|^2 \right) \, \ud x \right)
		\leq 
		 \frac{\alpha}{2} 
		\int_0^T \int_{\Gamma_0} |\partial_\nu y|^2  (x \cdot \nu) \, \ud\sigma \, \ud t
		\\
		 + \left| 	\int_\Omega \partial_t y(T) \, (x \cdot \nabla y(T) + \lambda y(T) ) \, \ud x \right| 
		 + \left| 	\int_\Omega \partial_t y(0) \, (x \cdot \nabla y(0) + \lambda y(0) ) \, \ud x \right| .
	\end{multline}
	Now, remark that for all $z \in H^1_0(\Omega)$, 
	\begin{align}
		\| (x \cdot \nabla z + \lambda z ) \|_{L^2(\Omega)}^2
		& \leq 
		\int_\Omega \left( (x \cdot \nabla z)^2 + \lambda^2 |z|^2 +  \lambda x \cdot \nabla (|z|^2) \right) \ud x
		\notag
		\\
		& \leq  
		\int_\Omega \left( (x \cdot \nabla z)^2 + (\lambda^2 - d \lambda) |z|^2 \right) \ud x
		\notag
		\\
		\label{Est-x-nabla-z}
		& \leq 
		\int_\Omega (x \cdot \nabla z)^2  \ud x = \left(\sup_{x \in \Omega} |x|\right)^2 \|  \nabla z\|_{L^2(\Omega)}^2
		\leq \alpha \left(\sup_{x \in \Omega} |x|\right)^2 \int_\Omega A \nabla z \cdot \nabla z \, \ud x,  
	\end{align}
	where we used that, since $\beta_1 \in [0, 2)$ and $\lambda \in [0,d)$, 
    we have $\lambda^2 - d \lambda \leq 0$.
	
	Accordingly, 
	\begin{align}
		 \left| 	\int_\Omega \partial_t y(0) \, (x \cdot \nabla y(0) + \lambda y(0) ) \, \ud x \right| 
		& \leq 
		\sqrt{\alpha} \left(\sup_{x \in \Omega} |x|\right) \| \partial_t y(0) \|_{L^2(\Omega)} \| (A \nabla y (0)\cdot \nabla y(0))^{1/2} \|_{L^2(\Omega)} 
		 \notag
		\\
		 &
		 \leq 
		\frac{ \sqrt{\alpha}  \left(\sup_{x \in \Omega} |x|\right)}{2}\left( \int_\Omega \left(A  \nabla y_0 \cdot \nabla y_0 + | y_1|^2 \right) \, \ud x \right), 
		\label{Bound-BT-t=0}
	\end{align}
	and, using that the energy is constant, 
	\begin{align}
		 \left| 	\int_\Omega \partial_t y(T) \, (x \cdot \nabla y(T) + \lambda y(T) ) \, \ud x \right| 
		& \leq 
		\frac{ \sqrt{\alpha}  \left(\sup_{x \in \Omega} |x|\right)}{2} 
		\left( \int_\Omega \left( |\partial_t y(T)|^2  +  A  \nabla y(T) \cdot \nabla y(T) \right) \, \ud x \right)
		\notag
		\\
		&
		\leq
		\frac{ \sqrt{\alpha} \left(\sup_{x \in \Omega} |x|\right)}{2} \left( \int_\Omega \left(A  \nabla y_0 \cdot \nabla y_0 + | y_1|^2 \right) \, \ud x \right).
		\label{Bound-BT-t=T}
	\end{align}
	We thus deduce from \eqref{Multiplier-Est-1-Wave} and the above estimates that 
	$$
		\left( T \left(\frac{1}{2}  - \frac{\beta_1}{4}\right)-  \sqrt{\alpha}  \left(\sup_{x \in \Omega} |x|\right)   \right)
		\left( \int_\Omega\left(A  \nabla y_0 \cdot \nabla y_0 + | y_1|^2 \right) \, \ud x \right)
		\leq 
		 \frac{\alpha}{2} 
		\int_0^T \int_{\Gamma_0} |\partial_\nu y|^2  (x \cdot \nu) \, \ud\sigma \, \ud t.
	$$
	Therefore, for any $T$ satisfying \eqref{Cond-T}, 
	the estimate \eqref{Obs-Boundary} holds with the constant $C$ given by \eqref{Constant-Boundary}. This estimate has been proved for any solution $y$ of \eqref{Wave-eq-intro} with initial data in the space $H^2(\Omega) \cap H^1_0(\Omega) \times H^1_0(\Omega)$, which is dense in $H^1_0(\Omega) \times L^2(\Omega)$. Since all terms in the estimate \eqref{Obs-Boundary} depend continuously on the initial data in $H^1_0(\Omega) \times L^2(\Omega)$, the estimate \eqref{Obs-Boundary} holds for any solution $y$ of \eqref{Wave-eq-intro} with initial data in $H^1_0(\Omega) \times L^2(\Omega)$.
	
	To prove observability from a distributed set $\omega$ satisfying the assumptions of Proposition \ref{ObsC1}, we use a suitably cut-off version of the previous computations. Namely, we introduce a smooth ($\mathscr{C}^\infty$) cut-off function $\eta = \eta(x)$ 
    taking values in $[0,1]$, which is
    equal to $1$ in $\overline\Omega \setminus \omega$
    and
    vanishes  in a neighborhood of $\{ x \in \partial \Omega, \, x \cdot \nu >0 \}$. 
    For further use, we also assume that $\Supp(1-\eta) \subset \omega$. Then, instead of multiplying the equation \eqref{Wave-eq-intro} by $x \cdot \nabla y+ \lambda y$ as before, we multiply the equation \eqref{Wave-eq-intro} by $\eta(x) (x \cdot \nabla + \lambda y)$. Integrating in $x$ and $t$ we obtain
	\begin{multline}
		\label{Multiplier-Identity-0-Wave-Dis}
		0 
		= 
		\int_\Omega \eta \partial_t y \, (x \cdot \nabla y + \lambda y ) \, \ud x\Big|_0^T 
		+ 
		\left(\frac{d}{2}  - \lambda\right)
		\int_0^T \int_\Omega \eta |\partial_t y|^2  \ud x\, \ud t
		+ \frac{1}{2} \int_0^T \int_\Omega x \cdot \nabla \eta |\partial_t y|^2  \ud x\, \ud t
		\\
		 - \frac{1}{2} 
		\int_0^T \int_{\partial \Omega} \eta |\partial_\nu y|^2 (A \nu \cdot \nu ) (x \cdot \nu) \, \ud\sigma \, \ud t
		+ \left(\lambda- 
		\frac{(d-2)}{2} \right)
		\int_0^T \int_\Omega \eta A  \nabla y \cdot \nabla y \, \ud x \, \ud t
		\\
		- 
		\frac{1}{2}\int_0^T \int_\Omega \eta ((x \cdot \nabla_x A)  \nabla y) \cdot \nabla y) \, \ud x \, \ud t
		\\
		+ 
		\int_0^T \int_\Omega A  \nabla y \cdot \nabla \eta ( (x \cdot \nabla y + \lambda y )  \, \ud x \, \ud t
		- 
		\frac{1}{2}\int_0^T \int_\Omega x \cdot \nabla \eta ( A  \nabla y \cdot \nabla y) \, \ud x \, \ud t
		.
	\end{multline}
	Choosing $\lambda$ as in \eqref{Choice-Lambda}, using the bound \eqref{Ass-Symmetric-A}, the multiplier condition \eqref{ObsCond-a-smooth}, the bounds \eqref{Bound-BT-t=0}--\eqref{Bound-BT-t=T} and the fact that $\nabla \eta$ is supported in $\omega_0 := \Supp(1 - \eta) \cap \Omega$ and that $\eta = 0$ on $\{x \in \partial\Omega, \, x\cdot \nu > 0\}$, we get:
	\begin{multline*}
		\left(\frac{1}{2}  - \frac{\beta_1}{4}\right) \int_0^T \int_\Omega \eta \left(A  \nabla y \cdot \nabla y + |\partial_t y|^2 \right)\, \ud x \, \ud t
		\leq  \sqrt{\alpha}  \left(\sup_{x \in \Omega} |x|\right) 
		\left( \int_\Omega\left(A  \nabla y_0 \cdot \nabla y_0 + | y_1|^2 \right) \, \ud x \right)
		\\
		+ 
		\| \nabla \eta\|_{L^\infty} \left(\sup_{x \in \Omega} |x|\right) 
		\left( 
			\frac{1}{2} \int_0^T \int_{\omega_0}  |\partial_t y|^2\, \ud x \, \ud t
			+
			\frac{3}{2} \alpha \int_0^T \int_{\omega_0} |\nabla y|^2 \ud x \, \ud t
		\right).
	\end{multline*}
	Adding 
	$$
		\left(\frac{1}{2}  - \frac{\beta_1}{4}\right) \int_0^T \int_\Omega (1-\eta) \left(A  \nabla y \cdot \nabla y + |\partial_t y|^2 \right)\, \ud x \, \ud t
	$$
	on both sides and using that the energy is preserved, we deduce:
	\begin{multline*}
		T \left(\frac{1}{2}  - \frac{\beta_1}{4}\right)  \int_\Omega \left(A  \nabla y_0 \cdot \nabla y_0 + | y_1|^2 \right)\, \ud x
		\leq  \sqrt{\alpha} \left(\sup_{x \in \Omega} |x|\right) 
		\left( \int_\Omega\left(A  \nabla y_0 \cdot \nabla y_0 + | y_1|^2 \right)\, \ud x \right)
		\\
		+ 
		\| \nabla \eta\|_{L^\infty} \left(\sup_{x \in \Omega} |x|\right) 
		\left( 
			\frac{1}{2}  \int_0^T \int_{\omega_0}  |\partial_t y|^2\, \ud x \, \ud t
			+
			\frac{3}{2} \alpha \int_0^T \int_{\omega_0} |\nabla y|^2 \ud x \, \ud t
		\right)
		\\
		+ 
		 \left(\frac{1}{2}  - \frac{\beta_1}{4}\right)\int_0^T \int_{\omega_0} \left( \alpha | \nabla y|^2 + |\partial_t y|^2  \right)\, \ud x\, \ud t.
	\end{multline*}
	Therefore, for any $T$ satisfying \eqref{Cond-T}, there exists a constant $C$ depending on $T$, $\alpha$, $\beta_1$ and the geometry such that any solution $y$ of \eqref{Wave-eq-intro} with initial data in $H^1_0(\Omega) \times L^2(\Omega)$ satisfies
	\begin{equation*}
		\int_\Omega \left(A  \nabla y_0 \cdot \nabla y_0 + | y_1|^2 \right)\, \ud x
		\leq  
		C^2 \int_0^T \int_{\omega_0} \left( | \nabla y|^2 + |\partial_t y|^2 + |y|^2  \right)\, \ud x\, \ud t.
	\end{equation*}
	Let us finally remove the observation term in $|\nabla y|^2$ in $(0,T) \times \omega_0$. For $T$ satisfying condition \eqref{Cond-T}, we choose $T_0$ satisfying \eqref{Cond-T} and $T_0 < T$ and $\epsilon \in (0, T-T_0)$. Using the fact that the energy is preserved, we clearly have the existence of a constant $C_0$ depending on $T$, $\alpha$, $\beta_1$ and the geometry such that any solution $y$ of \eqref{Wave-eq-intro} with initial data in $H^1_0(\Omega) \times L^2(\Omega)$ satisfies
	\begin{equation}
		\label{Distributed-Obs-with-all-the-H1-norm}
		\int_\Omega \left(A  \nabla y_0 \cdot \nabla y_0 + | y_1|^2 \right)\, \ud x
		\leq  
		C^2 \int_{\epsilon}^{T_0 + \epsilon} \int_{\omega_0} \left( | \nabla y|^2 + |\partial_t y|^2 + |y|^2  \right)\, \ud x\, \ud t.
	\end{equation}
	We then take $\eta_0$ a smooth ($\mathscr{C}^\infty_{t,x}$) cut-off function such that $\eta_0 = 1$ in $(\epsilon, T_0 + \epsilon)\times \omega_0$ and compactly supported in $(0,T) \times \omega$ (since $\omega_0 = \Supp(1 - \eta)\cap \Omega$ is a closed set included in $\omega$). Then, multiplying the equation \eqref{Wave-eq-intro} by $\eta_0^2 y$, we get: 
	\begin{multline*}
		0 
		= 
		- 
		\int_0^T \int_\Omega \eta_0^2 |\partial_t y|^2 \, \ud x\, \ud t
		+
		\frac{1}{2}  
		\int_0^T \int_\Omega \partial_{tt} (\eta_0^2) |y|^2 \, \ud x\, \ud t
		\\
		+ 
		\int_0^T \int_\Omega \eta_0^2 A \nabla y \cdot \nabla y  \, \ud x\, \ud t
		+ 
		2 \int_0^T \int_\Omega \eta_0 A \nabla y \cdot \nabla \eta_0 \, y  \, \ud x\, \ud t.		
	\end{multline*}
	Accordingly, for all $\gamma >0$, 
	\begin{align*}
		&\frac{1}{\alpha} \int_0^T \int_\Omega \eta_0^2 |\nabla y |^2  \, \ud x\, \ud t
		 \leq 
		\int_0^T \int_\Omega \eta_0^2 A \nabla y \cdot \nabla y  \, \ud x\, \ud t
		\\
		&
		\leq
		\int_0^T \int_\Omega \eta_0^2 |\partial_t y|^2 \, \ud x\, \ud t
		+
		\frac{1}{2}  \| \partial_{tt} (\eta_0^2)\|_{L^\infty}
		  \int_0^T \int_{\omega\cap \Omega} |y|^2 \, \ud x\, \ud t
		+
		\alpha \int_0^T \int_\Omega\left( \frac{1}{\gamma} \eta_0^2 |\nabla y|^2 + \gamma |\nabla \eta_0|^2 |y|^2\right)\, \ud x\, \ud t.
	\end{align*}
	Choosing $\gamma = 2 \alpha^2$, we deduce 
	\begin{align*}
		\frac{1}{2\alpha} \int_0^T \int_\Omega \eta_0^2 |\nabla y |^2  \, \ud x\, \ud t
		\leq
		\int_0^T \int_\Omega \eta_0^2 |\partial_t y|^2 \, \ud x\, \ud t
		+
		\left( \frac{1}{2}  \| \partial_{tt} (\eta_0^2)\|_{L^\infty} + 2 \alpha^3 \| \nabla \eta_0\|^2_{L^\infty} \right)
		\int_0^T \int_{\omega\cap \Omega} |y|^2 \, \ud x\, \ud t.
	\end{align*}
	Since $\eta_0 = 1$ in $(\epsilon, T_0+\epsilon) \times \omega_0$, we deduce the observability property \eqref{Obs-Distributed-smooth} immediately from \eqref{Distributed-Obs-with-all-the-H1-norm} and the previous estimate.
\end{proof}

\subsubsection{The case of coefficients in $L^\infty$: Proof of Theorem \ref{MainThmOBS} and Theorem \ref{MainThmOBS-distributed}}

Applying the approximation arguments we have developed, it is now easy to deduce Theorems \ref{MainThmOBS} and \ref{MainThmOBS-distributed}.

\begin{proof}[Proof of Theorem \ref{MainThmOBS}.]
	Let us consider $A\in L^\infty(\Omega; \R^{d\times d})$ satisfying the assumptions \ref{Assumption1} and such that $ A$ is in $W^{1,\infty}$ in a neighborhood $\mathcal{V}$ of $\{ x\in \partial\Omega, \, x \cdot \nu >0\}$.
	
	Then, for $n \in \N$ sufficiently large, we define $A_n$ as the one given by Proposition \ref{Prop-Approx-a} for $\varepsilon = 1/2^n$. 

	Using Proposition \ref{Prop-Approx-a}, since all these matrices $(A_n)_{ n \geq n_0}$ satisfy the assumptions of Proposition \ref{ObsC1} with $\beta_1=\beta$, and $\sup_{n \geq n_0} \| \nabla A_n\|_{L^\infty( \mathcal{V})} < \infty$. Accordingly, we obtain uniform (in $n \geq n_0$) obser\-vability inequalities for the equations \eqref{Wave-eq-a-n}. More precisely, for all $T$ satisfying \eqref{Cond-T}, there exists a constant $C$ such that for all $n \geq n_0$, any solution $y_n$ of \eqref{Wave-eq-a-n} with initial data in $H^1_0(\Omega) \times L^2(\Omega)$ satisfies
	$$
		\int_\Omega \left(A_n \nabla y_0 \cdot \nabla y_0 + |y_1|^2 \right) \, \ud x
		\leq
		C^2 
		\int_0^T \int_{\Gamma_0} |\partial_\nu y_n|^2\, \ud \sigma \, \ud t.  
	$$
	
	We can then pass to the limit $n \to \infty$ in the above estimates, using Theorem \ref{Thm-Regularization} and Corollary \ref{Cor-Conv-Normal-Trace}, and we get the observability inequality \eqref{Obs-Boundary} for any solution $y$ of \eqref{Wave-eq-intro} with initial data in $H^1_0(\Omega) \times L^2(\Omega)$. 
\end{proof}

\begin{proof}[Proof of Theorem \ref{MainThmOBS-distributed}.]
	Arguing as in the proof of Theorem \ref{MainThmOBS}, we easily deduce that for all $T$ satisfying \eqref{Cond-T}, there exists a constant $C$ such that any solution $y$ of of \eqref{Wave-eq-intro} with initial data in $H^1_0(\Omega) \times L^2(\Omega)$ satisfies:
	\begin{equation}
		\label{Obs-Distributed-With-A-Compact-Term}
		\int_\Omega \left(A \nabla y_0 \cdot \nabla y_0 + |y_1|^2 \right) \, \ud x
		\leq
		C_\omega^2 
		\int_0^T \int_{\omega\cap \Omega} \left( |\partial_t y|^2 + |y|^2\right)\, \ud x \, \ud t.  
	\end{equation}
	Now, to get the observability estimate \eqref{Obs-Distributed}, we should remove the compact term 	$$
		\int_0^T \int_{\omega\cap \Omega} |y|^2\, \ud x \, \ud t.
	$$  
	In order to do that, we follow the classical compactness-uniqueness method and apply the strategy of \cite[Section 3.2]{Dehman-Erv-2016}. 
	
	Let $T >0$ satisfy \eqref{Cond-T} and assume that the observability estimate \eqref{Obs-Distributed} is false, i.e. that there exists a sequence of initial data $(y_{0,n}, y_{1,n}) \in H^1_0(\Omega) \times L^2(\Omega)$ such that 
	\begin{equation}
		\label{ByContradiction}
		\forall n \in \N, \quad \int_\Omega \left(A \nabla y_{0,n} \cdot \nabla y_{0,n} + |y_{1,n}|^2 \right) \, \ud x = 1
		\quad
		\text{ and } 
		\quad 
		\lim_{n \to \infty} \int_0^T \int_{\omega\cap \Omega} |\partial_t y_n|^2 \, \ud x \, \ud t = 0, 
	\end{equation}
	where $y_n$ denotes the solution of \eqref{Wave-eq-intro} with initial datum $(y_{0,n}, y_{1,n})$. 

	Then we have the weak convergence of $((y_{0,n}, y_{1,n}))_{n \in \N}$ to some $(y_0,y_1)$ in $H^1_0(\Omega) \times L^2(\Omega)$ (up to a subsequence still denoted the same), and of the solution $(y_n)_{n \in \N}$ of \eqref{Wave-eq-intro} to the solution $y$ of \eqref{Wave-eq-intro} in $H^1((0,T) \times \Omega)$ with initial datum in $H^1_0(\Omega) \times L^2(\Omega)$. Besides, we have by assumption that $\partial_t y = 0$ in $(0,T) \times (\omega\cap \Omega)$. 
	
	Now, we choose $T_0$ satisfying \eqref{Cond-T} and $T_0 < T$. Then, for $\tau \in (0, T - T_0)$, we define the function $z_{\tau}$ by 
	$$
		z_{\tau} (t, x) = \frac{y(t+\tau, x) - y(t,x)}{\tau} , \qquad  ((t,x) \in (0,T) \times \Omega). 
	$$
	For all $\tau \in (0,T-T_0)$, $z_\tau$ solves the equation \eqref{Wave-eq-intro} in $(0,T_0) \times \Omega$, belongs to $\mathscr{C}^0([0,T_0]; H^1_0(\Omega)) \cap \mathscr{C}^1([0,T_0]; L^2(\Omega))$, and,  since $\partial_t y = 0$ in $(0,T) \times (\omega\cap \Omega)$, $z_\tau = 0$ in $(0,T_0) \times (\omega\cap \Omega)$. We can thus apply the observability estimate \eqref{Obs-Distributed-With-A-Compact-Term} in time $T_0$ to $z_\tau$ for any $\tau \in (0,T-T_0)$. We thus deduce that $z_\tau = 0$ in $(0,T_0) \times \Omega$ for any $\tau \in (0,T-T_0)$. Passing to the limit $\tau \to 0$, we deduce that $\partial_t y = 0$ in $(0,T_0) \times \Omega$, i.e. $y(t,x) = y_0(x)$ for all $(t,x) \in [0,T_0]\times \Omega$. From the equation \eqref{Wave-eq-intro}, we deduce that $y_0$ satisfies
	$$
		- \div (A \nabla y_0) = 0 \text { in } \Omega. 
	$$
	But $y_0$ belongs to $H^1_0(\Omega)$, so that multiplying this equation by $y_0$, we see that $y_0 = 0$. Thus, for all $T_0$ satisfying \eqref{Cond-T} and $T_0 < T$, $y$ vanishes identically on $(0,T_0) \times \Omega$. It follows that $y$ vanishes identically on $(0,T) \times \Omega$. 
	
	Now, the sequence $(y_n)_{n \in \N}$ weakly converges to $y= 0$ in $H^1((0,T) \times \Omega)$, hence, by compactness, the sequence $(y_n)_{n \in \N}$ strongly converges to $y= 0$ in $L^2((0,T) \times \Omega)$. The sequence in \eqref{ByContradiction} would thus contradict the estimate \eqref{Obs-Distributed-With-A-Compact-Term}. 
\end{proof}
%%%%%%%%%
\subsection{Observability properties of the plate equation}
\label{Subsec-Obs-Plates}

In this section, we adapt the strategy developed for the waves to the case of the plate equation \eqref{Plate-Eq} with coefficient $a \in L^\infty(\Omega)$.

In order to do so, as in the case of the wave equation, we start with the case of smooth coefficients $a \in \mathscr{C}^1$ satisfying a multiplier condition, proving uniform observability properties, possibly with some compact loss, for a large class of coefficients. We will then pass to the limit in the observability properties, and finally absorb the compact losses at the end on the limiting equation.

%Before going further let us recall that the plate equations \eqref{Plate-Eq} with coefficient $a \in L^\infty(\Omega; \R)$ bounded from below by a positive constant is well-posed. Solutions $y$ of \eqref{Plate-Eq} with initial data in $H^2_0(\Omega) \times L^2(\Omega)$ belong to $\mathscr{C}^0([0,T]; H^2_0(\Omega))\cap \mathscr{C}^1([0,T]; L^2(\Omega))$. Furthermore the energy of the solutions $y$ of \eqref{Plate-Eq}, defined for $t \in [0,T]$ by 
%\begin{equation}
%    E(t) = \frac{1}{2} \int_{\Omega} \left( a(x)|\Delta y(t,x)|^2+ |\partial_t y(t,x)|^2  \right) \ud\,x
%\end{equation}
%does not depend on $t$.
%

\subsubsection{Observability type estimates for smooth coefficients}

We start with the following result, giving observability estimates with compact loss for smooth coefficients satisfying multiplier type conditions:
\begin{proposition}
	\label{Prop-Plate-Multiplier-Smooth}
	Let $\Omega$ be a non-empty open subset of $\R^d$ of class $\mathscr{C}^2$. Let $a \in \mathscr{C}^1(\overline\Omega; \R)$ satisfying, for some $\alpha >0$, $1/\alpha \leq a \leq \alpha $ in $\Omega$ and, for some $\beta \in [0,4) $, the condition $x\cdot  \nabla a - \beta a \leq 0$ in $\Omega$. 
	
	Let $\Gamma_0$ be defined as in \eqref{Multiplier-Set-Gamma}. Then for any $T>0$ there exists $C_{\Gamma_0}>0$ depending only on $\alpha $, $\beta$ and the geometry (i.e. of the domain $\Omega$ and $\Gamma_0$) such that any solution $y$ of \eqref{Plate-Eq} with initial datum $(y_0, y_1) \in H^2_0(\Omega) \times L^2(\Omega)$  satisfies
	\begin{equation}
		\label{Obs-Plates-Boundary-smooth}
		\int_\Omega \left(a |\Delta y_0|^2 + |y_1|^2\right) \, \ud x 
		\leq 
		C_{\Gamma_0}^2 \int_0^T \int_{\Gamma_0} a |\Delta y|^2\, \ud \sigma\, \ud t
		 + 
		C_{\Gamma_0}^2 \left(\|\nabla y_0\|_{L^2(\Omega)}^2+ \|\nabla y(T)\|_{L^2(\Omega)}^2\right).
	\end{equation}
	Moreover, for $T$ large enough depending only on $\alpha$, $\beta$ and the geometry, there exists $C_{obs, \Gamma}$ depending only on $\alpha $, $\beta$ and the geometry such that any solution $y$ of \eqref{Plate-Eq} with initial datum $(y_0, y_1) \in H^2_0(\Omega) \times L^2(\Omega)$  satisfies
	\begin{equation}
		\label{Obs-Plates-Boundary-smooth-large-time}
		\int_\Omega \left(a |\Delta y_0|^2 + |y_1|^2\right) \, \ud x 
		\leq 
		C_{\Gamma_0}^2 \int_0^T \int_{\Gamma_0} a |\Delta y|^2\, \ud \sigma\, \ud t.
	\end{equation}
	
	Similarly, if $\omega$ is a non-empty open subset of $\Omega$ such that $\omega$ is a neighborhood of $\{ x \in \partial \Omega, \, x \cdot \nu >0 \}$, for any $T>0$ and $T_0, T_1$ such that  $0 < T_0 < T_1 < T$, there exists a constant $ C_\omega > 0$, depending only on $\alpha $, $\beta$, $T_0$, $T_1$, $T$ and the geometry (i.e. of the domains $\omega$ and $ \Omega$) such that any solution $y$ of equation \eqref{Wave-eq-intro} with initial data $(y_0,y_1) \in H_0^2(\Omega) \times L^2(\Omega)$ satisfies 
	\begin{multline}\label{Obs-Plates-Distributed-smooth}
		\int_\Omega \left(a |\Delta y_0|^2 + |y_1|^2\right) \, \ud x 
	\leq 
		 C^2_\omega \int_0^T\int_{\omega\cap \Omega}\left( \left| \partial_t y\right|^2 +  \left| \nabla y\right|^2 +  \left| y\right|^2 \right)
	 	  \, \ud x\, \ud t
	\\
		 + 
		 C_\omega^2 \left(\|\nabla y(T_0)\|_{L^2(\Omega)}^2+ \|\nabla y(T_1)\|_{L^2(\Omega)}^2\right).  
	\end{multline}
	Furthermore, for $T$ large enough depending only on $\alpha$, $\beta$ and the geometry, there exists a constant $C_{obs,\omega}>0$ depending only on $\alpha $, $\beta$ and the geometry such that any solution $y$ of \eqref{Plate-Eq} with initial datum $(y_0, y_1) \in H^2_0(\Omega) \times L^2(\Omega)$  satisfies
	\begin{equation}
		\label{Obs-Plates-Dis-smooth-large-time}
		\int_\Omega \left(a |\Delta y_0|^2 + |y_1|^2\right) \, \ud x 
		\leq 
		 C^2_{obs,\omega} \int_0^T\int_{\omega\cap \Omega}\left( \left| \partial_t y\right|^2 +  \left| \nabla y\right|^2 + +  \left| y\right|^2 \right)
	 	  \, \ud x\, \ud t.
	\end{equation}

\end{proposition}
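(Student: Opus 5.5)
We follow the scheme of Proposition \ref{ObsC1}: multiply \eqref{Plate-Eq} by $x\cdot\nabla y+\lambda y$, choose $\lambda$ so that the bulk terms are controlled by the condition $x\cdot\nabla a-\beta a\le 0$, and estimate the boundary-in-time terms. Throughout we take data $(y_0,y_1)$ in the domain of the operator, so that all integrations by parts are justified, and we conclude by density.

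\textbf{Multiplier identity.} The time part gives, as for waves,
\[
\int_\Omega\partial_t y\,(x\cdot\nabla y+\lambda y)\Big|_0^T+(\tfrac d2-\lambda)\iint|\partial_t y|^2 .
\]
For the spatial part we integrate $\Delta(a\Delta y)\,(x\cdot\nabla y)$ by parts twice. We use $\Delta(x\cdot\nabla y)=x\cdot\nabla\Delta y+2\Delta y$ together with the clamped conditions $y=\partial_\nu y=0$. The latter give $\nabla y=0$ on $\partial\Omega$, $x\cdot\nabla y=0$ and $\partial_\nu(x\cdot\nabla y)=(x\cdot\nu)\Delta y$ on $\partial\Omega$. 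This produces
\[
\int_\Omega a\Delta y\,\bigl(x\cdot\nabla\Delta y+2\Delta y\bigr)-\int_{\partial\Omega}a\Delta y\,(x\cdot\nu)\Delta y .
\]
Writing $a\Delta y\,x\cdot\nabla\Delta y=\tfrac12 a\,x\cdot\nabla|\Delta y|^2$ and integrating once more yields
\[
-\tfrac12\int_\Omega(d\,a+x\cdot\nabla a)|\Delta y|^2+2\int_\Omega a|\Delta y|^2-\tfrac12\int_{\partial\Omega}a|\Delta y|^2(x\cdot\nu).
\]
The term $\lambda\int_\Omega \Delta(a\Delta y)\,y=\lambda\int_\Omega a|\Delta y|^2$ is added to this. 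The coefficient of $\int a|\Delta y|^2$ is therefore $2-\tfrac d2+\lambda$, plus $-\tfrac12\int x\cdot\nabla a|\Delta y|^2\ge -\tfrac\beta2\int a|\Delta y|^2$.

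\textbf{Choice of $\lambda$ and the large-time estimates.} Take $\lambda=\tfrac d2-1+\tfrac\beta4$. Then both the kinetic coefficient $\tfrac d2-\lambda$ and the potential coefficient $2-\tfrac d2+\lambda-\tfrac\beta2$ equal $1-\tfrac\beta4>0$, because $\beta<4$. We obtain
\[
(1-\tfrac\beta4)\,T\,\|(y_0,y_1)\|^2_{E}\le \tfrac12\iint_{\Gamma_0}a|\Delta y|^2(x\cdot\nu)+\Bigl|\int_\Omega\partial_t y\,(x\cdot\nabla y+\lambda y)\Big|_0^T\Bigr| .
\]
The boundary-in-time terms are bounded by $C\|\partial_t y\|_{L^2}\|y\|_{H^1}\le C\,E$, using $\|\nabla y\|_{L^2}\lesssim\|\Delta y\|_{L^2}$ on $H^2_0$ and conservation of energy. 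Taking $T$ large then gives \eqref{Obs-Plates-Boundary-smooth-large-time}.

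\textbf{Arbitrary $T$.} Instead of absorbing, we bound the end terms by $\delta\,E+C_\delta\|\nabla y(t)\|^2_{L^2}$ for $t\in\{0,T\}$; a small $\delta$ gives \eqref{Obs-Plates-Boundary-smooth}. The delicate point is that the kinetic energy sits against $x\cdot\nabla y$, which is only $H^1$-controlled, so the splitting must go through Young's inequality with $\|\nabla y(t)\|_{L^2}$. This is fine because $|x\cdot\nabla y+\lambda y|\lesssim|\nabla y|+|y|$ and Poincar\'e's inequality controls $y$ by $\nabla y$.

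\textbf{Distributed observation.} We use the cut-off version with $\eta$ as in Proposition \ref{ObsC1}, integrating over $(T_0,T_1)$. The new commutator terms are supported in $\omega_0=\Supp(1-\eta)$. Those involving $|\Delta y|^2$, $\nabla\Delta y$ and second derivatives are the main obstacle: the right-hand side only contains $|\partial_t y|^2+|\nabla y|^2+|y|^2$ on $\omega$.

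To remove them we plan a local energy estimate. We multiply the equation by $\eta_0^2 y$, with $\eta_0$ a space-time cut-off equal to $1$ on $(T_0,T_1)\times\omega_0$ and supported in $(0,T)\times\omega$. This gives $\iint\eta_0^2a|\Delta y|^2\lesssim\iint_\omega(|\partial_t y|^2+|\nabla y|^2+|y|^2)$ plus an $\epsilon$-absorbable term, after integrating by parts the terms $\Delta(\eta_0^2)$ and $\nabla\eta_0^2\cdot\nabla\Delta y$. The remaining third-order terms are handled by placing the derivatives onto the cut-off and using interpolation, with nested cut-offs if needed.

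This yields \eqref{Obs-Plates-Distributed-smooth}, where the end-point terms now appear at times $T_0$ and $T_1$. For $T$ large these end-point terms are absorbed as before, which gives \eqref{Obs-Plates-Dis-smooth-large-time}. All constants depend only on $\alpha$, $\beta$, the cut-offs and the geometry.
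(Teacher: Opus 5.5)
Your boundary-observation argument is the paper's: the same multiplier $x\cdot\nabla y+\lambda y$, the same $\lambda=\frac d2-1+\frac\beta4$, Young's inequality against $\|\nabla y(t)\|_{L^2(\Omega)}$ for arbitrary $T$, and $\|\nabla z\|_{L^2}\lesssim\|\Delta z\|_{L^2}$ on $H^2_0(\Omega)$ for large $T$. The distributed part follows the same scheme, but as written one term is left uncontrolled.

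First, the third-order terms you worry about never appear. If you integrate $\tfrac12\eta a\,x\cdot\nabla|\Delta y|^2$ by parts exactly as in your global computation, the only commutators are $-\tfrac12 a(x\cdot\nabla\eta)|\Delta y|^2$ and $a\Delta y\,[2\nabla\eta\cdot\nabla(x\cdot\nabla y+\lambda y)+\Delta\eta\,(x\cdot\nabla y+\lambda y)]$. The actual obstacle is in the second commutator. It contains $a\Delta y\,\partial_i\eta\,x_j\partial_{ij}y$, so you need $\int_{T_0}^{T_1}\int_{\omega_0}|D^2 y|^2$, i.e.\ the \emph{full} Hessian. Your $\eta_0^2 y$ multiplier only controls $\iint\eta_0^2|\Delta y|^2$. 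Interpolation or nested cut-offs do not turn $\Delta y$ into $D^2 y$.

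The missing ingredient, which the paper uses, is the identity $\int_\Omega|D^2 z|^2\,\ud x=\int_\Omega|\Delta z|^2\,\ud x$ for $z\in H^2_0(\Omega)$. It follows from two integrations by parts and density. Apply it for each $t$ to $z=\eta_0 y(t)\in H^2_0(\Omega)$, together with $|\Delta(\eta_0 y)|\le\eta_0|\Delta y|+C(|\nabla y|+|y|)$. Since $\eta_0=1$ on $(T_0,T_1)\times\omega_0$, this bounds the Hessian term by the right-hand side of \eqref{Obs-Plates-Distributed-smooth}.

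Also, in the local estimate, do not integrate by parts across $a$. Your mention of $\nabla\eta_0^2\cdot\nabla\Delta y$ suggests you split $\nabla(a\Delta y)$, which produces $\nabla a$. The constants must depend only on $\alpha$, $\beta$ and the geometry, because the proposition is later applied to the regularized $a_\varepsilon$, whose gradients are not uniformly bounded. Instead, keep the form $\int_\Omega a\Delta y\,\Delta(\eta_0^2 y)\,\ud x$ and take $\eta_0=\theta^2$ with $\theta$ a cut-off. Then $|\nabla(\eta_0^2)|+|\Delta(\eta_0^2)|\le C\eta_0$. All cross terms are then absorbed by Young's inequality into $\frac{1}{2\alpha}\iint\eta_0^2|\Delta y|^2$ plus $C\iint_{\omega\cap\Omega}(|\nabla y|^2+|y|^2)$, with no dependence on $\nabla a$.
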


 \begin{proof}
 	Let $y$ of \eqref{Plate-Eq} be a solution of \eqref{Plate-Eq} with initial data in $H^2_0(\Omega) \times L^2(\Omega)$, and multiply the equation by $x \cdot \nabla y + \lambda y$, for $\lambda \in \R$ to be determined, and do integration by parts\footnote{To make the computations fully rigorous, we should start with solutions $y$ of \eqref{Plate-Eq} with initial data in $H^4 \cap H^2_0(\Omega) \times H^2_0(\Omega)$ and then use the density of $H^4 \cap H^2_0(\Omega) \times H^2_0(\Omega)$ in $H^2_0(\Omega) \times L^2(\Omega)$ and the continuity of each terms in \eqref{Obs-Plates-Boundary-smooth} and \eqref{Obs-Plates-Distributed-smooth}, similarly as for the waves: we skip the details to avoid repetition.}:
	\begin{multline}\label{Plate-Mult-Identity}
	    \int_{\Omega} \partial_t y (x \cdot \nabla y + \lambda y) \Big|_0^T \ud x + \left( \frac{d}{2} - \lambda \right) \int_{0}^{T} \int_{\Omega} |\partial_t y|^2 \dt\, \dx 
	    \\
	    + \int_{0}^{T} \int_{\Omega} |\Delta y|^2 \left[ a \left( 2 + \lambda - \frac{d}{2} \right) - \frac{1}{2} x \cdot \nabla a \right] \dt\, \dx = \frac{1}{2} \int_{0}^{T} \int_{\partial \Omega} a(x \cdot \nu) |\Delta y|^2 \ud \sigma \dt.
	\end{multline}
	 Let us remark that during the computations we used that the clamped conditions $y = \partial_\nu y = 0$ on the boundary imply  $D^2 y\ n = \Delta y \cdot n$, leading to the integral of the boundary  in the right hand side of \eqref{Plate-Mult-Identity}. 

	Using that $x \cdot \nabla a \leq \beta a$ in $\Omega$ with $\beta \in [0,4)$, we choose
	$$
		\lambda = \frac{d}{2} - 1+ \frac{\beta}{4}, 
	$$
	so that estimate \eqref{Plate-Mult-Identity} gives:
	\begin{equation}\label{Plate-Mult-Est}
	    \int_{\Omega} \partial_t y (x \cdot \nabla y + \lambda y)\ud x  \Big|_0^T 
	    \\
	    +  T \left( 1 - \frac{\beta}{4} \right) \int_{\Omega} \left(a|\Delta y_0|^2 + |y_1|^2\right) \dx \leq \frac{1}{2} \int_{0}^{T} \int_{\partial \Omega} a(x \cdot \nu) |\Delta y|^2 \ud \sigma \dt.
	\end{equation}
	Finally, note that, using Poincaré's estimate and the fact that $\Omega$ is bounded, there exists $C_0$ (depending on $\Omega$ and $\beta$) such that for all $z \in H^1_0(\Omega)$, $\| x \cdot \nabla z +\lambda z\|_{L^2(\Omega)} \leq \| x \cdot \nabla z\|_{L^2(\Omega)} + |\lambda |\| z\|_{L^2(\Omega)} \leq C_0 \| \nabla z\|_{L^2(\Omega)}$. Hence 
	\begin{align}
		&
		\left|  
			\int_{\Omega} \partial_t y (x \cdot \nabla y + \lambda y) \ud x  \Big|_0^T
		\right|
		\leq 
		C_0\sup_{t \in \{0,T\}}\{ \| \partial_t y(t)\|_{L^2} \}
		\left(\|\nabla y_0\|_{L^2(\Omega)}+ \|\nabla y(T)\|_{L^2(\Omega)}\right)
		\notag \\
		&\quad \leq 
		C_0 \left( \int_{\Omega} \left(a|\Delta y_0|^2 + |y_1|^2\right) \dx \right)^{1/2} \left(\|\nabla y_0\|_{L^2(\Omega)}+ \|\nabla y(T)\|_{L^2(\Omega)}\right)
		\notag
		\\ 
		& \quad \leq
		\frac{T}{2} \left( 1 - \frac{\beta}{4} \right) \int_{\Omega} \left(a|\Delta y_0|^2 + |y_1|^2\right) \dx
		+ 
		\frac{C_0^2 }{2T(1 -  \beta/4)} \left(\|\nabla y_0\|_{L^2(\Omega)}+ \|\nabla y(T)\|_{L^2(\Omega)}\right)^2
		\label{Bound-Boundary-Plate}.
	\end{align}
	Estimate \eqref{Obs-Plates-Boundary-smooth} follows immediately from \eqref{Plate-Mult-Est} and the previous estimate.
	
	To derive estimate \eqref{Obs-Plates-Boundary-smooth-large-time}, we use the following Poincaré type estimate: there exists $C_P>0$, depending on $\Omega$, such that for all $z \in H^2_0(\Omega)$, $\|\nabla z \|_{L^2(\Omega)} \leq C \| \Delta z\|_{L^2(\Omega)}$, so that 
	\begin{align}
		\left|  
			\int_{\Omega} \partial_t y (x \cdot \nabla y + \lambda y) \ud x  \Big|_0^T
		\right|
		& \leq 
		C_0 C_P \left( \int_{\Omega} \left(a|\Delta y_0|^2 + |y_1|^2\right) \dx \right)^{1/2} \left(\|\Delta y_0\|_{L^2(\Omega)}+ \|\Delta y(T)\|_{L^2(\Omega)}\right)
		\notag
		\\ 
		 & \leq
		4 C_0 C_P \sqrt{\alpha}  \left( \int_{\Omega} \left(a|\Delta y_0|^2 + |y_1|^2\right) \dx \right).
		\label{Bound-For-Large-Time-Using-Poincare}
	\end{align}
	Accordingly, if $T$ is large enough to get $T( 1- \beta/4) > 4 C_0 C_P$, we get \eqref{Obs-Plates-Boundary-smooth-large-time} by combining this previous estimate with \eqref{Plate-Mult-Est}.
	
	Finally, to derive the estimate \eqref{Obs-Plates-Distributed-smooth} with $\omega$ a neighborhood of $\{x \in \partial \Omega, \, x \cdot \nu >0 \}$, as for the wave equation, we multiply the equation \eqref{Plate-Eq} by $\eta (x \cdot \nabla y + \lambda y)$ for $\lambda$ as above and $\eta$ a smooth cut-off function taking value $1$ in $\overline\Omega \setminus \omega$ and taking value in $[0,1]$: Calling $\omega_0 = \Supp ( 1- \eta)\cap \Omega$, that we can suppose to be such that $\overline\omega_0 \subset \omega$, this yields
	\begin{multline*}
		\int_{\Omega} \eta \partial_t y (x \cdot \nabla y + \lambda y) \ud x  \Big|_0^T
		+ 
		\left( 1 - \frac{\beta}{4} \right) \int_{0}^{T} \int_{\Omega} \eta (a|\Delta y|^2 + |\partial_t y|^2) \, \ud x\, \ud t 
		\\
		\leq C \int_0^T \int_{\omega_0} \left(|\partial_t y|^2 + |D^2 y|^2 + |\nabla y|^2 + |y|^2 \right)\, \ud x\, \ud t. 
	\end{multline*}
	We can then add the term 
	$$
	\left( 1 - \frac{\beta}{4} \right) \int_0^T \int_\Omega (1- \eta) (a |\Delta y|^2 + |\partial_t y|^2) \, \ud x \, \ud t.
	$$
	on both sides to get
	\begin{multline*}
		\int_{\Omega} \eta \partial_t y (x \cdot \nabla y + \lambda y) \ud x  \Big|_0^T
		+ 
		T \left( 1 - \frac{\beta}{4} \right)  \int_{\Omega} (a|\Delta y_0|^2 + |y_1|^2) \, \ud x\, \ud t 
		\\
		\leq C \int_0^T \int_{\omega_0} \left(|\partial_t y|^2 + |D^2 y|^2 + |\nabla y|^2 + |y|^2 \right)\, \ud x\, \ud t. 
	\end{multline*}
	For sufficiently large $T$, using \eqref{Bound-For-Large-Time-Using-Poincare}, we obtain the existence of a constant $C$ (with both $T$ and $C$ depending only on $\alpha$, $\beta$ and the geometry) such that any solution $y$ of \eqref{Plate-Eq} satisfies
	\begin{equation}
		\label{Est-Large-Time-Plate}
		\int_{\Omega} (a|\Delta y_0|^2 + |y_1|^2) \, \ud x\, \ud t 
		\\
		\leq C \int_0^T \int_{\omega_0} \left(|\partial_t y|^2 + |D^2 y|^2 + |\nabla y|^2 + |y|^2 \right)\, \ud x\, \ud t. 
	\end{equation}

	If we use the bound \eqref{Bound-Boundary-Plate} instead, we deduce that for all $T>0$, there exists a constant $C>0$ such that for any solution $y$ of \eqref{Plate-Eq},
	\begin{multline*}
		 \int_{\Omega} (a|\Delta y_0|^2 + |y_1|^2) \, \ud x
		 \leq 
		  C \int_0^T \int_{\omega_0} \left(|\partial_t y|^2 + |D^2 y|^2 + |\nabla y|^2 + |y|^2 \right)\, \ud x\, \ud t + C\left( \| \nabla y_0 \|_{L^2(\Omega)}^2 + \| \nabla y_1\|_{L^2(\Omega)}^2 \right). 
	\end{multline*}
	Now, let $T_0$ and $T_1$ be such that   $0 < T_0<T_1 <T$. By time invariance and preservation of the energy, the previous estimate (applied on $(T_0, T_1)$) yield the existence of a constant $C>0$ such that for any solution $y$ of \eqref{Plate-Eq},
	\begin{multline}
		\label{Mult-Est-Plate-Compact-terms-to-estimate}
		 \int_{\Omega} (a|\Delta y_0|^2 + |y_1|^2) \, \ud x
		\\
		 \leq 
		  C \int_{T_0}^{T_1}  \int_{\omega_0} \left(|\partial_t y|^2 + |D^2 y|^2 + |\nabla y|^2 + |y|^2 \right)\, \ud x\, \ud t + C\left( \| \nabla y(T_0) \|_{L^2(\Omega)}^2 + \| \nabla y(T_1) \|_{L^2(\Omega)}^2 \right). 
	\end{multline}
	 Taking $\eta_0$ a smooth cut-off function taking value $1$ in $(T_0,T_1) \times \omega_0$ and compactly supported in $(0,T) \times \omega$, multiplying the equation \eqref{Plate-Eq} by $\eta_0^2 y$ we easily get
	$$
		 \int_{0}^{T} \int_{\omega\cap \Omega} \eta_0^2 |\Delta y |^2 \, \ud x\, \ud t
		 \leq 
		 C \int_0^T \int_{\omega \cap \Omega} \left(|\partial_t y|^2 + |\nabla y|^2 + |y|^2 \right)\, \ud x\, \ud t.
	$$
	Now, writing $|\Delta (\eta_0 y) | \leq \eta_0 |\Delta y| + C (|\nabla y| + |y|)$ in $(0,T)\times \omega$ and the boundary conditions satisfied by $\eta_0 y$, elliptic regularity gives
	$$
		 \int_{0}^{T} \int_{\omega\cap \Omega}  |D^2(\eta_0 y) |^2 \, \ud x\, \ud t
		 \leq
		 C \int_{0}^{T} \int_{\omega\cap \Omega}  |\Delta(\eta_0 y) |^2 \, \ud x\, \ud t
		 \leq 
		 C \int_0^T \int_{\omega\cap \Omega} \left(|\partial_t y|^2 + |\nabla y|^2 + |y|^2 \right)\, \ud x\, \ud t.
	$$
	Combining this estimate with \eqref{Mult-Est-Plate-Compact-terms-to-estimate}, we easily deduce estimate \eqref{Obs-Plates-Distributed-smooth}.
	
	To get estimate \eqref{Obs-Plates-Dis-smooth-large-time}, we just combine this last estimate with \eqref{Est-Large-Time-Plate}.
 \end{proof}
 
 \subsubsection{Observability type estimates for rough coefficients as in Theorem \ref{MainThmOBS-Plates}}
 
 As a consequence of Theorem \ref{Thm-Regularization-Plate} and the approximation result in Proposition \ref{Prop-Approx-a} (see also Remark \ref{Rk-Approx-For-Plates}), we easily deduce the following result for coefficients $a \in L^\infty(\Omega)$ satisfying, in particular, the multiplier condition \eqref{Multiplier-Condition-Plate} (the detailed proof is left to the reader as this is obvious):
 \begin{proposition}
 	\label{Prop-Obs-Up-To-Comp}
 	Under the assumption of Theorem \ref{MainThmOBS-Plates}, the observability type estimates \eqref{Obs-Plates-Distributed-smooth} and \eqref{Obs-Plates-Dis-smooth-large-time} hold true, and the observability type estimate \eqref{Obs-Plates-Boundary-smooth}, \eqref{Obs-Plates-Boundary-smooth-large-time} hold true as well if we further assume that $a $ is Lipschitz in a neighborhood of $\Gamma_0$. 
 \end{proposition}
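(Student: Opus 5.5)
The plan is to argue exactly as in the proofs of Theorems \ref{MainThmOBS} and \ref{MainThmOBS-distributed}: regularize the coefficient, apply the uniform smooth estimates, and pass to the limit.

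\emph{Approximation.} Fix $a$ as in Theorem \ref{MainThmOBS-Plates}, and set $a_n := a_\varepsilon$ with $\varepsilon = 2^{-n}$, built by formula \eqref{Def-a-eps} with $a$ in place of $A$. By Proposition \ref{Prop-Approx-a} and Remark \ref{Rk-Approx-For-Plates}, for $n \geq n_0$ the $a_n$ have the following properties:
\begin{itemize}
\item they are Lipschitz on $\Omega$;
\item they satisfy $1/\alpha \leq a_n \leq \alpha$;
\item they converge to $a$ in $L^1(\Omega)$;
\item they satisfy $x\cdot\nabla a_n - \beta a_n \leq 0$ pointwise on $\overline\Omega$;
\item if $a$ is Lipschitz near $\Gamma_0$, they are uniformly bounded in $W^{1,\infty}(\mathcal V_1)$ for a neighborhood $\mathcal V_1$ of $\Gamma_0$.
\end{itemize}
Passing to a subsequence, $L^1$ convergence also gives $a_n \to a$ almost everywhere. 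Proposition \ref{Prop-Plate-Multiplier-Smooth} is stated for $a\in\mathscr C^1(\overline\Omega)$, while $a_n$ is only $\mathscr C^1(\overline\Omega\setminus\{0\})\cap W^{1,\infty}$. Its proof, however, only uses the multiplier identity \eqref{Plate-Mult-Identity}, which remains valid for $W^{1,\infty}$ coefficients with $x\cdot\nabla a_n$ defined a.e. Near $0$, $a$ is Lipschitz (or $0\notin\overline{\Omega_0}$), so this is harmless. Hence \eqref{Obs-Plates-Boundary-smooth}--\eqref{Obs-Plates-Dis-smooth-large-time} hold for each $y_n$ (the solution of \eqref{Plate-eq-a-n}) with constants depending only on $\alpha,\beta,T,T_0,T_1$ and the geometry, hence uniform in $n$.

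\emph{Passing to the limit.} Fix $(y_0,y_1)\in H^2_0(\Omega)\times L^2(\Omega)$. The left-hand sides $\int_\Omega (a_n|\Delta y_0|^2+|y_1|^2)\,\ud x$ converge by dominated convergence. By Theorem \ref{Thm-Regularization-Plate}, $y_n\to y$ strongly in $L^2(0,T;H^2_0(\Omega))\cap H^1(0,T;L^2(\Omega))$. This gives convergence of the distributed terms $\int_0^T\int_{\omega\cap\Omega}(|\partial_t y_n|^2+|\nabla y_n|^2+|y_n|^2)$.

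For the pointwise-in-time terms $\|\nabla y_n(t)\|_{L^2}$ at $t\in\{0,T,T_0,T_1\}$, I need convergence at fixed times. At $t=0$ it is trivial. Otherwise, I would reproduce the argument of Theorem \ref{Thm-Regularization}: the $H^{-2}$-type energy of $z_n=y_n-y$ (via $w_n=(\Delta(a_n\Delta\cdot))^{-1}z_n$) gives $\sup_t\|z_n(t)\|_{L^2}\to0$. Together with the uniform $H^2_0$ bound, interpolation then yields $\sup_t\|\nabla z_n(t)\|_{L^2}\to0$. This step is the main technical point, since Theorem \ref{Thm-Regularization-Plate} as stated only gives $L^2$-in-time convergence. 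An alternative is to notice that $y_n(t)\rightharpoonup y(t)$ weakly in $H^2_0$ and use compactness of $H^2_0\hookrightarrow H^1_0$.

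For the boundary estimates, assuming $a$ is Lipschitz near $\Gamma_0$, the second part of Theorem \ref{Thm-Regularization-Plate} gives $\Delta y_n|_{\Gamma_0}\to\Delta y|_{\Gamma_0}$ in $L^2((0,T)\times\Gamma_0)$. Meanwhile $a_n\to a$ uniformly on $\Gamma_0$, since the $a_n$ are uniformly Lipschitz there. So $\int_0^T\int_{\Gamma_0}a_n|\Delta y_n|^2 \to \int_0^T\int_{\Gamma_0}a|\Delta y|^2$.

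Letting $n\to\infty$ in each uniform inequality yields \eqref{Obs-Plates-Boundary-smooth}--\eqref{Obs-Plates-Dis-smooth-large-time} for $y$. The large-time versions follow because the threshold on $T$ is uniform in $n$.
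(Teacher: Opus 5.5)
Your proposal is correct and follows the route the paper intends (the paper leaves this proof to the reader as ``obvious''): regularize $a$ via Proposition \ref{Prop-Approx-a} and Remark \ref{Rk-Approx-For-Plates}, apply Proposition \ref{Prop-Plate-Multiplier-Smooth} with constants uniform in $n$, and pass to the limit using Theorem \ref{Thm-Regularization-Plate}. Your extra care fills in details the paper leaves implicit: the fixed-time terms $\|\nabla y_n(t)\|_{L^2(\Omega)}$, which Theorem \ref{Thm-Regularization-Plate} as stated does not cover and which you handle either by the $H^{-2}$-type energy plus $\|\nabla z\|_{L^2}^2 \leq \|z\|_{L^2}\|\Delta z\|_{L^2}$ or by compactness of $H^2_0(\Omega)\hookrightarrow H^1_0(\Omega)$; the fact that $a_n$ is only Lipschitz at $0$; and the weight $a_n$ in the boundary term.
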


 \subsubsection{A uniqueness result for plates with rough coefficients} 
 
In order to deduce Theorem \ref{MainThmOBS-Plates}, we will need a uniqueness result:
\begin{proposition}
	\label{Prop-Uniqueness-Plate}
	Under the assumption of Theorem \ref{MainThmOBS-Plates}, if $y$ is a solution of \eqref{Plate-Eq} with initial datum in $H^2_0(\Omega) \times L^2(\Omega)$ satisfying  either $y= 0$ in $(0,T) \times (\omega\cap \Omega)$, or $\Delta y = 0$ in $(0,T) \times \Gamma_0$ and $a$ Lipschitz in a neighborhood of $\Gamma_0$, then $y$ vanishes everywhere in $(0,T) \times \Omega$. 
\end{proposition}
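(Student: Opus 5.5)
Since Proposition \ref{Prop-Obs-Up-To-Comp} gives observability estimates only with compact losses, and observability \emph{without} loss holds for $T$ large, the natural route is the same as in the proof of Theorem \ref{MainThmOBS-distributed}: use the large-time estimates \eqref{Obs-Plates-Dis-smooth-large-time} (respectively \eqref{Obs-Plates-Boundary-smooth-large-time}), which Proposition \ref{Prop-Obs-Up-To-Comp} transfers to rough $a$. Let $T^*$ denote the time beyond which these hold. The difficulty is that the Proposition is stated for arbitrary $T>0$, possibly smaller than $T^*$. So the plan has two parts: first extend the vanishing in time, then conclude.

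\textbf{Step 1 (large $T$).} Assume first $T>T^*$. If $y=0$ in $(0,T)\times(\omega\cap\Omega)$, then $\partial_t y$, $\nabla y$ and $y$ all vanish there. The right-hand side of \eqref{Obs-Plates-Dis-smooth-large-time} is therefore zero, so the energy vanishes and $y\equiv 0$. The boundary case is identical, using \eqref{Obs-Plates-Boundary-smooth-large-time} with $\Delta y=0$ on $(0,T)\times\Gamma_0$. No time-difference argument is even needed here, since the observation is of $y$ itself.

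\textbf{Step 2 (small $T$): reduce to large time.} This is the main obstacle: we know vanishing only on a short window, and the plate equation has infinite speed of propagation, so the finite-time window cannot be extended directly. I would exploit the compact-loss estimate valid for every $T$ instead of large-time observability. Consider the space
\[
N_T=\{(y_0,y_1)\in H^2_0(\Omega)\times L^2(\Omega):\ y=0 \text{ in } (0,T)\times(\omega\cap\Omega)\}
\]
(respectively $\Delta y=0$ on $(0,T)\times\Gamma_0$). By \eqref{Obs-Plates-Distributed-smooth}, for data in $N_T$ the energy is bounded by $C(\|\nabla y(T_0)\|^2+\|\nabla y(T_1)\|^2)$. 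Since $H^2_0\hookrightarrow H^1_0$ is compact, the unit ball of $N_T$ is compact, and hence $N_T$ is finite dimensional.

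Next I would show that $N_T$ is invariant under the generator. Restricting to $N_{T'}$ for $T'<T$ (a nondecreasing family of finite-dimensional spaces, so $N_{T'}$ stabilizes for $T'$ close to $T$), the difference quotients $(y(\cdot+\tau)-y)/\tau$ lie in $N_{T'}$. By finite dimensionality they converge, so $\partial_t y$ corresponds to data in $N_{T'}$. Thus the generator $\mathcal A$ of the plate semigroup maps the finite-dimensional space $N_{T'}$ into itself and has an eigenvector if $N_{T'}\neq\{0\}$. Over $\C$, this gives $\Delta(a\Delta\phi)=\mu\phi$ with $\phi\in H^2_0$ and $\phi=0$ in $\omega\cap\Omega$ (resp. $\Delta\phi=0$ on $\Gamma_0$).

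\textbf{Step 3 (the eigenfunction).} The function $y(t,x)=e^{i\sqrt{\mu}t}\phi(x)$ solves \eqref{Plate-Eq} and satisfies the vanishing hypothesis on $(0,T'')$ for \emph{every} $T''$. Step 1 with $T''>T^*$ therefore forces $\phi=0$. Hence $N_{T'}=\{0\}$, so $y$ vanishes on $(0,T')\times\Omega$ for all $T'<T$, and thus on $(0,T)\times\Omega$. The delicate points to check are the stabilization of $N_{T'}$ and the passage to the limit in the difference quotients. Both follow from finite dimensionality, as in the compactness–uniqueness argument of \cite[Section 3.2]{Dehman-Erv-2016}.
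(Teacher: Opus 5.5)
Your proposal is correct and follows the same compactness--uniqueness scheme as the paper: the invisible space $\mathcal{N}(T)$ is finite-dimensional by the compact-loss estimate, it is stable under $\mathfrak{A}$ via difference quotients, and an eigenvector is then ruled out by the large-time observability estimate. The only differences are technical. You get compactness directly from $H^2_0(\Omega)\hookrightarrow H^1_0(\Omega)$ at times $T_0,T_1$, and invariance from the stabilization of the nested finite-dimensional spaces $N_{T'}$, which are closed in the $L^2\times H^{-2}$ topology; the paper instead uses the conserved weak energy $E_{-1}$ and the interpolation $\|\nabla y\|^2\le\|y\|\,\|\Delta y\|$ to bound the difference quotients uniformly in $H^2_0(\Omega)\times L^2(\Omega)$.
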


\begin{proof}
	Before going into the proof, it is interesting to point out that solutions $y$ of \eqref{Plate-Eq} with initial data in $H^2_0(\Omega) \times L^2(\Omega)$ not only have constant $H^2_0(\Omega) \times L^2(\Omega)$ energy (recall the definition of the energy in \eqref{Energy-Plate}), but also a weaker one, corresponding to a $L^2(\Omega) \times H^{-2}(\Omega)$ energy. This can be seen from a functional analysis point of view as the plate equation is of the form $\partial_{tt} y + \mathcal{A} y = 0$ for a positive definite self-adjoint operator on a Hilbert space $H$, where $\mathcal{A}$ is given by the operator $\Delta ( a \Delta \cdot)$ with domain $H^2_0(\Omega)$, and thus not only its natural energy $t \mapsto \| \mathcal{A}^{1/2} y(t) \|_{H}^2 + \| \partial_t y(t)\|_H^2$ is constant, but also weaker ones, such as $t \mapsto \| y(t) \|_{H}^2 + \| \partial_t \mathcal{A}^{-1/2} y(t)\|_H^2$. Here, as we should be careful with the metric, we derive it directly using PDE techniques similar as what we did in \eqref{Def-w-n} for the wave equation. More precisely, for a solution $y$ of \eqref{Plate-Eq} with initial data in $H^2_0(\Omega) \times L^2(\Omega)$, we introduce a function $w$ defined by, for all $t \in [0,T]$,  
	$$
		\left\{ \begin{array}{ll}
		\Delta (a \Delta w(t) ) = y(t) &\text{in } \Omega,
			\\
		w(t) = \partial_\nu w(t) = 0 & \text{on } \partial \Omega, 
		\end{array}\right.
	$$ 
	and we multiply the equation \eqref{Plate-Eq} by $\partial_t w$. This yields that the weak energy 
	\begin{equation}
		\label{Energy--1-plate}
		E_{-1}(t) = \frac{1}{2} \int_\Omega \left( | y(t)|^2 + a |\Delta \partial_t w(t)|^2 \right) \, \ud x
	\end{equation}
	is independent of the time variable $t \in [0,T]$.
	\smallskip
	
	Let us now go to the proof of Proposition \ref{Prop-Uniqueness-Plate}. We will only present the details in the case $y = 0$ in $(0,T) \times (\omega \cap \Omega)$, as the other one can be proved using exactly the same arguments. 
	
	We introduce the set 
	$$
		\mathcal{N}(T) = \{ (y_0, y_1) \in H^2_0(\Omega) \times L^2(\Omega), \, \text{ the solution $y$ of \eqref{Plate-Eq} satisfies } y = 0 \text{ in }(0,T)\times (\omega \cap \Omega)\}.
	$$
	Using the observability estimate \eqref{Obs-Plates-Distributed-smooth} with $T_0 = T/3$ and $T_1 = 2T/3$ (recall Proposition \ref{Prop-Obs-Up-To-Comp}), for all $(y_0, y_1) \in H^2_0(\Omega) \times L^2(\Omega)$, 
	\begin{equation}
		\label{Toward-Compactness-0}
		\int_\Omega ( a |\Delta y_0|^2 + |y_1|^2 ) \ud x 
		\leq 
		C_\omega^2 \left( 
		\| \nabla y(T/3)\|_{L^2(\Omega)}^2 + \| \nabla y(2T/3)\|_{L^2(\Omega)}^2\right).
	\end{equation}
	But for all $t \in [0,T]$, 
	$$
		\|\nabla y(t)\|_{L^2(\Omega)}^2
		\leq 
		\| y(t) \|_{L^2(\Omega)} \|\Delta y\|_{L^2(\Omega)}
		\leq
		\sqrt{\alpha} \| y(t) \|_{L^2(\Omega)} \|\sqrt{a} \Delta y\|_{L^2(\Omega)}. 
	$$
	Using thus the fact that the energies %$E(t) = E(0)$ 
    $E$ in \eqref{Energy-Plate} and $E_{-1}$ 
    %(t) = E_{-1}(0)$ 
    in \eqref{Energy--1-plate} are preserved, we obtain
	$$	
		\|\nabla y(t)\|_{L^2(\Omega)}^2
		\leq 
		C (E(0))^{1/2} (E_{-1}(0))^{1/2}. 
	$$
	Therefore, the estimate \eqref{Toward-Compactness-0} implies that for all $(y_0, y_1) \in \mathcal{N}(T)$, $E(0) \leq C E_{-1}(0)$, that is 
	\begin{equation}
		\label{Toward-Compactness-1}
		\frac{1}{\alpha} \int_\Omega (  |\Delta y_0|^2 + |y_1|^2 ) \ud x 
		\leq
		\int_\Omega ( a |\Delta y_0|^2 + |y_1|^2 ) \ud x 
		\leq 
		C 
		\int_\Omega ( |y_0|^2 + a |\Delta w_1|^2 ) \ud x
		\leq 
		C \alpha 
		\int_\Omega ( |y_0|^2 +  |\Delta w_1|^2 ) \ud x, 
	\end{equation}
	where $w_1$ is the solution of 
	\begin{equation}
	    \label{Def-w-1-plate}
		\left\{ \begin{array}{ll}
			\Delta (a \Delta w_1 ) = y_1 &\text{in } \Omega,
			\\
			w_1 = \partial_\nu w_1 =0 & \text{on } \partial \Omega. 
		\end{array}\right.
	\end{equation}
	The estimate \eqref{Toward-Compactness-1} implies that the set $\mathcal{N}(T)$ is closed for the topology given by the norm given by $$\|(y_0, y_1)\|_{\mathcal{N}(T)}^2 = \| y_0\|_{L^2(\Omega)}^2 + \|\Delta w_1 \|_{L^2(\Omega) }^2,$$ with $w_1$ given as above by \eqref{Def-w-1-plate}. 
	
	Furthermore, it implies that the ball of $\mathcal{N}(T)$ endowed with this norm is compact. Indeed, let $(y_{0,n}, y_{1,n})_{n \in \N}$ be a sequence of $\mathcal{N}(T)$ such that $\| (y_{0,n}, y_{1,n}) \|_{\mathcal{N}(T)} = 1$ for all $n \in \N$. Then $(y_{0,n}, w_{1,n})_{n \in \N}$ are bounded in $L^2(\Omega) \times H^2_0(\Omega)$. From \eqref{Toward-Compactness-1}, we get that $(y_{0,n}, y_{1,n})_{n \in \N}$ is bounded in $H^2_0(\Omega) \times L^2(\Omega)$. Thus, up to a subsequence still denoted the same for simplicity, we have that  the sequence $(y_{1,n})_{n \in \N}$ weakly converges to some $y_1 \in L^2(\Omega)$ and, from the compactness of the embedding of $H^2_0(\Omega) $ in $L^2(\Omega)$, $(y_{0,n}, w_{1,n})_{n \in\N}$ converges to $(y_0,w_1)$ strongly in $L^2(\Omega) \times L^2(\Omega)$. Since 
	$$ 
		\int_\Omega a |\Delta (w_{1,n} - w_{1}) |^2 \, \ud x 
		= 
		\int_\Omega  (w_{1,n} -w_1) \Delta (a \Delta (w_{1,n}-w_1) ) \, \ud x 
		= 
		\int_\Omega  (w_{1,n}-w_1) ( y_{1,n} - y_1) \, \ud x,
	$$
	we deduce that the sequence $(y_{0,n}, y_{1,n})_{n \in \N}$ is strongly convergent in $\mathcal{N}(T)$. Consequently, the unit ball of $\mathcal{N}(T)$ is compact, and thus $\mathcal{N}(T)$ is finite dimensional. 

	Let us now take $(y_0, y_1) \in \mathcal{N}(T)$. Remark then that $(y_0, y_1) \in \mathcal{N}(T/2)$. We can then remark that for $\tau \in (0,T/2)$, $(z_{\tau,0}, z_{\tau,1} ) = ((y(\tau) - y_0)/\tau, (\partial_t y(\tau) - y_1)/\tau)$ is a sequence of data in $\mathcal{N}(T/2)$, since the corresponding solution of \eqref{Plate-Eq} is $z_\tau = (y(\cdot + \tau ) - y)/\tau$. From \eqref{Toward-Compactness-1} applied for elements of $\mathcal{N}(T/2)$, we get
	\begin{equation}
		\label{Toward-Compactness-2}
		 \int_\Omega (  |\Delta z_{\tau,0}|^2  + |z_{\tau,1}|^2 ) \ud x 
		\leq 
		C 
		\int_\Omega ( |z_{\tau,0}|^2 + a |\Delta w_{\tau}|^2 ) \ud x, 
	\end{equation}
	where $w_{\tau}$ is the solution of 
	$$
		\left\{ \begin{array}{ll}
			\Delta (a \Delta w_\tau ) = z_{\tau,1} &\text{in } \Omega,
			\\
			w_\tau = \partial_\nu w_\tau =0 & \text{on } \partial \Omega.
		\end{array}\right.
	$$
	Using that 
	$$
		z_{\tau,0} = \int_0^1 \partial_t y(s \tau) \, \ud s
		\text{ and } 
		z_{\tau,1} = \int_0^1 \partial_{tt} y( s \tau ) \, \ud s = - \Delta \left(a \Delta \left( \int_0^1 y(s\tau) \, \ud s\right)\right), 
	$$
	we obtain $w_{\tau} = -\int_0^1 y(s \tau)\, \ud s$ and consequently,
	$$
		\int_\Omega ( |z_{\tau,0}|^2 + a |\Delta w_{\tau}|^2 ) \ud x
		\leq 
		C \int_\Omega ( a| \Delta  y_0|^2 + |y_1|^2) \, \ud x, 
	$$
	for $C$ independent of $\tau$. From \eqref{Toward-Compactness-2}, the family $(z_{\tau,0}, z_{\tau,1})_{\tau \in (0, T/2)}$ is thus bounded in $H^2_0(\Omega) \times L^2(\Omega)$, while it is clear that it weakly converges in $L^2(\Omega) \times H^{-2}(\Omega)$ to $(y_1, -\mathcal{A} y_0)$ as $\tau \to 0$. Hence passing to the limit $\tau \to 0$, we get that $(y_1, -\mathcal{A} y_0 )$ belongs to $H^2_0(\Omega) \times L^2(\Omega)$.
	In other words, the operator 
	$$
		\mathfrak{A} = \left( \begin{array}{ll} 0 & Id \\ - \mathcal{A} & 0 \end{array}\right)
	$$
	acts on $\mathcal{N}(T)$. Since it corresponds to the time differentiation of the solutions of \eqref{Plate-Eq}, $\mathfrak{A}$ maps $\mathcal{N}(T)$ into itself. 
	
	Since $\mathcal{N}(T)$ is finite dimensional, if it is non-trivial, $\mathfrak{A}$ has a non-trivial eigenvector $(y_0, y_1)$ in $\mathcal{N}(T)$ such that the corresponding solution $y$ of \eqref{Plate-Eq} satisfies, for some $\lambda \in \C$, $y(t) = y_0 e^{\lambda t}$ for all $t \in [0,T]$ and $y = 0$ in $(0,T) \times (\omega\cap \Omega)$. If such solution exists, then $y(t) = y_0 e^{\lambda t}$ is a solution of \eqref{Plate-Eq} for all times, that is for $t \in [0, \infty)$, which furthermore satisfies $y = 0$ in $(0,\infty) \times (\omega \cap \Omega)$. From Proposition \ref{Prop-Obs-Up-To-Comp} and the observability estimate \eqref{Obs-Plates-Dis-smooth-large-time} applied to $a \in L^\infty(\Omega)$ (recall that $a$ satisfies the assumptions of Proposition \ref{Prop-Obs-Up-To-Comp}, this would entail $y = 0$ everywhere, that is $y_0 = 0$, and we have reached a contradiction. 
	
	In other words, we have proved that for all $T>0$, $\mathcal{N}(T) = \{0\}$.
\end{proof}

As a corollary, we have:

\begin{corollary}
	\label{Cor-Uniqueness-Plate}
	Under the assumption of Theorem \ref{MainThmOBS-Plates}, if $y$ is a solution of \eqref{Plate-Eq} with initial datum in $H^2_0(\Omega) \times L^2(\Omega)$ satisfying  $\partial_t y= 0$ in $(0,T) \times (\omega \cap \Omega)$, we also have that $y = 0$ in $(0,T) \times \Omega$. 
\end{corollary}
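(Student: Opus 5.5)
The plan is to reduce Corollary \ref{Cor-Uniqueness-Plate} to Proposition \ref{Prop-Uniqueness-Plate} by replacing $y$ with its time derivative, which vanishes on $\omega$. Let $y$ solve \eqref{Plate-Eq} with data $(y_0,y_1) \in H^2_0(\Omega)\times L^2(\Omega)$, and suppose $\partial_t y = 0$ in $(0,T)\times(\omega\cap\Omega)$.

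First, for $\tau \in (0,T/2)$, set $z_\tau = (y(\cdot+\tau)-y)/\tau$ on $(0,T/2)$. By time invariance, $z_\tau$ solves \eqref{Plate-Eq} with data $\bigl((y(\tau)-y_0)/\tau,\ (\partial_t y(\tau)-y_1)/\tau\bigr) \in H^2_0(\Omega)\times L^2(\Omega)$. Moreover,
\[
z_\tau(t,x) = \frac{1}{\tau}\int_t^{t+\tau} \partial_t y(s,x)\,\ud s = 0 \quad \text{in } (0,T/2)\times(\omega\cap\Omega).
\]
Proposition \ref{Prop-Uniqueness-Plate}, applied on the time interval $(0,T/2)$ (it holds for every $T>0$), then gives $z_\tau \equiv 0$ in $(0,T/2)\times\Omega$. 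Letting $\tau\to 0$, we have $z_\tau \to \partial_t y$ in $\mathscr{D}'$ (indeed in $L^2$), so $\partial_t y = 0$ in $(0,T/2)\times\Omega$. Hence $y(t)=y_0$ for $t\in[0,T/2]$. Since the time shift $\tau$ can be taken in $(0,T-T')$ for any $T'<T$, the same argument gives $y(t)=y_0$ on $[0,T)$, and by continuity on $[0,T]$.

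Next, since $y$ is constant in time, the equation gives $\Delta(a\Delta y_0) = -\partial_{tt}y = 0$ in $H^{-2}(\Omega)$. Testing against $y_0 \in H^2_0(\Omega)$ (the natural duality pairing of the weak formulation) yields
\[
\int_\Omega a|\Delta y_0|^2\,\ud x = 0,
\]
so $\Delta y_0 = 0$. Since $y_0 \in H^2_0(\Omega)$, we conclude $y_0=0$. Therefore $y\equiv 0$ in $(0,T)\times\Omega$.

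The only delicate point is the first step: we must check that $z_\tau$ has finite-energy data and vanishes on $\omega$. Both facts are immediate from the regularity $y\in\mathscr{C}^0([0,T];H^2_0(\Omega))\cap\mathscr{C}^1([0,T];L^2(\Omega))$ and the integral representation of $z_\tau$ above. This mirrors the argument used in the proof of Theorem \ref{MainThmOBS-distributed}.
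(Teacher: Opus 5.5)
Your proposal is correct and follows essentially the same route as the paper: you form time-difference quotients $z_\tau$ that vanish on $\omega$, apply Proposition \ref{Prop-Uniqueness-Plate} on a shorter time horizon, let $\tau\to 0$ to get $\partial_t y=0$, and then test $\Delta(a\Delta y)=0$ against $y$. The only cosmetic difference is the last step: you rerun the argument on $(0,T')$ for every $T'<T$ to get $y\equiv y_0$ on $[0,T)$, whereas the paper stops at $(0,T/2)$ and uses conservation of the energy to extend $y=0$ to the whole interval.
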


\begin{proof}	
	Let $y$ be a solution of \eqref{Plate-Eq} with initial datum in $H^2_0(\Omega) \times L^2(\Omega)$ satisfying  $\partial_t y= 0$ in $(0,T) \times (\omega \cap \Omega)$. Then for all $\tau \in (0,T/2)$, the function $z_\tau$ defined by $z_\tau (t) = y(t+\tau) - y(t)$ for $t \in (0,T/2)$ is a solution of \eqref{Plate-Eq} with initial datum in $H^2_0(\Omega) \times L^2(\Omega)$ satisfying $z_\tau = 0$ in $(0,T/2) \times (\omega\cap \Omega)$. Using then Proposition \ref{Prop-Uniqueness-Plate} (in the time horizon $T/2$) we have that for all $\tau \in (0,T/2)$, $z_\tau = 0$ in $(0, T/2) \times \Omega$. In particular, passing to the limit $\tau \to 0$, we get $\partial_t y = 0$ in $(0,T/2) \times \Omega$, so that the equation \eqref{Plate-Eq} implies that for all $t \in (0,T/2)$, $\Delta (a \Delta y(t)) = 0$ in $\Omega$ and $y (t)= \partial_\nu y(t)=0$ on $\partial\Omega$. Multiplying this equation by $y(t)$, we immediately deduce that $y(t) = 0$ in $(0,T/2)\times \Omega$. Since we also have that $\partial_t y = 0$ in $(0,T/2) \times \Omega$, it means that the energy at any time $t \in (0,T/2)$ is zero. Since this is a constant quantity, the solution $y$ vanishes everywhere.
\end{proof}

\subsubsection{Proof of Theorem \ref{MainThmOBS-Plates}: a compactness argument} 
 \begin{proof}[Proof of Theorem \ref{MainThmOBS-Plates}.]
 	We simply argue by contradiction as before, starting with the case of an observation on $\Gamma_0$, and the proof of the observability property \eqref{Obs-Plate-Gamma}.
	
	We assume that there exists a sequence $(y_{0,n}, y_{1,n}) \in H^2_0(\Omega) \times L^2(\Omega)$ such that
	\begin{equation}
		\label{Contradiction-Ass-Boundary}
		\forall n \in \N, \quad \int_\Omega \left( a |\Delta y_{0,n}|^2 + |y_{1,n}|^2 \right) \, \ud x  = 1, 
		\text{ and } 
		\lim_{n \to \infty} \int_0^T \int_{\Gamma_0} |\Delta y_n|^2 \, \ud \sigma\,  \ud t = 0, 
	\end{equation}
	where $y_n$ is the solution of \eqref{Plate-Eq} with initial datum $(y_{0,n}, y_{1,n})$. 
	
	Then there exists $(y_0, y_1) \in H^2_0(\Omega) \times L^2(\Omega)$ such that the sequence $(y_{0,n} , y_{1,n})_{n \in \N}$ weakly converges (up to a subsequence still denoted the same) to $(y_0, y_1)$ in $H^2_0(\Omega) \times L^2(\Omega)$. Besides, since the sequence $(y_n)_{n \in \N}$ is bounded in $L^\infty(0,T; H^2_0(\Omega)) \times W^{1,\infty}(0,T; L^2(\Omega))$, it also weakly-$\star$ converges to the solution $y$ of \eqref{Plate-Eq} with initial data $(y_0, y_1)$ in $L^\infty(0,T; H^2_0(\Omega)) \times W^{1,\infty}(0,T; L^2(\Omega))$, and the assumption \eqref{Contradiction-Ass-Boundary} gives that $\Delta y = 0$ on $(0,T) \times \Gamma_0$. It follows from Proposition \ref{Prop-Uniqueness-Plate} that $y$ vanishes in $(0,T) \times \Omega$ and $(y_0, y_1) = (0,0)$.
	
	Now, the sequence $(y_n(T))_{n \in \N}$ weakly converges to $y(T) = 0$ in $H^2_0(\Omega)$. Using the compactness of the embedding of $H^2_0(\Omega)$ into $H^1_0(\Omega)$, we deduce that the sequences $(y_{0,n})_{n \in \N}$ and $(y_n(T))_{n \in \N}$ strongly converges to $y_0 = 0 $ and $y(T)= 0$ in $H^1_0(\Omega)$. The estimate \eqref{Obs-Plates-Boundary-smooth} then contradicts the assumption \eqref{Contradiction-Ass-Boundary}. This concludes the proof of the observability property \eqref{Obs-Plate-Gamma}.
	\smallskip
	
	The proof of the observability property \eqref{Obs-Plate-Dis} follows similarly by contradiction, using the estimate \eqref{Obs-Plates-Distributed-smooth} obtained from Proposition \ref{Prop-Obs-Up-To-Comp} and the uniqueness result from Corollary \ref{Cor-Uniqueness-Plate}, and the compactness embeddings from $H^2_0(\Omega)$ in $L^2(\Omega)$ (Rellich theorem) and from $L^2(0,T; H^2_0(\Omega)) \cap H^1(0,T; L^2(\Omega))$ in $L^2(0,T; H^1_0(\Omega))$ (Aubin-Lions theorem). Details are left to the reader.	%
 \end{proof}

\subsubsection{Some comments on the multiplier type condition for the plate equation}

In this paragraph, we would like to underline that the multiplier condition \eqref{Multiplier-Condition-Plate} is also quite natural from the point of view of microlocal analysis. 

Indeed, to understand the propagation of the wave front sets (or microlocal defect measures) for operators of  the form $\partial_{tt} + \Delta( a \Delta \cdot)$ away from the boundary and for smooth coefficients $a$, we should consider the principal symbol of the operator, namely $p(x,\xi)=a(x)|\xi|^{4}$ and its Hamiltonian flow given in this case by the differential system:
\begin{equation*}
    \frac{dx}{dt}=-4 |\xi|^{2}\xi a(x),\qquad
    \frac{d\xi}{dt}=\nabla_x a(x)\, |\xi|^{4}, 
\end{equation*}
when the initial datum $(x_0, \xi_0)$ satisfies $a(x_0) |\xi_0|^4 = 1$.

It follows from the equations that 
\begin{equation}\label{eq:hf1}
\frac{d}{dt}\left(\frac{1}{2}|x|^2\right)=x\cdot\frac{dx}{dt}
= - 4 (x\cdot\xi)a(x)|\xi|^{2} = - 4 (x \cdot \xi) \sqrt{a(x)}
\end{equation}
as along bicharacteristics, $(x(t),\xi(t))$ satisfies $a(x(t)) |\xi(t)|^4 = 1$ for all $t$.

Now, we also have 
\begin{equation}\label{eq:hf2}
	\frac{d}{dt}(x\cdot\xi)=\frac{dx}{dt}\cdot\xi+x\cdot\frac{d\xi}{dt}
=|\xi|^{4}(-4 a(x)+x\cdot\nabla_x\, a(x)). 
\end{equation}
From \eqref{eq:hf1} and \eqref{eq:hf2} we obtain
\begin{equation*}
\frac{d}{dt}\left(\frac{1}{\sqrt{a(x)}} \frac{d}{dt}\left(\frac{|x|^2}{2}\right)\right)
= \frac{4}{a(x)} (4\,a(x)-x\cdot\nabla_x\, a(x)).
\end{equation*}
It follows that, if we are in $\R^d$, the condition \eqref{Multiplier-Condition-Plate} in fact guarantees that any trajectory $t \mapsto (x(t), \xi(t))$ satisfies that $t \mapsto |x(t)|^2$ will go to infinity as $t \to \infty$. 

Also, if there exists a sphere $\partial B (0,R)$ such that $x \cdot \nabla a = 4 a$ on $\partial B (0,R)$, then taking $x_0 \in \partial B (0,R)$ and $\xi_0 \in \R^d$ with $\xi_0 \cdot x_0 = 0$ and $|\xi_0| = a(x_0)^{-1/4}$, for all $t \geq 0$, $x(t) \cdot \xi(t) = 0$, and the trajectory described by the Hamiltonian flow thus stays on the sphere $\partial B(0,R)$. Although this does not necessarily imply that the plate equation \eqref{Plate-Eq} in a domain containing this sphere would be not observable, such ray would certainly require additional work to be dealt with, even in the context of smooth coefficients, using for instance second micro-localization techniques (see \cite{Anantharaman-Leautaud}).

\bibliography{bibliography.bib}

\end{document}